\documentclass[11pt,leqno]{amsart}
\usepackage{amsmath}
\usepackage{amsfonts}
\usepackage{amssymb}
\usepackage{graphicx}
\usepackage{hyperref}
\usepackage{xcolor} 
\usepackage{a4wide} 
\usepackage{mathrsfs} 
\usepackage{relsize} 

\usepackage{soul}

\theoremstyle{plain}
\newtheorem{theorem}{Theorem}

\newtheorem{corollary}[theorem]{Corollary}
\newtheorem{definition}[theorem]{Definition}
\newtheorem{lemma}[theorem]{Lemma}
\newtheorem{proposition}[theorem]{Proposition}

\theoremstyle{definition}
\newtheorem{example}[theorem]{Example}

\newtheorem{remark}[theorem]{Remark}

\newtheorem{theoremA}{Theorem}

\numberwithin{equation}{section}
\numberwithin{theorem}{section}

\newcommand{\metric}{\<\cdot,\cdot\>}

\newcommand{\vol}{\mathrm{vol}}

\renewcommand{\div}{\operatorname{div}}

\newcommand{\Hess}{\operatorname{Hess}}

\newcommand{\Lip}{\mathrm{Lip}}

\newcommand{\rr}{\mathbb{R}}
\renewcommand{\ss}{\mathbb{S}}

\newcommand{\sect}{\operatorname{Sect}}
\newcommand{\ric}{\operatorname{Ric}}

\newcommand{\supp}{\operatorname{supp}}

\newcommand{\<}{\langle}
\renewcommand{\>}{\rangle}
\newcommand{\dv}{\,d\mathrm{vol}}
\newcommand{\CAP}{\mathrm{Cap}}

\def\XXint#1#2#3{{\setbox0=\hbox{$#1{#2#3}{\int}$}
     \vcenter{\hbox{$#2#3$}}\kern-.5\wd0}}

\newcommand{\cut}{\mathrm{cut}}

\newcommand{\inj}{\mathrm{inj}}

\renewcommand{\a}{\alpha}
\renewcommand{\b}{\beta}

\renewcommand{\d}{\delta}

\newcommand{\g}{\gamma}
\renewcommand{\l}{\lambda}

\newcommand{\s}{\sigma}
\newcommand{\vp}{\varphi}

\newcommand{\CC}{\mathcal{C}}

\newcommand{\CG}{\mathcal{G}}

\newcommand{\A}{\mathscr{A}}

\newcommand{\D}{\mathscr{D}}
\newcommand{\E}{\mathscr{E}}
\renewcommand{\H}{\mathscr{H}}
\renewcommand{\L}{\mathscr{L}}

\newcommand{\Hm}[1]{\leavevmode{\marginpar{\tiny%
			$\hbox to 0mm{\hspace*{-0.5mm}$\leftarrow$\hss}%
			\vcenter{\vrule depth 0.1mm height 0.1mm width \the\marginparwidth}%
			\hbox to 0mm{\hss$\rightarrow$\hspace*{-0.5mm}}$\\\relax\raggedright
			#1}}}
\begin{document}

\title[Volume and parabolicity for non-symmetric operators]{Volume growth and parabolicity for non-symmetric operators}

\author{Leandro F. Pessoa}
\address{Universidade Federal do Piau\'i \\ Departamento de Matematica \\ 64049-550, Teresina, BRAZIL}
\email{leandropessoa@ufpi.edu.br}
\author{Stefano Pigola}
\address{Universit\`a degli Studi di Milano-Bicocca\\ Dipartimento di Matematica e Applicazioni \\ Via Cozzi 55, 20126 Milano - ITALY}
\email{stefano.pigola@unimib.it}
\author{Alberto G. Setti}
\address{Universit\`a dell'Insubria \\ Dipartimento di Scienza e Alta Tecnologia - Sezione di Matematica \\ via Valleggio 11\\
I-22100 Como, ITALY}
\email{alberto.setti@uninsubria.it}

\date{\today}

\begin{abstract}
The interplay between a weighted volume measure and its naturally associated weighted Laplace operator is fundamental to the geometric analysis of weighted structures governed by gradient vector fields. Beyond the gradient setting, however, the theory has remained considerably less developed, largely because this natural correspondence is lost. In this paper, we identify invariant measures as the appropriate framework for extending the theory to general drifted Laplace operators. This approach yields a systematic treatment of parabolicity in the non-gradient setting, reveals its connection with volume growth, and leads to new volume finiteness results for non-gradient geometric structures.
\end{abstract}

\maketitle

\section{Introduction}

 Let $(M,\metric,\nu)$ be a smooth metric measure space, where $(M,\metric)$ is an $m$-dimensional Riemannian manifold and $\nu$ is a positive Radon measure. Many important examples arising in geometry are modeled by measures induced by smooth densities, namely $\nu = \vol_f \doteq e^{-f}\vol$. This class, called weighted manifolds, has been extensively studied from both the analytic and geometric viewpoints (cf. e.g. \cite{AMR,BPS,Cas,CaMa,CGY,Gr3,Gr4,IRS,Lo,Mo1,Mo2,MW1, MW2,PeWi,PRRS,WW} just to touch some aspects of the immense literature on the subject), and contains important geometric structures such as gradient Ricci solitons \cite{Ha}, and self-shrinkers of the mean curvature flow (MCF), \cite{CM}. 

From a PDE perspective, the weighted measure $\vol_f$ naturally gives rise to the second-order drifted operator $\Delta_f \doteq \Delta - \langle \nabla f, \nabla\,\rangle$, called $f$-Laplace-Beltrami operator, which is symmetric with respect to $\vol_f$, and reveals deep geometric and potential-theoretic properties of the weighted manifold. For instance, by now standard weighted comparison arguments show that every complete gradient shrinking Ricci soliton has finite weighted volume \cite{WW}. In the same vein, a beautiful result of Q. Ding and Y.L. Xin, \cite{DX} states that every properly immersed self-shrinker has finite weighted volume. A common viewpoint to investigate the finiteness of the weighted volumes of gradient shrinking Ricci solitons and self-shrinkers of the MCF was established by X. Cheng and D. Zhou, \cite{CZ}.

The interaction between the $f$-Laplacian and the weighted measure $\vol_f$ provides a natural framework for the geometric analysis of a weighted manifold, underlying the geometric aspects of the theory through the Bakry--Émery Ricci tensors, comparison theory, and its potential-theoretic aspects, including parabolicity via volume growth criteria, capacity estimates, global Green formulas, and maximum principles (see again \cite{AMR,Gr3,WW}).

More generally, when dealing with geometric structures outside the gradient setting, like non-gradient Ricci solitons, one is naturally led to consider a general drifted operator of the form $\Delta_X u \doteq \Delta u - \langle X,\nabla u\rangle$, called $X$-Laplacian, where $X$ is an arbitrary vector field. In this broader context, much less is known, mainly due to the fact that the $X$-Laplacian does not arise from a quadratic form with respect to a natural distinguished measure.

In this paper, we borrow a probability perspective and look for a weighted measure naturally associated with the given drifted Laplacian. A major role in understanding how much of the previous picture can be extended to this general setting is played by the concept of invariant measures. As we shall see, this change of perspective is crucial in discovering new phenomena in the potential theory of the $X$-Laplacian. In particular, we address the fundamental question of whether there exist canonical measures naturally associated with the $X$-Laplacian and, if so, to what extent the corresponding notion of volume is still related to parabolicity in this broader setting.

\subsection{Main results} We briefly introduce some notation before stating the main results. The reader can find the missing definitions in the subsequent sections.

\subsection{Finiteness results for invariant measures}
Let $M_f\doteq (M,\metric,\vol_f)$ be a weighted manifold, where $\vol_f = e^{-f}\vol$ is the weighted volume measure that is absolutely continuous with respect to the Riemannian measure $\vol$. The function $f \in C^\infty(M)$ is referred to as the potential function of the weighted manifold. The $f$-Laplacian operator acts on $C^2$-functions by
\[
\Delta_f u = \Delta u - \< \nabla f , \nabla u\>.
\]
Equivalently, letting $\div_fX = e^{f} \div(e^{-f}X)$ be the  weighted divergence, we have
\[
\Delta_f u = \div_f(\nabla u).
\]
The $f$-divergence theorem
\[
\int_\Omega \div_f X \dv_f = \int_{\partial \Omega}\< X, \eta\> d\s_f
\]
valid for a smooth vector field $X$ defined on a bounded open subset $\Omega \subset M$ with smooth boundary and outward unit normal vector field $\eta$, readily yields that $\Delta_f$ is symmetric on $C^\infty_c(M)\subset L^2(\vol_f)$ (and, in fact, essentially self-adjoint if $M$ is complete). 

The geometric analysis of $M_f$ is related to a family of tensors collectively referred to as Bakry-Emery Ricci tensors. These are defined by
\[
\ric_{f,n} = \ric + \Hess f - \frac{1}{n-m}df \otimes df,
\]
where $n>m$ is the virtual dimension. In case $n = +\infty$ one simply writes $\ric_f$. The class of weighted manifolds contains the following important  examples arising from geometry.

\begin{example}[Gradient Ricci solitons]
    We say that $M_f$ is a gradient Ricci soliton with soliton constant $\lambda \in \rr$ if $\ric_f = \lambda \metric$. The soliton is shrinking, steady or expanding if, respectively, $\lambda >0$ $\lambda=0$ or  $\lambda<0$. The prototypical example of a gradient shrinking Ricci soliton is the Gaussian space $\CG^{m}= \rr^m_f$ with $f(x)= |x|^2/2$. This clearly has finite volume $\vol_f(\CG^m)<+\infty$. In fact, as a consequence of weighted comparison arguments (see \cite{WW}), we know that any shrinking Ricci soliton $M_f$ has finite weighted volume $\vol_f(M_f)<+\infty$.
\end{example}

\begin{example}[Self-shrinkers of the MCF]
    Let $\vec x: M^m \to \rr^{m+1}$ be an isometrically immersed hypersurface. We say that $M$ is a self-shrinker of the Mean Curvature Flow if the mean curvature vector field of the immersion satisfies
    \begin{equation}\label{eq:MCF1}
    \vec x ^\perp = - \vec H,     
    \end{equation}
    where $(\cdot)^\perp$ denotes the projection on the normal bundle.  The hypersurface \(M\) inherits the ambient Gaussian weight \(f =|\vec x|^2/2\), thus defining a weighted manifold \(M_f\). Since \(\nabla^{\rr^{m+1}} f (\vec x)=\vec x\), the self-shrinker equation \eqref{eq:MCF1} can be rewritten as
\[
\vec H_f:=\vec H+(\nabla^{\rr^{m+1}} f )^\perp=0,
\]
showing that self-shrinkers are precisely weighted minimal hypersurfaces in Gaussian space. 
Concrete examples of self-shrinkers are the Gaussian space $\CG^m$, the sphere $\ss^{m}_{\sqrt{m}}$ and the generalized cylinder $\ss^{k}_{\sqrt{k}} \times \rr^{m-k}$. According to T. Colding and W. Minicozzi, \cite{CM}, these are the only mean-convex properly embedded self-shrinkers. Clearly, all these examples have finite weighted volume. A beautiful result of Ding and Xin \cite{DX}, later completed and extended by Cheng and Zhou \cite{CZ}, asserts that every properly immersed self-shrinker $M_f$ has finite weighted volume. The proof has a genuinely PDE flavor, as no comparison argument is available in the absence of curvature bounds.
\end{example}

As previously noted, a natural question is whether there is a unified perspective from which one can deduce the finiteness of the weighted volume of shrinking Ricci solitons and self-shrinkers for the MCF. In the remarkable work \cite{CZ}, the authors detected a set of differential conditions on the potential function (automatically satisfied in the geometric examples) under which a weighted manifold has finite weighted volume and polynomial Riemannian volume growth. 
Restricting our attention to the weighted setting their result states
\begin{theorem}[Cheng-Zhou]\label{th:CZ}
    Let $M^{m}_{f}$ be a complete weighted manifold whose potential function satisfies the following conditions:
\begin{itemize}
 \item [(a)] $0 \leq f(x) \to +\infty$, as $x \to \infty$;
 \item [(b)] $|\nabla f|^{2} \leq f$ on $M$;
 \item [(c)] $\Delta_{f}f +f \leq \kappa$ on $M$,
\end{itemize}
for some constant $\kappa \in \rr$. Then  $\vol_f(M_f) <+ \infty$.
\end{theorem}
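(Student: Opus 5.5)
The plan is to follow the Cheng--Zhou strategy and integrate the differential inequalities against the weighted measure over the sublevel sets $D_r=\{f< r\}$. These sets are relatively compact by (a) and completeness. Set
\[
V(r)=\vol_f(D_r)=\int_{D_r}e^{-f}\dv, \qquad \chi(r)=\int_{D_r} f\,e^{-f}\dv .
\]
By the coarea formula, $V'(r)=\int_{\{f=r\}}|\nabla f|^{-1}e^{-f}d\s$ for a.e.\ $r$, and $\chi'(r)=rV'(r)$.

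The key identity comes from the $f$-divergence theorem applied to $\nabla f$ on $D_r$ (for a.e.\ regular value $r$). The outward normal on $\{f=r\}$ is $\nabla f/|\nabla f|$, so
\[
\int_{D_r}\Delta_f f\dv_f=\int_{\{f=r\}}|\nabla f|\,e^{-f}d\s .
\]
Condition (b) gives $|\nabla f|\le f/|\nabla f|$ pointwise. Since $f=r$ on the level set, the right-hand side is at most $r\int_{\{f=r\}}|\nabla f|^{-1}e^{-f}d\s=rV'(r)$. Combining this with (c) yields
\[
-\chi(r)+\ldots \;\ge\; \int_{D_r}(\Delta_f f)\dv_f \;\ge\; -\chi(r)\quad\text{(roughly)},
\]
and more precisely
\[
\kappa V(r)-\chi(r)\ \ge\ \int_{D_r}\Delta_f f\dv_f\ \ge\ \ldots
\]
The useful direction is $\int_{D_r}\Delta_f f\dv_f\le \kappa V(r)-\chi(r)$. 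This runs the wrong way for a lower bound, so I need to be careful about which inequality helps.

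Cheng--Zhou actually use the unweighted volume. A cleaner route may be possible because the goal is only the finiteness of $\vol_f$. Write $\Delta_f f = \Delta f-|\nabla f|^2$ and look instead at $\int_{D_r}\div(e^{-f}\nabla f)\dv$, which equals the same boundary term. Trying directly: the boundary term is nonnegative, so $0\le \int_{D_r}\Delta_f f\dv_f\le\kappa V(r)-\chi(r)$. Hence $\chi(r)\le\kappa V(r)$, that is, $\int_{D_r}(f-\kappa)\dv_f\le0$ for every $r$.

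Finiteness then follows. Split $D_r$ into $D_{2|\kappa|+1}$ and its complement. On the complement $f-\kappa\ge f/2\ge1/2$ (taking $\kappa\ge0$ without loss of generality). So
\[
\tfrac12\vol_f(D_r\setminus D_{2\kappa+1})\le\int_{D_{2\kappa+1}}(\kappa-f)\dv_f\le \kappa\,\vol_f(D_{2\kappa+1}),
\]
which is finite because $D_{2\kappa+1}$ is relatively compact. Letting $r\to\infty$ gives $\vol_f(M_f)<+\infty$. Notably, this argument seems to use only (a) and (c) together with the sign of the boundary term; (b) would be needed for the finer polynomial growth statement.

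The main technical obstacle is justifying the divergence theorem on $D_r$, since $\{f=r\}$ may fail to be smooth. I will use Sard's theorem to pick a sequence of regular values $r_k\to\infty$, for which $D_{r_k}$ is a compact smooth domain and the boundary integrand $|\nabla f|e^{-f}\ge0$. That suffices, because the final estimate only needs to hold along such a sequence. One should also double-check that the boundary term $\int_{\partial D_r}\langle\nabla f,\eta\rangle\,d\s_f$ is indeed nonnegative; it is, since $\eta=\nabla f/|\nabla f|$ on a regular level set.
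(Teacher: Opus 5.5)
Your argument is correct, and it takes a genuinely different route from the paper. The paper never proves this statement directly. It notes that (a) and (c) give $\Delta_f f\le\kappa-f\to-\infty$, so $f$ plays two roles at once:
\begin{itemize}
\item[(i)] it is a Lyapunov function, so Khasminskii's Theorem~\ref{th:invariant-Khasm} yields an invariant measure of finite mass;
\item[(ii)] it is a Khasminskii test function for $f$-parabolicity, which makes invariant measures for $\Delta_f$ unique up to constants.
\end{itemize}
Since $\vol_f$ is invariant, it must be a multiple of the finite one; this is the content of Theorem~\ref{th:CZ_1}.

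You replace this machinery by a single flux computation. On regular sublevel sets, $\int_{D_r}\Delta_f f\,d\vol_f=\int_{\{f=r\}}|\nabla f|\,d\H^{m-1}_f\ge 0$. Together with (c) this gives $\int_{D_r}(f-\kappa)\,d\vol_f\le 0$, hence the explicit bound $\vol_f(M)\le(2\kappa+1)\vol_f(D_{2\kappa+1})$.

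Your route buys several things:
\begin{itemize}
\item it is self-contained and quantitative;
\item it shows transparently that (b) plays no role;
\item it only uses $\Delta_f f\le-\varepsilon$ off a compact set, so it proves Theorem~\ref{th:CZ_1} in a slightly stronger form;
\item it adapts to the non-symmetric setting. For a proper $\gamma$ and an $X$-invariant $\mu=\vol_f$, the invariance $\div_f(X-\nabla f)=0$ gives $\int_{\{\gamma<r\}}\langle X-\nabla f,\nabla\gamma\rangle\,d\mu=r\int_{\{\gamma=r\}}\langle X-\nabla f,\eta\rangle\,d\H^{m-1}_f=0$. This is the same trick used in the proof of Theorem~\ref{th:volumecomparison}(A). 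It yields $\mu(M)<+\infty$ for every $X$-invariant $\mu$ whenever $\Delta_X\gamma\le-\varepsilon$ off a compact set.
\end{itemize}
What the paper's route buys in exchange is additional structure: $f$-parabolicity and essential uniqueness of the invariant measure. The paper needs exactly this for its uniqueness assertions for the $X$-Laplacian.

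For the write-up, delete the exploratory middle part. That means the display with ``$\ldots$'', the use of (b) to bound the flux by $rV'(r)$, and the functions $V$ and $\chi$ altogether; only the sign of the flux is used. Also, completeness is not needed to make $D_r$ relatively compact, since (a) alone does that.
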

Reversing the usual PDE perspective, let us now consider an invariant measure for $\Delta_f$ defined in the weighted manifold $M_f$, that is, an absolutely continuous measure $d\mu = u\, d\vol_f$ whose positive smooth density $u$ satisfies 
\[
\Delta_f^\ast u = 0\, \text{ on }M,
\]
where $\Delta_f^\ast$ denotes the formal adjoint of $\Delta_f$ in $L^2(d\vol_f)$. By symmetry, $\Delta_f^\ast$ coincides with $\Delta_f$ and, in particular, the background measure $\dv_f = 1 \dv_f$ is itself invariant for $\Delta_f$. Fundamental contributions to the invariant measure theory were made by Khasminskii in the context of semigroups and stochastic processes, \cite{Kh}. Function theory again plays a key role, as emphasized by the following general result (see also \cite{BR}).

\begin{theorem}[Khasminskii]\label{th:invariant-Khasm}
    Assume that $M_f$ supports a $C^2$-Lyapunov function $\gamma \colon  M \to \rr$, namely, $\gamma$ satisfies the following conditions:
    \begin{itemize}
        \item [(a)] $0 \leq \gamma(x) \to +\infty$ as $x \to \infty$;
        \item [(b)] $\Delta_f \gamma(x) \to -\infty$ as $x \to \infty$.
    \end{itemize}
Then there exists an invariant measure $\mu$ for $\Delta_f$ of finite mass, $\mu(M) <+\infty$.
\end{theorem}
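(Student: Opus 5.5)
The plan is the classical Krylov--Bogoliubov/Khasminskii argument. We build $\mu$ as a weak limit of time averages of the transition probabilities of the diffusion generated by $\Delta_f$. The Lyapunov function $\gamma$ supplies two things: it shows the diffusion does not explode, and it gives the uniform tightness needed to pass to the limit.

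\textbf{Step 1: non-explosion and the semigroup.} By (b) there is a compact $K_0$ outside which $\Delta_f\gamma\le -1$. Since $\gamma$ is $C^2$, $\Delta_f\gamma\le C$ on $M$ for some constant $C>0$. With (a), this makes $\gamma$ a Khasminskii test function, so the minimal diffusion $X_t$ with generator $\Delta_f$ is conservative. Analytically: the minimal heat semigroup $P_t$ of $\Delta_f$ satisfies $P_t1=1$, and its kernel $p_t(x,y)$ is smooth and positive with respect to $\dv_f$. Fix $x_0\in M$ and set $\mu_T(A)=\frac1T\int_0^T P_t\mathbf 1_A(x_0)\,dt$. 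Each $\mu_T$ is a probability measure.

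\textbf{Step 2: tightness.} Exhaust $M$ by relatively compact domains $\Omega_n$ and let $\tau_n$ be the exit time from $\Omega_n$. Dynkin's formula on $\Omega_n$ (or a parabolic maximum principle for the Dirichlet semigroups) gives
\[
\mathbb E_{x_0}\gamma(X_{t\wedge\tau_n})-\gamma(x_0)=\mathbb E_{x_0}\int_0^{t\wedge\tau_n}\Delta_f\gamma(X_s)\,ds .
\]
Let $\phi=-\Delta_f\gamma$, which tends to $+\infty$ at infinity and is bounded below by $-C$. Using $\gamma\ge0$ and letting $n\to\infty$ via non-explosion, Fatou's lemma for the positive part of $\phi$ and monotone convergence for the bounded negative part yield
\[
\frac1T\int_0^T P_t\phi(x_0)\,dt\le \frac{\gamma(x_0)}{T}+\text{const}.
\]
Hence $\int\phi\,d\mu_T$ is bounded uniformly for $T\ge1$. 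Since $\phi\to+\infty$ at infinity, Chebyshev's inequality gives $\mu_T(M\setminus\{\phi\le R\})\le c/R$, so the family $\{\mu_T\}$ is tight. I expect this step to be the main technical obstacle: the Fatou/monotone-convergence bookkeeping has to handle the negative part of $\phi$, and the localization is needed because $\gamma$ need not be integrable against $p_t$ a priori.

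\textbf{Step 3: passing to the limit and invariance.} By Prokhorov's theorem a subsequence $\mu_{T_k}$ converges weakly to a probability measure $\mu$; tightness guarantees no mass is lost, so $\mu(M)=1<+\infty$. For $\varphi\in C^\infty_c(M)$,
\[
\int\Delta_f\varphi\,d\mu_T=\frac1T\bigl(P_T\varphi(x_0)-\varphi(x_0)\bigr)\to0,
\]
and $\Delta_f\varphi$ is continuous with compact support, so $\int\Delta_f\varphi\,d\mu=0$ for all such $\varphi$. Thus $\mu$ solves $\Delta_f^\ast\mu=0$ in the sense of distributions. Since $\Delta_f^\ast$ is elliptic with smooth coefficients (here it coincides with $\Delta_f$ with respect to $\dv_f$), hypoellipticity gives $\mu=u\dv_f$ with $u$ smooth and $\Delta_f u=0$.

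\textbf{Step 4: positivity of the density.} We have $u\ge0$, and $u\not\equiv0$ because $\mu(M)=1$. Applying the strong maximum principle (Harnack inequality) to $\Delta_f u=0$ on connected $M$ gives $u>0$. Hence $\mu$ is an invariant measure of finite mass.
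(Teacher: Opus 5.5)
Your argument is correct. The paper gives no proof of this statement: it is quoted from Khasminskii \cite{Kh} (see also \cite{BR}), and what you wrote is the classical Krylov--Bogoliubov proof behind that citation. The Lyapunov function gives non-explosion and, via Dynkin's formula and Chebyshev, tightness of the time averages $\mu_T$; the weak limit is invariant; Weyl regularity and the strong maximum principle, as in Lemma~\ref{lemma-abstractexistence}, then give a smooth positive density. Its merit is that it never uses the symmetry of $\Delta_f$, so it carries over verbatim to $\Delta_X$, which is how the paper later uses the result.

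For the weighted statement itself there is a far more elementary route, suggested by the paper's own remark that, by symmetry, $\vol_f$ is itself invariant for $\Delta_f$. The point is that $\vol_f$ is already finite. Take $K_0$ compact with $\Delta_f\gamma\le-1$ off $K_0$, and $C\ge0$ with $\Delta_f\gamma\le C$ on $M$. Let $\psi_t(s)=\psi(s-t)$, where $\psi$ is smooth and nonincreasing with $\psi=1$ on $(-\infty,0]$ and $\psi=0$ on $[1,\infty)$. Since $\psi_t(\gamma)$ has compact support, integration by parts gives
\[
\int_M\psi_t(\gamma)\,\Delta_f\gamma\dv_f=-\int_M\psi_t'(\gamma)\,|\nabla\gamma|^2\dv_f\ge0 .
\]
Hence $\int_{M\setminus K_0}\psi_t(\gamma)\dv_f\le\int_{M\setminus K_0}\psi_t(\gamma)(-\Delta_f\gamma)\dv_f\le\int_{K_0}\psi_t(\gamma)\,\Delta_f\gamma\dv_f\le C\,\vol_f(K_0)$. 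Letting $t\to+\infty$ yields $\vol_f(M)\le(1+C)\,\vol_f(K_0)$.

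This shortcut needs only $\Delta_f\gamma\le-\varepsilon<0$ off a compact set, rather than the full $\Delta_f\gamma\to-\infty$ your Chebyshev step uses. It also needs neither completeness nor any stochastic machinery. Finally, it yields Theorem~\ref{th:CZ_1} directly, instead of first producing some finite $\mu$ and then using parabolicity and uniqueness to identify $\mu$ with a multiple of $\vol_f$.
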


If $M_f$ is parabolic, namely,  solutions $0 \leq w \in C(M) \cap W^{1,2}_{loc}(M)$  of $\Delta_f w \leq 0$ are constant, then by symmetry also positive solutions of the adjoint equation $\Delta_f^\ast u = 0$ on $M$ must be constant. Therefore, $\vol_f$ is, up to multiplicative constants, the unique invariant measure with respect to $\Delta_f$. On the other hand, it is apparent that Lyapunov functions meet the Khasminskii test of parabolicity:
there exists a function $\gamma \in \text{\rm Lip}_{loc}(M)$ satisfying:
\begin{itemize}
\item [(i)] $\g(x) \to \infty$ as $x \to \infty$;
\item [(ii)] $\Delta_{f} \g \leq 0$ outside a compact set.
\end{itemize}
This leads straightforwardly to a generalization of Theorem~\ref{th:CZ}, showing that  the finiteness of the weighted volume can be obtained without imposing the gradient condition and assuming that $f$ satisfies a substantially relaxed  PDE.

\begin{theoremA}[Cheng-Zhou revisited]\label{th:CZ_1}
Every complete weighted manifold $M^{m}_{f}$ whose potential function satisfies:
\begin{itemize}
 \item [(a)] $0 \leq f(x) \to +\infty$, as $x \to \infty$;
 \item [(b)] $\Delta_{f}f \to -\infty$, as $x \to \infty$,
\end{itemize}
has finite weighted volume $\vol_f(M_f) <+ \infty$.
\end{theoremA}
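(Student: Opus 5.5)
The plan is to combine Theorem~\ref{th:invariant-Khasm} with the Khasminskii test of parabolicity, as the discussion preceding the statement suggests. Set $\gamma = f$. By (a) and (b), $f$ is a $C^2$ (indeed smooth) Lyapunov function for $\Delta_f$: $0\le f\to+\infty$ and $\Delta_f f\to-\infty$ as $x\to\infty$. Theorem~\ref{th:invariant-Khasm} then yields an invariant measure $\mu$ for $\Delta_f$ with $\mu(M)<+\infty$. By regularity for the adjoint equation, it can be written as $d\mu = u\,d\vol_f$ with $u>0$ smooth and $\Delta_f^\ast u = \Delta_f u = 0$ on $M$.

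Next, $M_f$ is parabolic. The same function $f$ satisfies the Khasminskii test: $f\in \Lip_{loc}(M)$, $f\to\infty$, and since $\Delta_f f\to -\infty$ there is a compact set $K$ with $\Delta_f f\le 0$ on $M\setminus K$. Completeness is used here, because it makes the sublevel sets $\{f\le t\}$ compact, so that the usual comparison argument works. For that argument, let $0\le w$ be $\Delta_f$-superharmonic and fix an exhausting domain $\Omega\supset K$. For $\epsilon>0$, compare $w$ with $\min_{\partial\Omega} w - \epsilon f$ on the compact sets $\{f\le t\}\setminus\Omega$ and let $t\to\infty$. This gives $w\ge \min_{\partial\Omega}w$ outside $\Omega$. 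A standard argument then forces $w$ to attain an interior minimum, so $w$ is constant by the strong minimum principle. Hence $M_f$ is parabolic.

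Finally, we use uniqueness. Since $\Delta_f$ is symmetric with respect to $\vol_f$, the positive density $u$ solves $\Delta_f u = 0$, and in particular $\Delta_f u\le 0$. Parabolicity then gives $u\equiv c>0$. Therefore $\vol_f(M) = c^{-1}\mu(M) < +\infty$.

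The main obstacle is the identification of the Khasminskii invariant measure as a measure with a smooth positive density solving $\Delta_f^\ast u=0$. One needs to know that Theorem~\ref{th:invariant-Khasm} produces an absolutely continuous measure, not merely a weak solution of $\Delta_f^\ast\mu=0$ in the sense of distributions. I would handle this by hypoellipticity: $\Delta_f^\ast$ is elliptic with smooth coefficients, so a distributional solution is given by a smooth density. Positivity then follows from the Harnack inequality and the strong maximum principle applied to the nonnegative, nontrivial solution $u$ of $\Delta_f u=0$.
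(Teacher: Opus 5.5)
Your proposal is correct and follows essentially the same route as the paper. The function $f$ serves both as a Lyapunov function for Theorem~\ref{th:invariant-Khasm}, which produces a finite invariant measure, and as a Khasminskii test function, which gives $f$-parabolicity; parabolicity together with the symmetry of $\Delta_f$ with respect to $\vol_f$ then forces that measure to be a constant multiple of $\vol_f$. Your extra details, namely the comparison proof of the Khasminskii test and the smooth-density argument (which is Lemma~\ref{lemma-abstractexistence}(b) in the paper), only fill in steps the paper cites.
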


It is remarkable that, even in the symmetric setting of weighted manifolds, Liouville-type results, such as parabolicity, play a central role in establishing the finiteness of the weighted volume measure. Moving to the non-symmetric setting, the preceding discussion suggests that  invariant measures may be effective  in extending the symmetric picture. Indeed, as we shall see shortly, we are able to obtain a version of Theorem~\ref{th:CZ_1} for the $X$-Laplacian.


Let $X$ be a smooth vector field and let the 
non-symmetric $X$-Laplacian $\Delta_X$ be the operator which acts on $C^2$-functions by
\[
\Delta_X u = \Delta u - \< X , \nabla u\>.
\]
An absolutely continuous measure $d\mu = u\, d\vol_f$ is said to be invariant for $\Delta_X$ if its positive smooth density $u$ satisfies 
\[
\Delta_X^\ast u = 0\, \text{ on }M,
\]
where $\Delta_X^\ast$ denotes the formal adjoint of $\Delta_X$ in $L^2(\dv)$.

First of all, it is natural to ask whether the available comparison theory for the $X$-Laplacian (cf. \cite{BQ,CMMR}) yields  volume comparison results for its invariant measures. The following result, proved in Section \ref{section:geometry}, provides a positive answer to this question.\footnote{See \cite{BQ} for an 
instructive discussion.} Recall that, associated with a given smooth vector field $X$, there is a family of Ricci tensors defined, for each virtual dimension $n \in (m,+\infty]$, by the formula
\[
\ric_{X,n}=\ric+\frac{1}{2}L_X \metric-\frac{1}{n-m}X^\flat\otimes X^\flat,
\]
where $L_X$ denotes the Lie-derivative along $X$. We simply write $\ric_X$ when $n=+\infty$.

\begin{theoremA}\label{th:volumecomparison_intro}
Let $(M,\metric )$ be a complete, $m$-dimensional Riemannian manifold endowed with a smooth vector field $X$. Let $\rho(x)$ be the intrinsic geodesic distance function from a reference origin $o \in M$. Assume that either one of the following conditions is satisfied:
\begin{itemize}
    \item [(i)] $ \ric_{X,n} \geq (n-1) K$ for some constant $K \in \rr$ and some virtual dimension $n>m$.\medskip
    \item [(ii)] $ \ric_{X} \geq (m-1)K$ for some constant $K > 0$.
\end{itemize}
Then, correspondingly, the following volume estimates hold for every $X$-invariant measure $d\mu=e^{-f}\dv$:
\begin{itemize}
    \item [(j)] The function $R \mapsto \vol_f(B_R(o))/V_K(R)$ is non-increasing for a.e. $R>1$, where $V_K(R)$ denote the volume of the geodesic ball of radius $R$ in the simply connected $n$-dimensional space form of constant sectional curvature $K$.\medskip
    \item [(jj)] $\mu(M) < +\infty$.
\end{itemize}
\end{theoremA}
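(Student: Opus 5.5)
Throughout, write the invariance condition $\Delta_X^\ast(e^{-f})=0$ in a usable form. Since $\Delta_X^\ast u=\Delta u+\div(uX)$, the condition reads $\div(e^{-f}(X-\nabla f))=0$. Set $Z\doteq X-\nabla f$, so $\div_f Z=0$. The plan is to compare $\Delta_X$ with the symmetric operator $\Delta_f$, namely
\[
\Delta_X\rho=\Delta_f\rho-\<Z,\nabla\rho\>,
\]
to use the known Laplacian comparison for $\Delta_X\rho$ under $\ric_{X,n}$ or $\ric_X$ bounds (cf. \cite{BQ,CMMR}), and to kill the extra term $\<Z,\nabla\rho\>$ after integration with the $f$-divergence theorem, using $\div_fZ=0$.

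For (j), I will work in polar coordinates about $o$. Write $d\vol_f=e^{-f}\mathcal{A}(r,\theta)\,dr\,d\theta$ inside the cut locus, and set $J(r,\theta)=e^{-f}\mathcal{A}$, so that $\partial_r\log J=\Delta_f\rho$. The comparison result under (i) gives $\Delta_X\rho\le (n-1)\frac{sn_K'}{sn_K}(\rho)$ pointwise off the cut locus and $\{o\}$, and in the barrier/distributional sense globally. I will then integrate over spheres rather than along individual geodesics. Let $S(R)=\frac{d}{dR}\vol_f(B_R)$ for a.e. $R$. Apply the $f$-divergence theorem to $Z$ on $B_R$, justified for a.e. $R$ because the cut locus has measure zero, or via a cutoff $\phi(\rho)$ approximation. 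This gives $\int_{\partial B_R}\<Z,\nabla\rho\>\,d\s_f=0$. Integrating $\Delta_X\rho$ against $e^{-f}$ on the annulus $B_R\setminus B_r$ (distributionally, which takes the cut locus with the right sign) then gives
\[
S(R)-S(r)=\int_{B_R\setminus B_r}\Delta_f\rho\,d\vol_f=\int_{B_R\setminus B_r}\Delta_X\rho\,d\vol_f\le (n-1)\int_r^R \frac{sn_K'}{sn_K}(t)\,S(t)\,dt .
\]
A Gronwall argument then shows that $S(R)/sn_K^{n-1}(R)$ is non-increasing. The Gromov–Bishop lemma (ratio of integrals of functions with monotone quotient) turns this into monotonicity of $\vol_f(B_R)/V_K(R)$. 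Restricting to $R>1$ avoids the singularity at $o$, where the $n$-dimensional model does not match the local $m$-dimensional behaviour; we only need monotonicity for $R>1$.

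For (jj), the bound $\ric_X\ge(m-1)K>0$ with $n=\infty$ gives the standard estimate (Wei–Wylie type, valid for $X$ via \cite{BQ})
\[
\Delta_X\rho\le -(m-1)K\rho + C_0\quad \text{for } \rho\ge 1 .
\]
This is obtained by integrating the Riccati inequality along geodesics with the term $\<X,\gamma'\>$ handled by integration by parts. The constant $C_0$ is a priori dependent on $\sup_{B_1}|X|$ only, which is fine. The same annulus identity then gives $S(R)-S(1)\le\int_1^R(C_0-(m-1)Kt)S(t)\,dt$. Hence $S(R)\le C e^{C_0R-(m-1)KR^2/2}$, which is integrable, so $\mu(M)<\infty$.

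The main obstacle is making the divergence cancellation rigorous across the cut locus, where $\rho$ is only Lipschitz and the surfaces $\partial B_R$ are not smooth. I will handle this by working with test functions $\phi(\rho)$, where $\phi$ is a smooth approximation of $1_{[r,R]}$. Testing $\div_f Z=0$ against $\phi(\rho)$ gives $\int\phi'(\rho)\<Z,\nabla\rho\>\,d\vol_f=0$. Testing the distributional inequality for $\Delta_X\rho$ against $\phi(\rho)\ge0$ needs care, since $\phi'$ changes sign. So I will use one-sided cutoffs $\phi$ supported in $[0,R]$ with $\phi'\le0$ and differentiate in $R$. The weak inequality is preserved, and the terms become $S(R)$ for a.e. $R$ by coarea.
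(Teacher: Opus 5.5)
Your proposal is correct and is essentially the paper's argument. You write $\Delta_X=\Delta_f-\langle Z,\nabla\,\cdot\,\rangle$ with $\div_f Z=0$, test the comparison inequality for $\Delta_X\rho$ against nonnegative radial cut-offs so that the flux of $Z$ through spheres drops out, and finish with the classical Bishop--Gromov/Gronwall argument; the paper differs only in detail, justifying the distributional inequality across the cut locus via Calabi barriers and Ishii's theorem (with a base-point shift under (ii)), and for (jj) settling for $\frac{d}{dR}\vol_f(B_R)\le\alpha-\beta\,\vol_f(B_R)$ rather than your Gaussian bound on $S$.

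The worry in your last paragraph is unfounded. Admissibility only needs $\phi\ge0$, so trapezoidal annulus cut-offs are allowed, and they are what you need. You must use the annulus inequality for all $0<r<R$: cut-offs from $0$ alone only give $S(R)\le(n-1)\int_0^R\frac{sn_K'}{sn_K}S\,dt$, and Gromov's lemma needs $S/sn_K^{n-1}$ non-increasing from $0$, not just for $R>1$.
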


\subsection{Parabolicity: weighted volumes and other criteria}

A useful tool for studying the global geometry of a space is represented by the potential theoretic notion of parabolicity. We say that a manifold $M$ is $X$-parabolic, or parabolic with respect to $\Delta_X$, if the following Liouville property is verified: a positive function $u \in C^0(M) \cap W^{1,2}_{loc}(M)$ satisfying $\Delta_X u\leq 0$ weakly on $M$ must be constant. In the symmetric setting where $X=\nabla f$, $f$-parabolicity is a well-established concept with several equivalent characterizations in terms of Brownian motion, Green function, capacity, Ahlfors maximum principle, Hardy-type inequality and Khasminskii test, to mention only a few (see e.g. \cite{AMR,Gr3}). For general drifts, this picture is less developed, and only a few equivalences are available. For instance, combining results  from \cite{CMMR} and \cite{MP,MP1},  $X$-parabolicity can be characterized in terms of the Ahlfors maximum principle and the following Khasminskii test: there exists a function $\gamma \in C^2(M)$ satisfying:
\begin{itemize}
\item [(i)] $\g(x) \to \infty$ as $x \to \infty$;
\item [(ii)] $\Delta_{X} \g \leq 0$ outside a compact set.
\end{itemize}
Another relevant characterization of parabolicity is given by the validity of a global Green formula for a suitable class of functions (cf. \cite{Gr1,GM}).

One of the main goals of the present paper is to complete the picture by extending the classical characterizations of parabolicity to the setting of general drifted Laplace operators. Again, invariant measures provide the key tool in this investigation. To this end, we first define a non-variational capacity associated with the drifted Laplacian via its invariant measure, which is similar to the Laplacian capacity; see Definition \ref{def:X-capacity} (cf. \cite{Ba,BMP,DKK}): given a compact set $K \subset M$, define
$$\CAP_{X,\mu}(K) = \inf\left\lbrace \int_M \vert \Delta_{X} \varphi \vert d\mu \colon \varphi \in C^\infty_{c}(M), \  \varphi \geq 1 \ \text{in some neighborhood of }  K\right\rbrace.$$
Extending \cite{Gr1,GM}, we also introduce the following $X$-Laplacian version of the global Green formula, which we refer to as the $X$-Green formula, see Definition \ref{def:X-GreenFormula}: for every function $w \in L^\infty(M)$ such that $\Delta_X w \in L^1_{\text{loc}}(M,\mu)$ and $\left(\Delta_{X} w\right)_+ \in L^1(M,\mu)$ it holds
$$ \int_M \Delta_{X} w \, d\mu = 0.$$

One of our main results is the following characterization of $X$-parabolicity which also involves the non-symmetric Green function of the $X$-Laplacian.

\begin{theoremA}\label{thm_equiv_Xparab_intro}
Let $(M,\metric)$ be a Riemannian manifold endowed with an $X$-invariant measure $\mu$. The following are equivalent.
\begin{itemize}
\item[1.] $M$ is $X$-parabolic.
\item[2.] There is no positive $X$-Green function, i.e., $g_X(x,y) \equiv \infty$.
\item[3.] $\CAP_{X,\mu}(K)=0$ for every compact set $K$. 
\item[4.] $(M,\metric,\mu)$ satisfies the $X$-Green formula.
\end{itemize}
\end{theoremA}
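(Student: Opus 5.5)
The equivalence $1\Leftrightarrow 2$ is the classical link between parabolicity and the Green function, and I would treat it in the usual way. Exhaust $M$ by smooth relatively compact domains $\Omega_k$ and take the Dirichlet Green functions $g^{\Omega_k}_X(x,y)$ of $-\Delta_X$. By the maximum principle they increase in $k$, so $g_X=\lim_k g^{\Omega_k}_X$ exists in $(0,\infty]$.

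For this step I will use the equilibrium potentials $h_k$. Here $K$ is a compact set with smooth boundary, and $h_k$ solves
\[
\Delta_X h_k=0 \text{ on } \Omega_k\setminus K,\qquad h_k=1 \text{ on } K,\qquad h_k=0 \text{ on } \partial\Omega_k .
\]
These $h_k$ increase to a function $h$. The Khasminskii/Ahlfors characterization quoted before the statement (from \cite{CMMR,MP,MP1}) gives that $X$-parabolicity is equivalent to $h\equiv 1$. A Harnack comparison of $g^{\Omega_k}_X(\cdot,y)$ with $1-h_k$ away from the pole then shows that $g_X<\infty$ exactly when $h\not\equiv1$. That gives $1\Leftrightarrow2$.

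The invariant measure enters through the adjoint identity. Since $\Delta_X^\ast u=0$ for the density $u$ of $\mu$, the divergence theorem gives, for $\varphi$ smooth on $\overline{\Omega}$,
\[
\int_\Omega \Delta_X\varphi\, d\mu=\int_{\partial\Omega}\bigl(u\,\partial_\eta\varphi-\varphi\,\partial_\eta u+\varphi\, u\langle X,\eta\rangle\bigr)d\sigma,
\]
and in particular $\int_M\Delta_X\varphi\,d\mu=0$ for $\varphi\in C^\infty_c$.

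For $1\Rightarrow3$, I take $\varphi=1-h_k$ suitably mollified and cut off. Then $\Delta_X\varphi$ is concentrated near $\partial K$ and $\partial\Omega_k$, and its total variation with respect to $\mu$ is controlled by the boundary fluxes
\[
\Phi_k=\int_{\partial K} u\,\partial_\eta h_k\, d\sigma .
\]
Using the adjoint identity and the invariance of $\mu$, the flux through $\partial\Omega_k$ equals the flux through $\partial K$, and as $k\to\infty$ this flux tends to the flux of $1-h$, which vanishes when $h\equiv1$. Thus $\CAP_{X,\mu}(K)=0$.

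For $3\Rightarrow1$, I argue by contradiction. If $M$ is non-parabolic, the limit $h$ is nonconstant with $0<h\le1$. For any admissible $\varphi$ with $\varphi\ge1$ near $K$, I would apply Green's identity on $\Omega_k\setminus K$ with the pair $(\varphi,h_k)$, using $\Delta_X$ acting on $\varphi$ and $\Delta_X^\ast$ acting on $u$. This should give
\[
\int|\Delta_X\varphi|\,d\mu\ \ge\ c\,\Phi_k\ \ge\ c\,\Phi>0 .
\]
This lower bound is the main obstacle. The capacity is non-variational, and the drift prevents symmetrizing, so the duality between $\varphi$ and the equilibrium potential has to be run through $\mu$ specifically, including the boundary term $\varphi\, u\langle X,\eta\rangle$. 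I would try first to write the flux as $\int \Delta_X\varphi\cdot(1-h)\,d\mu$ up to boundary terms, and then use $0\le 1-h\le1$.

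For $4\Rightarrow1$, let $w$ be a positive superharmonic function, that is, $\Delta_Xw\le0$. Replace it by $\min(w,c)$: this is bounded, $(\Delta_X\min(w,c))_+=0$ in the weak sense (regularizing as needed), and $\Delta_X\min(w,c)\le0$ is a measure. The $X$-Green formula then forces $\Delta_X\min(w,c)=0$. Taking $c$ between two values of $w$, the strong maximum principle makes $w$ constant.

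For $1\Rightarrow4$, take $w\in L^\infty$ with $(\Delta_Xw)_+\in L^1(\mu)$. I would use cutoffs $\varphi_k=1-h_k$-type functions whose $\int|\Delta_X\varphi_k|\,d\mu\to0$, which is exactly $\CAP_{X,\mu}=0$ from step 3. This gives
\[
\Bigl|\int \varphi_k\,\Delta_Xw\,d\mu\Bigr| = \Bigl|\int w\,\Delta_X^{\ast,\mu}\varphi_k\Bigr| \le \|w\|_\infty\int|\Delta_X\varphi_k|\,d\mu\to0 ,
\]
where $\Delta_X^{\ast,\mu}$ is the $\mu$-adjoint. The remaining care is that the $\mu$-adjoint of $\Delta_X$ is $\Delta_{X'}$ for a different drift. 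So I would instead run the capacity argument for that adjoint drift, again using invariance of $\mu$ to identify the two capacities. Monotone and dominated convergence on the positive and negative parts of $\Delta_Xw$ then yields $\int\Delta_Xw\,d\mu=0$.
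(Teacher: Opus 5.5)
\paragraph{The gap: nothing is derived from item 3.}
Neither of your routes out of item 3 works, so the cycle of implications does not close.

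For the direct $3\Rightarrow1$, which you rightly flag as the crux, the pairing you propose fails. Let $h$ be the $\Delta_X$-equilibrium potential of a smooth compact $K$, extended by $1$ on $K$, and let $\varphi\in C^\infty_c(M)$. Integrating by parts with $\div_f(X-\nabla f)=0$ gives
\[
\int_M h\,\Delta_X\varphi\,d\mu=\int_{\partial K}\varphi\,\partial_\eta h\,d\H^{m-1}_f+\int_{M\setminus K}\varphi\,\Delta_{X_\mu}h\,d\mu ,
\]
where $\eta$ is the outer normal of $K$. Since $h$ is $X$-harmonic, not $X_\mu$-harmonic, the bulk term equals $2\int_{M\setminus K}\varphi\,\langle X-\nabla f,\nabla h\rangle\,d\mu$. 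This term has no sign and is not controlled by the flux. Using $1-h$ instead changes nothing, because $\int_M\Delta_X\varphi\,d\mu=0$.

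Your $1\Rightarrow4$, which is really meant as $3\Rightarrow4$, has the same defect. The duality produces $\Delta_X^{\ast,\mu}\varphi_k=\Delta_{X_\mu}\varphi_k=\Delta_X\varphi_k+2\langle X-\nabla f,\nabla\varphi_k\rangle$. Invariance of $\mu$ alone does not let you pass from $\CAP_{X,\mu}$ to $\CAP_{X_\mu,\mu}$. You can repair $1\Rightarrow4$ by going through $X_\mu$-parabolicity: use $g_X(x,y)=g_{X_\mu}(y,x)$, then your $1\Rightarrow3$ for the drift $X_\mu$, for which $\mu$ is again invariant. But that argument starts from item 1, not item 3.

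\paragraph{Two ways to close it.}
The paper proves $3\Rightarrow4$ by the Grigor'yan--Masamune level-set argument, which never involves the adjoint.
\begin{itemize}
\item Take $\varphi$ nearly optimal for $\CAP_{X,\mu}(K,U)$ and set $v=\varphi-w$. A regular value $\varepsilon$ gives $K\subset\Omega=\{v>\varepsilon\}\subset U$ with $\partial_\eta w\ge\partial_\eta\varphi$ on $\partial\Omega$.
\item On $\partial\Omega$ you have $w=\varphi-\varepsilon$, and the $\mu$-flux of $X-\nabla f$ through $\partial\Omega$ vanishes. So the drift boundary terms of $w$ and $\varphi$ coincide, and $-\int_\Omega\Delta_Xw\,d\mu\le-\int_\Omega\Delta_X\varphi\,d\mu\le2\CAP_{X,\mu}(K,U)$.
\item An exhaustion, plus the same bound applied to $1-w$, yields the Green formula. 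The cycle is closed by testing the formula on the bounded Green potential $G_X\varphi$ when $g_X<\infty$.
\end{itemize}

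Alternatively, your target inequality is true if you pair $\varphi$ with the equilibrium potential $h^\ast_k$ of $(K,\Omega_k)$ for the adjoint $\Delta_{X_\mu}$. The bulk term then vanishes, so $\int_M|\Delta_X\varphi|\,d\mu\ge\int_{\partial K}|\partial_\eta h^\ast_k|\,d\H^{m-1}_f$ whenever $\supp\varphi\subset\Omega_k$. Green's identity for the pair $(h_k,h^\ast_k)$ on $\Omega_k\setminus K$, together with $\int_{\partial K}\langle X-\nabla f,\eta\rangle\,d\H^{m-1}_f=0$, shows that this flux equals $\Phi_k\ge\Phi$. Hence $\CAP_{X,\mu}(K)\ge\Phi>0$ in the non-parabolic case.

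\paragraph{Smaller points.}
\begin{itemize}
\item In $1\Rightarrow3$ the competitor must be $h_k$, not $1-h_k$. Its mollification must also preserve the $L^1$ bound on $\Delta_X$; the paper uses the kernel of Section~\ref{section:kernels} for this.
\item In your $4\Rightarrow1$, $\Delta_X\min(w,c)$ is in general only a measure, so Definition~\ref{def:X-GreenFormula} does not apply as stated.
\item In $2\Rightarrow1$, comparing $g^{\Omega_k}_X(\cdot,y)$ with $h_k$ does not by itself bound $g_X$. You need the representation $h_k(x)=\int_{\partial K}g^{\Omega_k}_X(x,z)\,|\partial_\eta h_k(z)|\,d\H^{m-1}_f(z)$ together with Harnack in $z$.
\end{itemize}
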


\begin{remark}\label{rmk_brezis-marcus-ponce-equiv}
As a consequence of Theorem~\ref{thm_equiv_Xparab_intro} we have the equivalence between the vanishing of the classical Newton capacity and the vanishing of the Laplacian capacity for $\Delta_f$, 
a problem first investigated for $\Delta$ in the Euclidean setting in \cite{BMP} and in the Riemannian setting in \cite{GPS}; see Corollary \ref{cor:equivalence} below.
\end{remark}

As mentioned above, the characterization of parabolicity in terms of the Liouville property plays a fundamental role in the uniqueness of invariant measures. Indeed, if $\mu$ is an invariant measure for $\Delta_X$ and $M$ is $X$-parabolic, then $M$ is also parabolic for its dual operator with respect to $\mu$ (see Proposition \ref{equiv_green_Xparab}) and every invariant measure for $\Delta_X$ must be a constant multiple of $\mu$. Therefore, since the Khasminskii existence result of Theorem \ref{th:invariant-Khasm} works for $\Delta_X$, \cite{Kh,BR, BRW}, repeating verbatim the reasoning we used in the weighted setting, we get the announced extension of Theorem \ref{th:CZ_1} to  non-symmetric operators.

\begin{theoremA}
  Let $(M,\metric)$ be a Riemannian manifold endowed with an $X$-invariant measure $\mu$. If there exists a function $\gamma \in C^2(M)$ satisfying
\begin{itemize}
 \item [(a)] $0 \leq \gamma(x) \to +\infty$, as $x \to \infty$;
 \item [(b)] $\Delta_{X}\gamma \to -\infty$, as $x \to \infty$,
\end{itemize}
then $\mu$ is the essentially unique $X$-invariant measure and $\mu(M)<+\infty$.
\end{theoremA}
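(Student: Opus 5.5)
The plan is to combine three ingredients: Khasminskii's existence result, the Khasminskii parabolicity test, and the uniqueness of invariant measures under $X$-parabolicity coming from Theorem~\ref{thm_equiv_Xparab_intro}. Since $M$ is tacitly connected and $\gamma$ exhausts $M$, condition (a) gives that the sublevel sets $\{\gamma\le t\}$ are compact.

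\emph{Step 1 (parabolicity).} By (b) there is a compact $K$ with $\Delta_X\gamma\le -1<0$ on $M\setminus K$. Together with (a), $\gamma$ satisfies the Khasminskii test for $X$-parabolicity recalled above (via \cite{CMMR,MP,MP1}). Hence $M$ is $X$-parabolic.

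\emph{Step 2 (a finite invariant measure).} Conditions (a) and (b) say exactly that $\gamma$ is a Lyapunov function for $\Delta_X$. By the non-symmetric version of Theorem~\ref{th:invariant-Khasm} (\cite{Kh,BR,BRW}), there is an invariant measure $\nu=v\dv$ for $\Delta_X$, with $v>0$ smooth, such that $\nu(M)<+\infty$.

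If one wants to see the mechanism behind this, it is a Krylov--Bogoliubov/Ces\`aro argument. One writes $\Delta_X\gamma\le C-\psi$ with $\psi\to+\infty$. Then Dynkin's formula for the diffusion, stopped at exit times of sublevel sets, gives the bound
\[
\frac1T\int_0^T E_x\bigl[\psi(X_s)\bigr]\,ds\le C+\frac{\gamma(x)}{T}.
\]
This yields tightness of the time averages, and any weak limit is a finite invariant probability. Hypoellipticity makes its density smooth, and the strong maximum principle for $\Delta_X^\ast$ makes it positive. I expect to simply cite this step rather than reprove it.

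\emph{Step 3 (uniqueness).} Let $\mu$ be the given invariant measure and $\nu$ any invariant measure. Write $\nu=h\,\mu$ with $h=v/u>0$ smooth. The key claim is that $h$ solves the dual equation of $\Delta_X$ with respect to $\mu$. Explicitly, for $\varphi\in C^\infty_c(M)$,
\[
0=\int_M\Delta_X\varphi\,d\nu=\int_M h\,\Delta_X\varphi\,d\mu,
\]
so $h$ is $\mu$-harmonic for the operator $L^\ast_\mu$, the $L^2(\mu)$-adjoint of $\Delta_X$. A direct computation identifies this adjoint as $L^\ast_\mu h=\Delta h-\langle \tilde X,\nabla h\rangle$ with $\tilde X=-X-2\nabla\log u$. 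This uses $\Delta_X^\ast u=0$ to cancel the zero-order term.

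By Proposition~\ref{equiv_green_Xparab}, $X$-parabolicity passes to this dual operator. Therefore the positive solution $h$ of $L^\ast_\mu h=0$ is constant, and so $\nu=c\mu$. Applying this to the measure from Step 2 gives $\mu(M)=c^{-1}\nu(M)<+\infty$, together with the essential uniqueness of $\mu$.

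The main obstacle is the transfer of parabolicity to the dual operator in Step 3. The paper already supplies it as Proposition~\ref{equiv_green_Xparab}, proved through the capacity/Green formula characterisation in Theorem~\ref{thm_equiv_Xparab_intro}. If I had to argue it directly, I would go through the $X$-Green formula. Take the positive supersolution $h$, truncate it to $w=\min(h,t)$, and test $\int_M\Delta_X(\cdot)\,d\mu=0$ against suitable compositions. Then use the vanishing of $\CAP_{X,\mu}$, which is the same for $\Delta_X$ and its dual because $\int|\Delta_X\varphi|\,d\mu$ is symmetric under duality, to force $\nabla h=0$.
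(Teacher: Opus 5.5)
Your proposal is correct and follows essentially the paper's own route. The Lyapunov function passes the Khasminskii test for $X$-parabolicity, and the Khasminskii existence theorem \cite{Kh,BR,BRW} supplies a finite invariant measure. Proposition~\ref{equiv_green_Xparab} then transfers parabolicity to $\Delta_{X_\mu}=\Delta_X^{\ast,\mu}$ (your $\tilde X$ is exactly $X_\mu=-X+2\nabla f$), which forces every invariant measure to be a constant multiple of $\mu$.

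Two small corrections, neither affecting your main argument. The paper proves Proposition~\ref{equiv_green_Xparab} via the identity $g_X(x,y)=g_{X_\mu}(y,x)$, not through capacities. Your aside that $\int_M|\Delta_X\varphi|\,d\mu$ is ``symmetric under duality'' fails for individual $\varphi$. The simultaneous vanishing of $\CAP_{X,\mu}$ and $\CAP_{X_\mu,\mu}$ holds only as a consequence of Theorem~\ref{thm_equiv_Xparab} combined with Proposition~\ref{equiv_green_Xparab}.
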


In the setting of weighted manifolds, the measure $\vol_f$ is naturally the unique invariant measure for $\Delta_f$ with respect to which it is symmetric (cf. Lemma~\ref{lem_symm_vol_f}). This fact identifies $\vol_f$ as a canonical measure for $\Delta_f$ and underlies the following classical interplay between $f$-parabolicity and weighted volume growth on complete manifolds:
  \[
\int^{+\infty} \frac{dR}{\H^{m-1}_{f}(\partial B_R(o))} = +\infty \qquad \Rightarrow \qquad f\text{-parabolicity},
    \]
where $\H^{m-1}_f = e^{-f} \H^{m-1}$ denotes the weighted $(m-1)$-dimensional Hausdorff measures.

The challenge in understanding whether, and to what extent, volume growth is related to $X$-parabolicity stems from the absence of a distinguished measure naturally associated with $\Delta_X$. Moreover, one can construct examples of non-parabolic complete manifolds carrying invariant measures of moderate mass-growth (see Example \ref{ex-1}). Surprisingly, this fact is already visible at the level of the $f$-Laplacian.

We address the problem of identifying which invariant measures, if any,  play a canonical role for the $X$-parabolicity in Section~\ref{section:volume} where we propose the following definition, see Definition \ref{def:L2defect}:
given a smooth $X$-invariant measure $d\mu=e^{-f} \dv$, the $L^2$-defect for $\Delta_X$ to be symmetric with respect to $\mu$ is the number 
    \[
    \d_f(\Delta_X) = \| X - \nabla f \|_{L^2(M,\dv_f)} = \left\{ \int_M |X - \nabla f |^2 \dv_f \right\}^{\frac{1}{2}}.
    \]
We call $\mu$ a {\it canonical measure} for $\Delta_X$ if $\d_f(\Delta_X)<+\infty$.

It is worth noting that a type of $L^p$-finiteness condition has already appeared in the literature on uniqueness problems of probability invariant measures associated with semigroups, \cite{ABR}. However, this condition emerged independently in our work on invariant measures of infinite mass, where it acquires a completely new meaning motivated by the search for novel links with potential theory.  In fact, with this new concept at hand, we can establish the crucial connection between $X$-parabolicity and moderate volume growth.

\begin{theoremA}\label{thm_X-parab_intro}
Let $(M,\metric )$ be a complete Riemannian manifold and let $d\mu= e^{-f}\dv$ be a canonical measure for the $X$-Laplacian. If, for some origin $o \in M$,
    \[
    \int^{+\infty} \frac{dR}{\H^{m-1}_f (\partial B_R(o))} = +\infty,
    \]
then $M$ is parabolic with respect to both $\Delta_X$ and its dual (with respect to $\mu$). In particular, $\mu$ is the unique invariant measure for the $X$-Laplacian.
\end{theoremA}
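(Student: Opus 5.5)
The plan is to reduce everything to a perturbation of the symmetric operator $\Delta_f$ by a drift that is divergence-free with respect to $\mu$, and then run the classical Liouville argument with logarithmic test functions. Set $Z \doteq X-\nabla f$. Since $\Delta e^{-f} = -\div(e^{-f}\nabla f)$, the invariance condition $\Delta_X^\ast e^{-f}=\Delta e^{-f}+\div(e^{-f}X)=0$ becomes
\[
\div\bigl(e^{-f}Z\bigr)=0, \qquad\text{i.e.}\qquad \div_f Z=0 .
\]
Canonicity says precisely that $Z\in L^2(M,\mu)$. We then have
\[
\Delta_X u=\Delta_f u-\langle Z,\nabla u\rangle .
\]
Integrating by parts against $\mu$ and using $\div_f Z=0$, the dual of $\Delta_X$ with respect to $\mu$ is $\Delta_f u+\langle Z,\nabla u\rangle$. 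Since $-Z$ is also $\mu$-divergence free and in $L^2(\mu)$, one argument covers both operators. It suffices to treat $L u=\Delta_f u-\langle Z,\nabla u\rangle$ with an arbitrary $\mu$-divergence-free $Z\in L^2(\mu)$.

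\textbf{Reduction to bounded supersolutions.} Let $0<u\in C^0\cap W^{1,2}_{loc}$ satisfy $Lu\le0$ weakly, and suppose $u$ is nonconstant. Then for suitable $c>0$ the function $\min(u,c)+1$ is again a weak supersolution, is nonconstant, and takes values in $[1,c+1]$. So we may assume $1\le u\le C$. Put $h=\log u$, which is bounded. Test the inequality with $\varphi^2/u$, where $\varphi$ is Lipschitz with compact support (compact support uses completeness). This gives
\[
\int\varphi^2|\nabla h|^2d\mu\le 2\int\varphi|\nabla\varphi||\nabla h|\,d\mu+\int\varphi^2\langle Z,\nabla h\rangle d\mu .
\]

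\textbf{The drift term.} This is the main obstacle: a plain Cauchy--Schwarz estimate only bounds it by $\|Z\|_{L^2}$, which does not tend to zero. Instead I use $\div_f Z=0$ (in the weak sense) and the boundedness of $h$:
\[
\int\varphi^2\langle Z,\nabla h\rangle d\mu=-\int h\langle Z,\nabla\varphi^2\rangle d\mu\le 2\|h\|_\infty\,\|Z\|_{L^2(\mu)}\Bigl(\int|\nabla\varphi|^2d\mu\Bigr)^{1/2}.
\]

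\textbf{Choice of cutoff.} Take the standard radial cutoff adapted to the volume condition. Let $A(s)=\H^{m-1}_f(\partial B_s(o))$. Set $\varphi=1$ on $B_R$, $\varphi=0$ off $B_{R'}$, and
\[
\varphi(x)=\frac{\int_{\rho(x)}^{R'}ds/A(s)}{\int_R^{R'}ds/A(s)} \qquad\text{in between.}
\]
By the coarea formula,
\[
\int|\nabla\varphi|^2d\mu=\Bigl(\int_R^{R'}\frac{ds}{A(s)}\Bigr)^{-1},
\]
and this tends to $0$ as $R'\to\infty$ by hypothesis. A Young inequality on the first term gives $\int\varphi^2|\nabla h|^2\le 4\int|\nabla\varphi|^2+4\|h\|_\infty\|Z\|_2(\int|\nabla\varphi|^2)^{1/2}$. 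This bounds $\int_{B_R}|\nabla h|^2\,d\mu$ by a quantity tending to zero as $R'\to\infty$, so $\nabla h=0$ on $B_R$ for every $R$. Hence $u$ is constant, a contradiction, and $M$ is $X$-parabolic. Applying the same argument with $-Z$ gives parabolicity of the dual operator.

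\textbf{Uniqueness of the invariant measure.} If $\nu=v\,\mu$ is another invariant measure with $v>0$ smooth, then $\Delta_X^\ast(v e^{-f})=0$ translates into $\Delta_f v+\langle Z,\nabla v\rangle=0$, i.e.\ $v$ is a positive solution of the dual equation. By dual parabolicity $v$ is constant; alternatively, one can invoke Proposition~\ref{equiv_green_Xparab} as in the discussion preceding the theorem.
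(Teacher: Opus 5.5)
Your proof is correct, but it takes a different route from the paper's.

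\textbf{The paper's route.} The paper does not argue on supersolutions at all. It first gets $f$-parabolicity of $M_f$ from the volume condition. It then regularizes the $f$-equilibrium potentials of $(\bar U_1,U_n)$ with the non-symmetric mollifier of Section~\ref{section:kernels}. This gives smooth $\varphi_j$ with $0\le\varphi_j\le1$, $\varphi_j=1$ on $\bar U_1$ and $\int_M|\Delta_f\varphi_j|\,d\mu\to0$. Next it uses $\int_M|\nabla\varphi_j|^2d\mu=-\int_M\varphi_j\Delta_f\varphi_j\,d\mu\le\int_M|\Delta_f\varphi_j|\,d\mu$ and Cauchy--Schwarz against $\|X-\nabla f\|_{L^2(\mu)}$ to get $\CAP_{X,\mu}(K)=0$. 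Finally it invokes Theorem~\ref{thm_equiv_Xparab} (capacity $\Rightarrow$ $X$-Green formula $\Rightarrow$ no $X$-Green function $\Rightarrow$ Liouville). The dual case follows from Proposition~\ref{equiv_green_Xparab}, or from the same argument, since $\mu$ is also $X_\mu$-invariant.

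\textbf{Your route.} You use the same core mechanism: $Z\in L^2(\mu)$ is paired by Cauchy--Schwarz with a cutoff of vanishing Dirichlet energy. But you place it inside a logarithmic Caccioppoli inequality. The integration by parts $\int\varphi^2\langle Z,\nabla h\rangle\,d\mu=-\int h\langle Z,\nabla\varphi^2\rangle\,d\mu$ is where $\div_f Z=0$ and your reduction to $1\le u\le C$ enter. It plays the role that the bound $\|\nabla\varphi_j\|_{2}^2\le\|\Delta_f\varphi_j\|_{1}$ plays in the paper.

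\textbf{What each buys.} Your version is self-contained and more elementary. It needs only Lipschitz cutoffs with small $\int|\nabla\varphi|^2d\mu$, that is, vanishing of the variational $f$-capacity. So it proves the principle ``$f$-parabolic plus finite $L^2$-defect implies $X$- and $X_\mu$-parabolic'' without Green functions, without the mollification machinery, and without cutoffs satisfying the stronger condition $\int|\Delta_f\varphi_j|\,d\mu\to0$. The paper's version instead lands on $\CAP_{X,\mu}(K)=0$, which is the converse, for vanishing capacities, of Proposition~\ref{controllo_fCap_XCap}. Through Theorem~\ref{thm_equiv_Xparab} this gives the $X$-Green formula and fits the result into the capacity characterization that is the paper's main theme.

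\textbf{One routine detail.} You skip checking that your radial cutoff is Lipschitz. This needs $1/\H^{m-1}_f(\partial B_s(o))$ bounded for $s$ in compact intervals. That holds by a relative isoperimetric inequality on compact sets. Alternatively, replace $1/A$ by $\min(1/A,N)$ and let $N\to\infty$; the energy is then still at most the reciprocal of the truncated integral.
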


\begin{remark}
The proof of Theorem~\ref{thm_X-parab_intro} follows by showing that, under the canonical measure assumption, the vanishing of the $f$-capacity implies the vanishing of the $X$-capacity. As we will see in Proposition~\ref{controllo_fCap_XCap}, for every $X$-invariant measure, we have the estimate
$$
\CAP_f(K,U)\leq \frac12 \CAP_X(K,U).
$$
\end{remark}

As a by-product of this original insight, we also investigate the extension of the Liouville-type result proved by S.T. Yau in the `70s which states that on any complete Riemannian manifold, nonnegative subharmonic functions are necessarily constant if they are in some $L^p$-class, $1 < p < +\infty$.
\begin{theoremA}\label{Yau_Lp_intro}
Let $(M,\metric)$ be a complete Riemannian manifold endowed with an $X$-invariant measure $d\mu = e^{-f}\dv$, and assume that
the $L^\infty$-defect given by 
\[ 
\d^\infty_f(\Delta_X) = \| X - \nabla f \|_{L^\infty(M,\dv_f)}
\]
is finite.
Then, for every $v \in C^{2}(M)$ and $1<p<\infty$,
\[
\begin{cases}
\Delta_X v \geq 0 \\
v \geq 0 \\
v \in L^{p}(M,\mu)
\end{cases}
\Longrightarrow
v \equiv const.
\]
\end{theoremA}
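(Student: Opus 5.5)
The plan is to adapt Yau's classical $L^p$ Caccioppoli argument, using the invariance of $\mu$ to neutralize the non-symmetric part of the drift. Write $X = \nabla f + Z$, so that $\d^\infty_f(\Delta_X)=\|Z\|_{L^\infty}<+\infty$ and $\Delta_X v = \Delta_f v - \<Z,\nabla v\>$. The invariance of $d\mu = e^{-f}\dv$ reads $\div(\nabla e^{-f} + e^{-f}X) = 0$. Since $\nabla e^{-f} = -e^{-f}\nabla f$, this reduces to $\div(e^{-f}Z)=0$, i.e.
\[
\div_f Z = 0 .
\]
This identity drives the whole argument. For any $G\in C^1$ and any $\psi\in C^\infty_c(M)$, the $f$-divergence theorem gives
\[
\int_M \psi\,\<Z,\nabla G(v)\>\,d\mu = -\int_M G(v)\,\<Z,\nabla\psi\>\,d\mu .
\]
Thus the drift term can always be moved onto the cutoff.

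Fix $R>0$ and a cutoff $\varphi$ with $\varphi\equiv 1$ on $B_R(o)$, $\supp\varphi\subset B_{2R}(o)$ and $|\nabla\varphi|\le 2/R$; completeness guarantees such cutoffs exist. To avoid the singularity of $v^{p-2}$ at zeros of $v$ when $1<p<2$, set $v_\varepsilon=v+\varepsilon$ and use the test function $\varphi^2 v\, v_\varepsilon^{p-2}\ge 0$. From $\Delta_X v\ge 0$ we get $\int \varphi^2 v\, v_\varepsilon^{p-2}\Delta_X v\,d\mu\ge 0$. We split this into two parts.

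\begin{itemize}
\item \emph{The symmetric part.} Integrating $\Delta_f$ by parts with respect to $\mu$ produces the good term $-\int\varphi^2 (v\,v_\varepsilon^{p-2})'|\nabla v|^2\,d\mu$, whose coefficient is comparable to $\min(1,p-1)\,v_\varepsilon^{p-2}$. It also produces the cross term $-2\int\varphi\, v\,v_\varepsilon^{p-2}\<\nabla\varphi,\nabla v\>\,d\mu$.
\item \emph{The drift part.} With $G(t)=\int_0^t s(s+\varepsilon)^{p-2}ds$, the drift contributes $-\int\varphi^2\<Z,\nabla G(v)\>\,d\mu$. By the identity above this equals $\int G(v)\<Z,\nabla\varphi^2\>\,d\mu$. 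Since $0\le G(t)\le C_p\, t\,(t+\varepsilon)^{p-1}$, it is bounded by $\frac{C\|Z\|_\infty}{R}\int_{B_{2R}} v\,v_\varepsilon^{p-1}\,d\mu$.
\end{itemize}

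Next apply Young's inequality to the cross term, absorbing half of it into the good term. This leaves
\[
\int_{B_R} v_\varepsilon^{p-2}|\nabla v|^2\,\chi\,d\mu \le \frac{C}{R^2}\int_{B_{2R}} v^2 v_\varepsilon^{p-2}\,d\mu + \frac{C}{R}\int_{B_{2R}} v\,v_\varepsilon^{p-1}\,d\mu ,
\]
where $\chi$ is a harmless positive factor coming from $(v\,v_\varepsilon^{p-2})'$ (equal to $1$ for $p\ge2$). On the ball $B_{2R}$, which has finite $\mu$-measure, we let $\varepsilon\to0$ using monotone or dominated convergence. The right-hand side then becomes $C(R^{-2}+R^{-1})\|v\|_{L^p(\mu)}^p$, and the left-hand side tends to $c_p\int_{B_R\cap\{v>0\}} v^{p-2}|\nabla v|^2\,d\mu$.

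Letting $R\to\infty$ and using $v\in L^p(M,\mu)$ gives $\nabla v=0$ on $\{v>0\}$. Since $\nabla v = 0$ wherever $v$ attains its minimum $0$, we get $\nabla v\equiv0$, so $v$ is constant. The key place where $\d^\infty_f<\infty$ is used is the drift estimate: the drift term decays like $R^{-1}\|v\|_p^p$ rather than like $R^{-2}$, but it still vanishes in the limit. I expect the main obstacle to be the bookkeeping with $\varepsilon$ for $1<p<2$. Specifically, one must keep the bound on $G$ uniform in $\varepsilon$ and justify the integration by parts on the level of $C^2$ functions composed with $t\mapsto t(t+\varepsilon)^{p-2}$. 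The first thing I would try there is the elementary bound $s(s+\varepsilon)^{p-2}\le s^{p-1}$, valid for $p<2$, which gives $G(t)\le t^p/p$ directly.
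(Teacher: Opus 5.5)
Your proposal is correct and follows essentially the same route as the paper: Yau's Caccioppoli argument with cutoffs supported in $B_{2R}$, where the invariance $\div_f(X-\nabla f)=0$ moves the drift term onto $\nabla\varphi^2$, so that the finite $L^\infty$-defect makes that term decay like $R^{-1}\|v\|_{L^p(\mu)}^p$. The differences are cosmetic: you use the test function $\varphi^2 v(v+\varepsilon)^{p-2}$ instead of the paper's $\phi^2u^{p-1}$ with $u$ replaced by $u+\delta$, and your factor $\chi$ equals $1$ only when $p=2$ (harmless, since only its positive lower bound is used); your $\varepsilon$-bookkeeping via $|\nabla v|=0$ on $\{v=0\}$ is sound.
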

As we shall see in Section \ref{section:Yau}, the finiteness of the $L^\infty$-defect can be sharpened to a suitable growth condition, which cannot be completely avoided, see Example \ref{sharpness_yau}.

\subsection{Organization of the paper} 
In Section \ref{section:invariantmeasures} we review basic aspects of invariant measures. A regularization technique via Riemannian convolution kernels that extends the construction in \cite{PVV} to general non-symmetric operators and will be a fundamental tool for our analysis,  is developed in Section \ref{section:kernels}.  In Section \ref{section:X-parab}, after a brief overview of the currently known theory of $X$-parabolicity, we introduce new characterizations in terms of a suitable notion of capacity and of the validity of certain Green formulas. This is the most technical part of the paper. 
Section \ref{section:volume} is devoted to the investigation of volume growth conditions. To this end, we introduce a new notion  of canonical invariant measure and explore some of its applications. In Section \ref{section:Yau} we show to what extent canonical measures are natural by generalizing, in the setting of drifted Laplacians, the celebrated Liouville theorem by S.T. Yau on $L^p$, positive subharmonic functions. Finally, in Section \ref{section:geometry}, we apply the previous theory to the geometry 
of generic shrinking Ricci solitons. This  requires 
new volume estimates under curvature bounds that are derived here.

\section{Aspects of invariant measures}\label{section:invariantmeasures}

\subsection{Generalities on invariant measures}

Let $(M,\metric ,\nu)$ be a Riemannian manifold endowed with a positive Radon measure $\nu$. Throughout this paper, we will always assume that $d\nu= w  \dv$ for a  density  $0<w \in C^\infty(M)$.

Let $\L$ be a second order, elliptic, differential operator with smooth coefficients on $M$. Its formal adjoint, with respect to $\nu$, is denoted by $\L^{\ast,\nu}$. By definition, for every $\vp,\psi \in C^{\infty}_{c}(M)$,
\[
\int_{M} \psi \L\vp\, d\nu = \int_{M} \vp \L^{\ast,\nu}\psi\, d\nu.
\]
Note that $\L^{\ast,\nu}$ is still elliptic with smooth coefficients.

\begin{example}
 Assume that
\begin{itemize}
 \item  $\nu = \vol$ is the Riemannian measure;
 \item $\Delta_{X} = \Delta - X$ where $X$ is a smooth vector field on $M$.
\end{itemize}
It is customary to call $\Delta_{X}$ the $X$-Laplacian. Note that, if $X = \nabla f$, then $\Delta_{X}$ is nothing but the $f$-Laplacian
\[
\Delta_{f} u = \Delta u -\<\nabla u, \nabla f\> = e^f \div(e^{-f}\nabla u)
\]
associated with the weighted Riemannian manifold
\[
M_{f} = (M,\metric ,\vol_f),
\]
where $d\vol_f=e^{-f} d\vol$.  Integrating by parts, it is easy to check that
\[
\Delta^\ast_X u \doteq \Delta_{X}^{\ast,\vol} u = \Delta u + \langle X, \nabla u \rangle + u\div X,
\]
which is a special case of the Fokker-Planck-Kolmogorov operator. Note that $\Delta^{\ast}_X$ can be written in divergence form as
\[
\Delta^\ast_X u =   \div (\nabla u + uX)=  \div \A(x,u,\nabla u)
\]
where $\A \colon M\times \mathbb{R}\times TM \rightarrow \mathbb{R}$ is given by $\A(x,r,\xi) = \xi + rX$. 
\end{example}

\begin{definition}
 We say that a positive Radon measure $\mu$ on $(M,\metric ,\nu)$ is an invariant measure for an elliptic operator $\L$ if $\mu$ is a distributional solution to  the equation
 \[
 \L^{\ast,\nu} \mu =0.
 \]
 Explicitly, the measure $\mu$ is invariant for $\L$ if
  \[
 \int_{M} \L \vp \, d\mu =0
 \]
 holds for every $\vp \in C^{\infty}_{c}(M)$.
\end{definition}

For a general elliptic operator, the existence of invariant measures is a consequence of the following simple abstract result.

\begin{lemma}\label{lemma-abstractexistence}
With the notation introduced above:
\begin{enumerate}
 \item [(a)] Assume that there exists a smooth solution $ u  > 0$ to the equation
\begin{equation}\label{ad-equation}
\L^{\ast,\nu} u = 0.
\end{equation}
Then,  $d\mu = u \, d\nu$ is a positive invariant measure for $\L$.
\item [(b)] Conversely, let $\mu$ be a positive invariant measure for $\L$. Then, $d\mu = u  d\nu$ for some  function $0< u \in C^\infty(M)$ satisfying \eqref{ad-equation}.
\item [(c)] An invariant measure $\mu$ for $\L$ is finite (hence a probability measure up to renormalization) if and only if $d\mu = u d\nu$, where $u>0$ is a smooth solution of \eqref{ad-equation} satisfying $u \in L^{1}(M,\nu)$.
\end{enumerate}
\end{lemma}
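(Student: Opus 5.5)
Part (a) is a direct integration by parts. If $0<u\in C^\infty(M)$ solves $\L^{\ast,\nu}u=0$, then for every $\vp\in C^\infty_c(M)$ the definition of the formal adjoint, applied with $\psi=u$, gives
\[
\int_M \L\vp\,u\,d\nu=\int_M \vp\,\L^{\ast,\nu}u\,d\nu=0 .
\]
The identity $\int\psi\L\vp\,d\nu=\int\vp\L^{\ast,\nu}\psi\,d\nu$ only needs one of the two functions to have compact support, since only integration by parts on $\supp\vp$ is used. Hence we may take $\psi=u$ even though $u$ is merely smooth. The measure $d\mu=u\,d\nu$ is positive and Radon because $u$ is continuous and positive. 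The equality $\int\L\vp\,d\mu=0$ is exactly the definition of invariance.

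For part (b), let $\mu$ be a positive invariant Radon measure and write $\tilde\mu$ for the distribution $\vp\mapsto\int\vp\,d\mu$.
\begin{enumerate}
\item Transfer invariance to the measure $\nu$. Consider the distribution $T=\tilde\mu/w$, defined through the smooth density $w>0$, so that $\mu=T\,\nu$. Invariance then says that $\L^{\ast,\nu}T=0$ in the sense of distributions, i.e.\ $\langle T,\L\vp\rangle_\nu=0$ for all $\vp\in C^\infty_c(M)$.
\item Apply hypoellipticity. The operator $\L^{\ast,\nu}$ is elliptic with smooth coefficients, as noted above. By elliptic regularity (Weyl's lemma, or Hörmander hypoellipticity), every distributional solution of $\L^{\ast,\nu}T=0$ is a smooth function. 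So $d\mu=u\,d\nu$ with $u\in C^\infty(M)$, $u\ge0$, and $\L^{\ast,\nu}u=0$ classically.
\item Upgrade to strict positivity. Since $\mu\neq0$, $u\not\equiv0$. Written in local coordinates, $\L^{\ast,\nu}u=0$ is a linear elliptic equation $a^{ij}\partial_{ij}u+b^i\partial_iu+cu=0$ with a zeroth-order term $c$ of indefinite sign. The strong maximum principle for nonnegative solutions still applies in this case: write $cu=c^+u-c^-u$ and use $-c^+u\le0$, so $u$ is a nonnegative supersolution of an operator with $c\le0$. Alternatively, use the Harnack inequality for nonnegative solutions. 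Either way, the zero set of $u$ is open. It is also closed, and $M$ is connected, so $u>0$ everywhere.
\end{enumerate}

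Part (c) follows from (a) and (b). By (b), $d\mu=u\,d\nu$, so $\mu(M)=\int_M u\,d\nu$, which is finite if and only if $u\in L^1(M,\nu)$. Dividing by $\mu(M)$ normalizes $\mu$ to a probability measure.

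The main obstacle is step 2 of part (b), the regularity of a measure-valued solution. I would cite the standard hypoellipticity theorem for elliptic operators with $C^\infty$ coefficients, applied in coordinate charts. The solution only needs to be a distribution, so no a priori integrability beyond local finiteness of $\mu$ is required. The positivity step also needs some care because of the zeroth-order term $u\div X$-type coefficient; the $c^\pm$ splitting or the Harnack inequality handles it.
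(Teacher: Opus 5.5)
Your proposal is correct and follows the paper's proof: the formal-adjoint identity for (a), with the justification that only $\varphi$ needs compact support, Weyl/hypoelliptic regularity for (b), and (c) as an immediate consequence. Your step 3 in (b) is the one addition. There you upgrade $u\ge 0$ to $u>0$ by splitting $c=c^+-c^-$ and applying the strong maximum principle on connected $M$; the paper simply asserts $u>0$ at this point, and your argument is the one it uses later for the positivity of $w$ in Lemma~\ref{lemma:ExDirPb}.
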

 
{\begin{proof}
We include a quick proof for the sake of completeness.\smallskip

\noindent(a)  Take $\vp \in C^{\infty}_{c}(M)$ and note that, since $u$ is smooth, then by the theory of distributions
 \[
 \int_{M} \L\vp \, d\mu = \int_{M} u \L \vp \,  d\nu = \int_{M}\vp \L^{\ast,\nu} u \, d\nu.
  \]
Therefore, $d\mu = u \, d\nu$ is invariant for $\L$ if and only if \eqref{ad-equation} is satisfied.\smallskip

\noindent (b) Let the positive Radon measure $\mu$ be invariant for $\L$. Since $\L^{\ast,\nu}$ is elliptic with smooth coefficients, by (Weyl) elliptic regularity, the distributional solution $\mu$ of $L^{\ast,\nu}\mu=0$ must be a smooth function, namely, $d\mu=ud\nu$ for some smooth density $u>0$. It then follows that $u$ solves \eqref{ad-equation}.\smallskip

\noindent (c) It is a trivial consequence of (a) and (b).
\end{proof}

\begin{example}[Symmetry and invariant measures]
If $\mu$ is a measure on $M$ with respect to which an operator $\L$ is symmetric and $\L 1 =0$, then $\mu$ is invariant for $\L$. For instance, the Riemannian measure $\vol$ is invariant for the Laplace-Beltrami operator $\Delta$. The {\it Gaussian-like} measure $d\vol_f = e^{-f}d\vol$ is invariant for the $f$-Laplacian $\Delta_{f}$.
\end{example}

\begin{example}[$X$-invariant measures]\label{example:X-inv}
On the Riemannian manifold $(M,\metric,\vol)$, let us consider the $X$-Laplacian $\L=\Delta_X$. We say that the measure $\mu$ is $X$-invariant if it is an invariant measure for $\Delta_X$. In this case, by Lemma \ref{lemma-abstractexistence}, $d\mu= u \dv$ for a smooth function $u>0$. We write $u=e^{-f}$ so that $\mu = \vol_f$. The invariance condition therefore reads, for every $\vp \in C^\infty_c(M)$,
\begin{align*}
    0 &= \int_M\Delta_X \vp \dv_f \\
    &= \int_M \left( \Delta \vp - \<X, \nabla \vp\>\right)\dv_f \\
    &= \int_M  \Delta_f \vp \dv_f - \int_M\<X-\nabla f,\nabla \vp\>\dv_f \\
    &= \int_M  \vp \div_f(X-\nabla f)\dv_f.
\end{align*}
It follows that
\begin{equation}\label{eq:X-inv}
\mu=\vol_f \text{ is $X$-invariant} \quad \Longleftrightarrow \quad \div_f(X-\nabla f) =0.    
\end{equation}
It is worth noting that, from this characterization and the $f$-divergence theorem, one can easily deduce the explicit expression of the formal adjoint $\Delta_X^{\ast, \mu}$. In fact, we have
\[
\Delta_X^{\ast, \mu} = \Delta_{X_\mu}
\]
with
\[
X_\mu = -X + 2 \nabla f.
\]
To see this, simply compute, for every $\vp,\psi \in C^\infty_c(M)$,
\begin{align*}
    \int_X \psi \Delta_X \vp \dv_f &= \int_M \psi \Delta_f \vp \dv_f - \int_M \< X - \nabla f, \nabla \vp \> \psi \dv_f \\
    &= \int_M \vp \Delta_f \psi \dv_f + \int_M \vp \div_f( \psi (X- \nabla f)) \dv_f \\
    &= \int \vp( \Delta\psi -\<\nabla \psi, \nabla f\>)\dv_f
    +\int_M \vp \<\nabla \psi,X-\nabla f\> \dv_f \\
    &= \int_M \vp ( \Delta \psi-\< \nabla \psi,-X + 2\nabla f\>)\dv_f \\
    &= \int_M \vp \Delta_{X_\mu} \psi \dv_f,
 \end{align*}
 as claimed.
\end{example}

If $\L$ generates a contractive semigroup $P_{t}$, then the notion of invariant probability measure can be introduced in terms of  this semigroup. Notice that this is the case for the $X$-Laplacian (see \cite{MPW}, Section 4). Let $C_{b}(M)$ denote the space of  bounded continuous functions.

\begin{definition}[Invariant probability measures for semigroups]
We say that a probability measure $\mu$ is invariant for $P_{t}$ if $P_{t}^\ast\mu = \mu$, namely, for every $\vp \in C_{b}(M)$,
\[
\int_{M}P_{t}\vp \, d\mu = \int_{M} \vp\, d\mu.
\]
\end{definition}

The semigroup viewpoint on  invariant probability measures gives interesting connections with the {\it conservative property}, also called {\it stochastic completeness} or {\it Markovian property}.

\begin{definition}[Conservativeness]
 The semigroup $P_{t}$ is conservative (equivalently, $P_{t}$ is stochastically complete/Markovian) if
 \[
P_{t} 1 = 1,\quad \forall t > 0.
\]
\end{definition}

Applying the definition of invariant probability measure with $\vp \equiv 1$ and using the sub-Markovian property of $P_t$, we obtain the following

\begin{lemma}\label{lemma-stochcompl}
If an invariant probability measure for the semigroup $P_t$ generated by $\L$ exists, then it is conservative. 
\end{lemma}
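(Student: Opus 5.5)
The plan is to test the definition of invariant probability measure against the constant function $\vp \equiv 1$, which belongs to $C_b(M)$, and then use the sub-Markovian property of $P_t$ to upgrade an integral identity to a pointwise one.

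First, let $\mu$ be an invariant probability measure for $P_t$. Choosing $\vp \equiv 1$ in the definition gives, for every $t>0$,
\[
\int_M P_t 1 \, d\mu = \int_M 1 \, d\mu = \mu(M) = 1 .
\]

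Second, the semigroup generated by $\L$ is sub-Markovian, so $0 \le P_t 1 \le 1$ on $M$. Hence $1 - P_t 1 \ge 0$, while the identity above gives $\int_M (1 - P_t 1)\, d\mu = 0$. It follows that $P_t 1 = 1$ $\mu$-almost everywhere.

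Third, I upgrade this to every point of $M$. By Lemma~\ref{lemma-abstractexistence}(b), $d\mu = u\, d\nu$ with $u>0$ smooth and $\nu = w\dv$, $w>0$. Therefore $\mu$-null sets are exactly the $\vol$-null sets, and $P_t 1 = 1$ holds $\vol$-a.e. For $\L$ an elliptic operator with smooth coefficients (in particular the $X$-Laplacian), $P_t 1(x) = \int_M p_t(x,y)\, d\nu(y)$ with a smooth heat kernel. Thus $x \mapsto P_t 1(x)$ is continuous, indeed a smooth solution of the parabolic equation, and an a.e. identity between continuous functions holds everywhere. So $P_t 1 \equiv 1$ for every $t>0$, which is conservativeness.

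The only step that is not purely formal is this last one, namely the continuity of $P_t 1$, which rests on parabolic regularity of the minimal semigroup generated by $\L$. If one prefers to interpret conservativeness as an a.e. statement, the argument already ends after the second step. Otherwise I would invoke the standard smoothness of the minimal heat kernel obtained by exhaustion, as in \cite{MPW}.
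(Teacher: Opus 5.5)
Your first two steps are exactly the paper's argument: testing invariance with $\vp\equiv 1$ and using the sub-Markovian bound $0\le P_t1\le 1$ gives $P_t1=1$ $\mu$-a.e., and the paper stops there. Your extra upgrade to an everywhere identity is fine in spirit, but Lemma~\ref{lemma-abstractexistence}(b) concerns invariant measures for $\L$, not for $P_t$, so as written it is not directly available. The simplest fix is to show that $\mu$ has full support: positivity of the heat kernel gives $\int_M \vp\,d\mu=\int_M P_t\vp\,d\mu>0$ for every $0\le\vp\in C_c(M)$ with $\vp\not\equiv 0$. Hence the set $\{P_t1<1\}$, which is open by continuity of $P_t1$, is $\mu$-null and therefore empty.
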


It is well known that in the case of the $f$-Laplacian, the conservative property has an elliptic counterpart in terms of a Liouville property for bounded nonnegative solutions to $\Delta_f v \geq \lambda v$. For general elliptic operators, it is not obvious that this connection still survives.
On the other hand, as soon as the generator of the semigroup satisfies the above mentioned Liouville property, then the elliptic and the semigroup viewpoints on invariant measures coincide. To make everything precise we record the following

\begin{definition}[$\l$-Liouville]
We say that the elliptic operator $\L$ satisfies the $\l$-Liouville property (it is $\l$-Liouville for short) if for any $\l>0$ the only nonnegative, bounded solution of the problem
\[
\L u = \l u\quad  \text{on } \ M
\]
is a constant function.
\end{definition}

The following result can be culled from \cite[Proposition 5.9, Corollary 5.10]{MPW}.

\begin{proposition}[Conservativeness = $\l$-Liouville]
$P_{t}$ has the conservative property if and only if $\L$ is $\l$-Liouville for some, hence any, $\l >0$.
\end{proposition}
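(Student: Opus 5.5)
The plan is to prove the two implications separately. The semigroup side will be phrased through the resolvent $G_\l = \int_0^\infty e^{-\l t} P_t\,dt$, that is, the minimal solution operator of $(\l - \L)u = \vp$. Here $P_t$ is the minimal (sub-Markovian) semigroup generated by $\L$. It is obtained as the monotone limit of the Dirichlet semigroups $P_t^{\Omega_k}$ on an exhaustion $\Omega_k \uparrow M$ by relatively compact smooth domains. This is the construction recalled from \cite{MPW}, and we will use it freely.

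\emph{Key function.} We consider
\[
u_\l \doteq 1 - \l G_\l 1 = \l\int_0^\infty e^{-\l t}(1-P_t 1)\,dt .
\]
It satisfies $0\le u_\l\le 1$. Indeed, $u_\l$ is the increasing limit of $u^{(k)} = 1-\l G^{\Omega_k}_\l 1$, each of which solves $\L u^{(k)} = \l u^{(k)}$ in $\Omega_k$ with $u^{(k)}=1$ on $\partial\Omega_k$. Local elliptic estimates then give that $u_\l$ is a smooth, bounded, nonnegative solution of $\L u_\l=\l u_\l$ on $M$. Moreover, $u_\l$ is the maximal such solution bounded by $1$: any solution $0\le v\le 1$ is dominated by each $u^{(k)}$ by the maximum principle on $\Omega_k$, since $v\le 1 = u^{(k)}$ on $\partial\Omega_k$ and $\l>0$.

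\emph{Conservative $\Rightarrow$ $\l$-Liouville for every $\l$.} If $P_t1=1$, then $u_\l\equiv 0$. Let $v\ge0$ be a bounded solution of $\L v=\l v$. After rescaling we may assume $v\le1$, so maximality gives $0\le v\le u_\l=0$, and hence $v\equiv0$. Note that a nonzero constant $c$ does not solve $\L c=\l c$, because $\L 1=0$. So the only solutions are the trivial constant, and the property holds for all $\l>0$.

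\emph{$\l_0$-Liouville for one $\l_0$ $\Rightarrow$ conservative.} The function $u_{\l_0}$ is a bounded nonnegative solution, so the Liouville property forces it to be constant. As observed above, the only admissible constant is $0$, so $u_{\l_0}=0$. This gives
\[
\int_0^\infty e^{-\l_0 t}(1-P_t1)\,dt=0 .
\]
Since $1-P_t1\ge0$ and is continuous in $t$ (for fixed $x$), we get $P_t1=1$ for all $t$. The ``some, hence any'' clause then follows by passing through conservativeness. We can also argue directly with the resolvent identity, which shows that $u_\l=0$ for one $\l$ forces it for all $\l$.

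\emph{Main obstacle.} The delicate step is justifying that $u_\l$ is a genuine smooth solution on $M$ and is maximal. This requires the monotone convergence of the exhaustion resolvents, and the identification of $G_\l$ with the Laplace transform of the minimal semigroup for a \emph{non-symmetric} $\L$, where spectral theory is unavailable. I would handle it with the parabolic maximum principle on each $\Omega_k$, uniform $C^{2,\a}_{loc}$ Schauder bounds, and monotone convergence, following \cite[Section 4--5]{MPW}.
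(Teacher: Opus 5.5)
Your argument is correct. Note that the paper does not prove this proposition itself: it simply defers to \cite[Proposition 5.9, Corollary 5.10]{MPW}, so the comparison is between a citation and your explicit argument.

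Yours is the standard maximal-solution proof. The function $u_\l = 1-\l G_\l 1 = \l\int_0^\infty e^{-\l t}(1-P_t1)\,dt$ is the largest solution of $\L u=\l u$ with $0\le u\le 1$, and conservativeness is exactly its vanishing. It still draws on \cite{MPW} for the genuinely non-symmetric inputs: the minimal semigroup as the exhaustion limit of Dirichlet semigroups, and the identification of its resolvent with the Laplace transform. What it buys is transparency, since everything else reduces to the elliptic maximum principle.

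It also makes visible a hypothesis the statement leaves implicit, namely $\L 1 = 0$. Without it the equivalence is false: on a compact manifold, $\Delta + c$ with $0\ge c\not\equiv 0$ is $\l$-Liouville by the maximum principle, yet its semigroup loses mass.

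One small slip: by domain monotonicity $G^{\Omega_k}_\l 1$ increases with $k$, so $u^{(k)} = 1-\l G^{\Omega_k}_\l 1$ \emph{decreases} to $u_\l$ rather than increasing. This is harmless, since your maximality step only uses $v\le u^{(k)}$ for every $k$ and passage to the pointwise limit.
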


With this preparation, we have the following result (see \cite[Lemma 6.1]{MPW}).

\begin{proposition}[Elliptic = semigroup invariant measures]\label{prop-abstractexistence}
Let $\mu$ be a finite measure. Then the following hold.
\begin{itemize}
 \item [a)] If $\mu$ is an invariant probability measure for $P_{t}$, then $\L$ is $\l$-Liouville and $\mu$ is an invariant measure for $\L$.
 \item [b)] If $\mu$ is an invariant probability measure for $\L$ and $\L$ is $\l$-Liouville, then $P_{t}$ is conservative and $\mu$ is an invariant measure for $P_{t}$.
\end{itemize}
\end{proposition}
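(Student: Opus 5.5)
For a), I would pass from the semigroup to its generator by differentiating the invariance identity in time. For b), I would run the argument in reverse through the resolvent, using the $\l$-Liouville property to identify the minimal semigroup with the unique bounded one.

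\textbf{Part a).} Let $\mu$ be an invariant probability measure for $P_t$.
\begin{itemize}
\item Taking $\vp\equiv 1$ in the invariance identity gives $\int_M (1-P_t1)\,d\mu=0$. Since $0\le P_t1\le 1$ and $\mu$ has a smooth positive density (or at least full support), we get $P_t1=1$. This is Lemma \ref{lemma-stochcompl}.
\item By the preceding Proposition, conservativeness is equivalent to $\L$ being $\l$-Liouville.
\item To show $\mu$ is $\L$-invariant, fix $\vp\in C^\infty_c(M)$. For such $\vp$ we have $P_t\vp-\vp=\int_0^t P_s\L\vp\,ds$, pointwise and boundedly; this uses that $C^\infty_c$ lies in the domain of the generator of the minimal semigroup, and that $\L\vp\in C_b$.
\item Integrate against $\mu$, use Fubini, and apply invariance to $\L\vp\in C_b(M)$. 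This gives
\[
0=\int_M (P_t\vp-\vp)\,d\mu=\int_0^t\!\!\int_M \L\vp\,d\mu\,ds=t\int_M\L\vp\,d\mu .
\]
\end{itemize}

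\textbf{Part b).} Let $\mu$ be a finite invariant measure for $\L$ and assume $\L$ is $\l$-Liouville. Then $P_t$ is conservative by the Proposition above. It remains to show $\int P_t\vp\,d\mu=\int\vp\,d\mu$ for $\vp\in C_b$; by positivity and monotone class it suffices to treat $\vp\in C^\infty_c$, $\vp\ge0$.

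I will work with the resolvent $G_\l=\int_0^\infty e^{-\l t}P_t\,dt$. Let $v=G_\l\psi$, where $\psi=\l\vp-\L\vp$ for a given test function. Since $\L$ is $\l$-Liouville, the bounded solution of $\l v-\L v=\psi$ is unique, so $v=\vp$.

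The key step, and I expect it to be the main obstacle, is proving $\int_M \L w\,d\mu=0$ for the non-compactly supported bounded functions $w=G_\l\psi$. The plan:
\begin{itemize}
\item Approximate using the Dirichlet resolvents $G^{\Omega_k}_\l$ on an exhaustion $\Omega_k$. Write $w_k=G^{\Omega_k}_\l\psi\ge0$, which vanishes on $\partial\Omega_k$ and increases to $w$.
\item Green's formula on $\Omega_k$ against the density $u$ of $\mu$, which satisfies $\L^{\ast}u=0$, gives $\int_{\Omega_k}\L w_k\,d\mu=\int_{\partial\Omega_k}(\text{conormal derivative of }w_k)\,u\le0$. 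The sign holds because $w_k\ge0$ vanishes on the boundary, so its outward conormal derivative is nonpositive.
\item Hence $\l\int w_k\,d\mu\le\int\psi\,d\mu$, and letting $k\to\infty$ yields $\l\int G_\l\psi\,d\mu\le\int\psi\,d\mu$.
\end{itemize}

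To turn this inequality into equality I will use conservativeness. Since $\L 1=0$ and the Liouville property gives $\l G_\l1=1$, applying the inequality to both $\psi$ and $\|\psi\|_\infty-\psi$, and using $\mu(M)<\infty$, forces equality. Concretely:
\begin{itemize}
\item First, $\l\int G_\l h\,d\mu\le\int h\,d\mu$ for all bounded $h\ge0$.
\item Applying this to $c-h$ and using $\l G_\l c=c$ gives the reverse inequality, so $\l\int G_\l h\,d\mu=\int h\,d\mu$.
\end{itemize}

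Finally, invariance of $\mu$ under the resolvent, $\int \l G_\l h\,d\mu=\int h\,d\mu$ for all $\l>0$, transfers to the semigroup. One can argue by uniqueness of the Laplace transform: $t\mapsto\int P_th\,d\mu$ is bounded and continuous, and its Laplace transform equals $\l^{-1}\int h\,d\mu$ for every $\l>0$. Alternatively, use Post–Widder/Yosida, $P_t=\lim_n (\tfrac nt G_{n/t})^n$ strongly, with dominated convergence justified by $\mu(M)<\infty$.
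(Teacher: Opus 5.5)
Your argument is correct. The paper gives no proof of its own here: the statement is quoted from \cite[Lemma 6.1]{MPW}, and the equivalence ``conservative $\Leftrightarrow$ $\l$-Liouville'' is quoted from \cite[Proposition 5.9, Corollary 5.10]{MPW}. What you supply is a self-contained version of the standard argument. Its merit is that it shows exactly where each hypothesis enters in b). The Dirichlet approximations $w_k=G^{\Omega_k}_\l\psi$, together with the sign of the conormal derivative, show that \emph{any} $\L$-invariant measure, finite or not, is sub-invariant for the minimal resolvent. The $\l$-Liouville property is needed only to get $\l G_\l 1=1$; this follows directly, since $1-\l G_\l 1$ is a bounded nonnegative solution of $\L w=\l w$. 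Finiteness of $\mu$ is needed only in the complement trick that upgrades the inequality to an equality.

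A few points should be tightened.

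In a), the appeal to a smooth positive density of $\mu$ is circular at that stage, because you do not yet know that $\mu$ is $\L$-invariant. You can fix this in either of two ways. One is to reorder: your proof that $\int_M\L\vp\,d\mu=0$ does not use conservativeness, and afterwards Lemma~\ref{lemma-abstractexistence}(b) provides the density. The other is to observe that $\mu(A)=\int_M P_t1_A\,d\mu$ and $p_t>0$ force $\mu\ll\vol$ with an everywhere positive density, and then use the continuity of $P_t1$ (strong Feller).

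The identity $P_t\vp-\vp=\int_0^tP_s\L\vp\,ds$ is best justified through the Dirichlet semigroups on $\Omega_k\supset\supp\vp$, where it is classical, followed by monotone convergence of the kernels and dominated convergence. The minimal semigroup is not strongly continuous on $C_b$, so ``domain of the generator'' needs care. The same identity also gives the Lipschitz continuity of $t\mapsto\int_M P_t\vp\,d\mu$, which you need for the Laplace inversion.

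Finally, in b) the opening claim $G_\l(\l\vp-\L\vp)=\vp$ is never used and can be dropped. If you keep it, note that the paper's $\l$-Liouville property concerns nonnegative solutions. Uniqueness among signed bounded solutions then needs a short extra argument, for instance via $w_+$ being a bounded nonnegative subsolution.
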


\begin{remark}
Concerning b) it is possible to show that, without $\l$-Liouville, the implication does not hold. Indeed, it is proved in \cite{Sh} that, for suitable choices of the vector field $X$, there are many invariant probability measures for $\Delta_{X}$, in contrast to Doob uniqueness result (see Theorem~\ref{th-Doob} below). In fact, elaborating on \cite{BRS}, Shaposhnikov \cite{Sh} introduces a machinery to produce invariant probability measures. See also Example \ref{sharpness_yau} below.
\end{remark}

\subsection{Existence of invariant measures for the $X$-Laplacian}

The existence of invariant measures for general elliptic operators is related to the existence of positive solutions to the equation \eqref{ad-equation} governed by the dual operator, as shown in Lemma~\ref{lemma-abstractexistence}. Abstractly, for general divergence-type operators, the latter problem can be investigated in terms of the validity of the (weak) maximum principle (see \cite[Corollary~2.2.3]{BKRS_Book}). Given a smooth vector field $X$ defined on a manifold $M$, the existence of an $X$-invariant measure boils down to the existence of a global positive solution to the Fokker-Planck-Kolmogorov equation.

\begin{theorem}\label{existence_inv_measure}
Let $M$ be a Riemannian manifold and $X$ be a smooth vector field on $M$. Then, there is a  solution $0<u \in C^\infty(M)$ to the equation
\begin{equation}\label{eq:FP}
\div(\nabla u + uX) = 0.    
\end{equation}
\end{theorem}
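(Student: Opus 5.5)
We construct $u$ by exhausting $M$ with relatively compact domains, solving the adjoint equation with positive boundary data on each domain, normalizing at a fixed point, and passing to the limit via Harnack's inequality. If $M$ is compact, the classical theory suffices: by the Fredholm alternative, $\ker \Delta_X^\ast$ has the same dimension as $\ker\Delta_X$, which consists of the constants by the maximum principle. So there is a nontrivial solution $u$ of $\Delta_X^\ast u=0$. That $u$ can be chosen positive follows from Krein--Rutman, or from the fact that the principal eigenfunction of $\Delta_X^\ast$ is positive with eigenvalue $0$, since the principal eigenvalue of $\Delta_X$ is $0$ with eigenfunction $1$. The main work is the noncompact case.

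Fix an origin $o$ and an exhaustion $\Omega_1\Subset\Omega_2\Subset\cdots$ of $M$ by smooth, relatively compact, connected domains. On each $\Omega_k$ we solve the Dirichlet problem
\[
\Delta_X^\ast u_k = \Delta u_k + \langle X,\nabla u_k\rangle + (\div X)\,u_k = 0 \ \text{in } \Omega_k,\qquad u_k = 1 \ \text{on } \partial\Omega_k .
\]
The zeroth-order coefficient $\div X$ has no sign, so solvability and positivity cannot come from the usual maximum principle. Instead we use the divergence structure $\Delta^\ast_X u=\div(\nabla u+uX)$. The key observation is that $\Delta_X$ has no zeroth-order term and so satisfies the weak maximum principle on $\Omega_k$. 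Hence its principal Dirichlet eigenvalue $\lambda_1(-\Delta_X,\Omega_k)$ is strictly positive. The principal eigenvalue of the adjoint is the same, so $\lambda_1(-\Delta_X^\ast,\Omega_k)>0$. By the Berestycki--Nirenberg--Varadhan theory, or equivalently \cite[Cor.~2.2.3]{BKRS_Book}, $-\Delta^\ast_X$ then satisfies the maximum principle on $\Omega_k$. Therefore the Dirichlet problem is uniquely solvable by Fredholm theory, and $u_k>0$ in $\Omega_k$ by the strong maximum principle. I expect this step, transferring positivity of the principal eigenvalue from $\Delta_X$ to its adjoint, to be the main obstacle.

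Next we normalize, setting $v_k=u_k/u_k(o)$. Each $v_k$ is a positive solution of a uniformly elliptic equation with smooth coefficients on $\Omega_k$, and $v_k(o)=1$. Harnack's inequality on compact subsets, applied along chains of balls in the connected manifold, gives locally uniform two-sided bounds $C_K^{-1}\le v_k\le C_K$ on each compact $K$ for $k$ large. Interior Schauder estimates then bound $v_k$ in $C^{2,\alpha}(K)$. A diagonal Arzelà--Ascoli argument yields a subsequence converging in $C^2_{loc}$ to some $u\ge0$ with $u(o)=1$ and $\div(\nabla u+uX)=0$ on $M$. By the Harnack bounds, $u\ge C_K^{-1}>0$ on each compact set. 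Elliptic regularity gives $u\in C^\infty(M)$, which proves the claim. Via Lemma~\ref{lemma-abstractexistence}, the measure $u\dv$ is then $X$-invariant.
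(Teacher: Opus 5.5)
Your proof is correct, and its global scheme matches the paper's: Dirichlet problems for $\Delta_X^\ast$ with boundary value $1$ on an exhaustion, normalization at $o$, Harnack chaining, interior Schauder bounds, a diagonal Arzel\`a--Ascoli argument, and elliptic regularity. The difference is in the local step you flagged as the main obstacle.

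You get solvability and positivity on $\Omega_k$ from principal eigenvalue theory: $\lambda_1(-\Delta_X,\Omega_k)>0$, the equality of the principal Dirichlet eigenvalues of an operator and its adjoint (Krein--Rutman), and the Berestycki--Nirenberg--Varadhan characterization of the maximum principle.

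The paper instead transfers the maximum principle from $\Delta_X$ to $\Delta_X^\ast$ by an explicit duality argument, which is in effect a hands-on proof of exactly the fact you cite. Writing $w=z+1$, it pairs $z$ in Green's identity with solutions of $-\Delta_X v=\varphi$, $v=0$ on $\partial U$. This gives uniqueness for the homogeneous problem, hence existence by the Fredholm alternative. The same identity gives $-\int_U w\varphi\,d\mathrm{vol}=\int_{\partial U}\partial_n v\,d\sigma\le 0$ for $\varphi\ge 0$, since $v\ge 0$ forces $\partial_n v\le 0$; so $w\ge 0$. Strict positivity then follows from the strong maximum principle after splitting $\div X=c^+-c^-$.

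The paper's route is self-contained, relying only on standard Schauder and maximum-principle results for operators without a zeroth-order term. Yours is shorter and more conceptual, isolating the fact that positivity of the principal eigenvalue passes to the adjoint, but it imports Krein--Rutman and the BNV theory.

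Your separate treatment of compact $M$ is a genuine refinement. On a closed manifold, a nested exhaustion by relatively compact smooth domains must reach $M$ itself. There the paper's Dirichlet lemma breaks down, because it needs to solve $-\Delta_X v=\varphi$ with zero boundary data for arbitrary $\varphi$. So the paper's argument tacitly assumes $M$ is noncompact.
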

\begin{proof} In the case where $M=\mathbb{R}^m$ the conclusion follows from \cite[Corollary 2.2.3]{BKRS_Book}. Since the arguments therein are local, they extend to manifolds  using coordinate charts and partitions of unity. We provide a detailed proof for  completeness and the convenience of the reader. 

We begin proving the following
\begin{lemma}\label{lemma:ExDirPb}Let $U$ be a relatively compact open set with smooth boundary. Then there exists a unique function 
$w\in C^\infty(\overline{U})$ such that 
\begin{equation}\label{eq_u-in-U}
\Delta^*_Xw =\div(\nabla w + w X) = 0, \ \ w>0 \ \text{in } \ U \quad \text{and} \quad w\equiv 1 \ \ \text{on} \ \partial U. 
\end{equation}
\end{lemma}
Indeed, set $w=z+1$ so that \eqref{eq_u-in-U} is equivalent to $\Delta_X^*z=-\Delta_X^* 1 = -\div X$ in $U$ and $z=0$ on $\partial U$. By the Fredholm alternative for the Dirichlet problem \cite[Theorem~6.15]{GT}, this problem is uniquely solvable in $C^{2,\alpha}(\overline U)$ for every right-hand side provided the homogeneous problem has only the trivial solution; smoothness up to the boundary then follows from  Schauder regularity, since all data are smooth.

Thus, suppose  $z\in C^{2,\alpha}(\overline\Omega)$ satisfies $\Delta_X^*z=0$ in $\Omega$ and $z=0$ on $\partial\Omega$. To show that $z\equiv 0$,   fix $f\in C_c^\infty(\Omega)$ and let $v$ be the
solution of
\begin{equation}\label{eq:v}
 -\Delta_X v=f\ \text{in }\Omega,\qquad v=0\ \text{on }\partial\Omega ,
\end{equation}
which exists by \cite[Theorem~6.14]{GT} because $\Delta_X$ has no zero-order term. 
Integrating the identity  
\begin{equation}\label{eq:green} 
z\Delta_Xv -v\Delta_X^*z=\div( z\nabla v-v\nabla z -vzX),
\end{equation} 
applying the divergence theorem and noting that the  boundary terms vanish
since $v=z=0$ on $\partial U$, yield
\[
 -\int_U z\,fd\vol=\int_U z\,\Delta_X vd\vol=\int_U v\,\Delta_X^*zd\vol=0 .
\]
As $f$ was arbitrary, $z\equiv0$, as required.

To show that $w\geq 0$ in $\overline{U}$, let $f\in C_c^\infty(U)$ with $f\geq0$ and let $v$ solve \eqref{eq:v}.  Since $\Delta_Xv=-f\leq0$ and $v=0$ on $\partial U$, 
by the weak maximum principle, which holds since $\Delta_X$ has no zero-order term, 
$v\geq0$ in $U$ 
and therefore $\partial_n v\leq0$ on
$\partial U$.  

Integrating \eqref{eq:green} with $w$ instead of $z$ and using
$\Delta_X^*w=0$, $v|_{\partial U}=0$, $w|_{\partial U}=1$ gives
\[
 -\int_U w\,fd\vol
 =\int_U w\,\Delta_X vd\vol
 =\int_{\partial U} w\,\partial_n v\,d\sigma
 =\int_{\partial U}\partial_n v\,d\sigma\leq0, 
\]
and $w\geq0$ in $U$.

Finally, we show that $w>0$.  Write $c\doteq\div X=c^+-c^-$.  Since $w\geq0$,
the equation $\Delta_X^*w=0$ can be rewritten as
\[
 \Delta w+\langle X,\nabla w\rangle-c^-w=-c^+w\leq0 \qquad\text{in }\Omega ,
\]
so $w$ is a nonnegative supersolution of the operator
$\Delta+\langle X,\nabla\,\cdot\,\rangle-c^-$, whose zero-order coefficient
$-c^-$ is nonpositive.  If $w$ vanished at some interior point, it would
attain a nonpositive minimum in the connected set $U$, and the strong
maximum principle \cite[Theorem~3.5]{GT} (applied to $-w$) would force
$w\equiv0$ in $U$, contradicting $w=1$ on $\partial U$ and
continuity up to the boundary.  Hence $w>0$ in $U$, and also on
$\partial U$. \qed

\smallskip
Let $U_1\Subset U_2\Subset\cdots$
$\bigcup_k U_k=M$ be an exhaustion of $M$ by connected relatively compact open sets with smooth
boundary, and fix $o\in U_1$.  By Lemma~\ref{lemma:ExDirPb}, for every $k$ there is
$w_k\in C^\infty(\overline{U_k})$, $w_k>0$, with $\Delta_X^*w_k=0$ in
$U_k$. Put
\[
 u_k\doteq\frac{w_k}{w_k(o)},\qquad\text{so that } u_k>0,\quad \Delta_X^*u_k=0
 \text{ in } U _k,\quad u_k(o)=1 .
\]
We claim that for fixed $j$ and $k>j$ $u_k$ is uniformly bounded above and away from $0$ in $U_j.$ Indeed, $u_k$ is a positive
solution of the divergence-form equation with smooth coefficients $\div(\nabla u_k+u_kX)=0$ on
$U_{j+1}\Supset\overline{U_j}$.  The
interior Harnack inequality \cite[Theorem~8.20]{GT} holds on coordinate
balls contained in $U_{j+1}$, and chaining finitely many such balls
along the compact connected set $\overline{U_j}$ yields a constant
$C_j$, independent of $k$, such that
\begin{equation}\label{eq:Harnack}
 \sup_{\overline{U_j}}u_k\leq C_j\inf_{\overline{U_j}}u_k
 \leq C_j\,u_k(o)=C_j,
 \qquad
 \inf_{\overline{U_j}}u_k\geq C_j^{-1}u_k(o)=C_j^{-1},
\end{equation}
 Interior Schauder estimates
\cite[Theorem~6.2]{GT} applied on $U_{j+1}$ then give
\[
 \|u_k\|_{C^{2,\alpha}(\overline{U_j})}
 \leq C'_j\sup_{U_{j+1}}u_k\leq C'_jC_{j+1}
 \qquad(k>j+1),
\]
with $C'_j$ independent of $k$.

The Ascoli-Arzel\`a  theorem and a diagonal argument show that there is a subsequence $(u_{k_i})$ converging in $C^2(\overline{U_j})$ for every $j$.  The limit $u$ belongs to $C^2(M)$, satisfies $\Delta_X^*u=0$ on $M$ and $u(o)=1$, and by \eqref{eq:Harnack} it satisfies $u\geq C_j^{-1}>0$ on $\Omega_j$ for every
$j$; hence $u>0$ on $M$.  Finally, $u$ smooth by 
interior regularity (see \cite[Theorem~6.17]{GT}).

\end{proof}
    
It is worth pointing out that the existence of invariant measures can be obtained straightforwardly via the {\it criticality theory} for generic elliptic operators satisfying $\L 1 =0$.\smallskip

Given an elliptic operator $\L$ on $M$, define the convex cone
\[
\CC_{\L} = \{ u \in C^{2}(M): \L u =0, u>0\}.
\]
Following Y. Pinchover, \cite{Pi}, we say that $\L$ is:
\begin{itemize}
 \item [(a)]  {\it supercritical} if $\CC_{\L} = \{ u \in C^{2}(M): \L u =0, u>0\} = \emptyset$;
 \item [(b)] {\it critical} if $\CC_{\L} \not= \emptyset$ but $\L$ has no positive Green function;
 \item [(c)] {\it subcritical} if $\L$ has a positive Green function. In particular, $\CC_{\L} \not=\emptyset$.
\end{itemize}
Clearly, if $\L 1 =0$ then $\CC_{\L} \not=\emptyset$ and, therefore, $\L$ is either subcritical or critical. On the other hand, $\L$ is sub/super/critical  if and only if so is its formal adjoint $\L^{\ast}$ (see e.g. \cite[Section 4.3]{Pin}). Since $\L 1 = 0$, it follows that $\CC_{\L^\ast} \not= \emptyset$ and, consequently, there is an invariant measure for $\L$. \smallskip

Unlike general invariant measures, invariant probability measures need not exist.

\begin{example}[Nonexistence in nonnegative curvature]\label{ex_nonexistence}
 Let $(M,\metric ,\vol)$ be a complete, noncompact, Riemannian manifold with nonnegative Ricci curvature $\ric \geq 0$. Then, there exists no invariant probability measure  $d\mu = u \, d\vol$ for the Laplace-Beltrami operator. Indeed, we know from Calabi and Yau that $\vol(M) = +\infty$. Moreover,  since $\Delta^{\ast} = \Delta$, invariance of $\mu$ is equivalent to the fact that $u>0$ is harmonic. By Cheng-Yau gradient estimates, $u$ is constant and, hence, $\mu(M)=+\infty$.
\end{example}

\subsection{Uniqueness of invariant measures for the $X$-Laplacian}
From the elliptic point of view, uniqueness of the invariant measure boils down to the following Liouville type results for the adjoint operator. In view of Lemma \ref{lemma-abstractexistence} its proof is trivial. Nevertheless, it is useful to give it a formal statement.

\begin{lemma}\label{th-unique-liouville}
Let $(M,\metric, \nu )$ be a smooth metric measure space. Consider the following Liouville type properties:
\begin{itemize}
 \item [(L1)] Any  solution $0 \leq u \in C^{\infty}(M)$ of the equation $\L^{\ast,\nu} u = 0$ is constant.\smallskip
 \item [(L2)]  Any smooth solution $0 \leq u \in L^{1}(M,d\nu)$ of the equation  $\L^{\ast,\nu} u =0$ is constant.\medskip
\end{itemize}

\noindent If {\rm (L1)} holds, then any two invariant measures for $\L$ are multiple of each other, so there is essentially only one  invariant measure for $\L$ that is absolutely continuous with respect to $\nu$.\\
If {\rm (L2)} is satisfied, then there exists at most one invariant probability measure for $\L$ that is absolutely continuous with respect to $\nu$.
\end{lemma}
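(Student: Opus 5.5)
The plan is to reduce both assertions to Lemma~\ref{lemma-abstractexistence}, which identifies invariant measures for $\L$ with positive smooth solutions of $\L^{\ast,\nu}u=0$.

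For the first assertion, assume (L1) and let $\mu_1,\mu_2$ be two invariant measures for $\L$ that are absolutely continuous with respect to $\nu$. By Lemma~\ref{lemma-abstractexistence}(b), we may write $d\mu_i=u_i\,d\nu$ with $0<u_i\in C^\infty(M)$ and $\L^{\ast,\nu}u_i=0$. Each $u_i$ is a nonnegative smooth solution of the adjoint equation, so (L1) gives $u_i\equiv c_i>0$. Hence $\mu_2=(c_2/c_1)\,\mu_1$, as claimed.

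For the second assertion, assume (L2) and let $\mu_1,\mu_2$ be invariant probability measures for $\L$, absolutely continuous with respect to $\nu$. Lemma~\ref{lemma-abstractexistence}(c) gives densities $0<u_i\in C^\infty(M)\cap L^1(M,\nu)$ solving $\L^{\ast,\nu}u_i=0$. By (L2) each $u_i$ is a constant $c_i$. The normalization $\int_M u_i\,d\nu=1$ then gives $c_i\,\nu(M)=1$. This forces $\nu(M)<\infty$ and $c_1=c_2=1/\nu(M)$, so $\mu_1=\mu_2$. If $\nu(M)=\infty$, no such measure exists at all, which is consistent with the claim of \emph{at most} one.

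The only delicate point is the regularity step: an invariant measure, a priori just a Radon measure, must have a smooth density. This is supplied by the Weyl regularity already recorded in Lemma~\ref{lemma-abstractexistence}(b), and absolute continuity with respect to $\nu$ is automatic from there. Beyond this, the argument is a direct bookkeeping exercise.
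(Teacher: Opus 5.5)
Your proof is correct and follows the same route as the paper, which simply calls the lemma trivial in view of Lemma~\ref{lemma-abstractexistence}; you supply the bookkeeping: densities are smooth nonnegative solutions of $\L^{\ast,\nu}u=0$, the Liouville property makes them constant, and in the $L^1$ case the normalization $c\,\nu(M)=1$ fixes the constant. You never actually need strict positivity of the densities, since (L1) and (L2) are stated for $u\ge 0$ and a nonzero invariant measure has a nonzero constant density.
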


\begin{example}[$f$-Laplacian and $L^{1}$-Liouville]\label{ex_f-laplacian_uniqueness}
 Let $\L = \Delta_{f}$ be the weighted Laplacian on the stochastically complete weighted manifold $M_f$ and let  $\nu = \vol_f $ be its weighted measure. Then, the Liouville property $(L2)$ is satisfied. Indeed, by an observation of A. Grigor'yan, \cite[Theorem 3]{Gr}, $\Delta_{f}$ has the $L^{1}$-Liouville  property for superharmonic functions, which, in turn, implies $(L2)$. The same conclusion holds if we replace {\it stochastic completeness} with the {\it existence of an invariant probability measure $\mu$ for the semigroup $P^f_t$}. This is a consequence of Doob uniqueness result, Theorem \ref{th-Doob}. Alternatively, one can recall that, by Lemma \ref{lemma-stochcompl}, $M_f$ is stochastically complete, and then proceed as above.
\end{example}

Let $X$ be a smooth vector field on a Riemannian manifold $(M,\metric )$ and let $\mu$ be an $X$-invariant measure. It follows from Example \ref{example:X-inv} that $\mu$ is the unique $X$-invariant measure provided $M$ is parabolic for the dual operator $\Delta_X^{\ast,\mu} = \Delta_{X_\mu}$, where $X_\mu = -X + 2\nabla f$. On the other hand, as we shall see in Proposition \ref{equiv_green_Xparab}, $M$ is $X$-parabolic if and only if it is $X_\mu$-parabolic. Therefore, we obtain the following
\begin{proposition}\label{prop:uniqueness-Xinvariant}
    Let $(M,\metric)$ be a Riemannian manifold endowed with an invariant measure $\mu$ for the drifted Laplacian $\Delta_X$. If $M$ is $X$-parabolic then $\mu$ is the essentially unique invariant measure for $\Delta_X$.
\end{proposition}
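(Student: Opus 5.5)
Let $d\mu = e^{-f}\dv$ be the given $X$-invariant measure, as in Example~\ref{example:X-inv}. Let $\tilde\mu$ be any other $X$-invariant measure. By Lemma~\ref{lemma-abstractexistence}(b), $d\tilde\mu = \tilde u\,\dv$ for some smooth $\tilde u>0$. Set $w \doteq \tilde u e^{f} > 0$, so that $d\tilde\mu = w\,d\mu$ with $w$ smooth.

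The first step is to translate the invariance of $\tilde\mu$ into an equation for $w$. For every $\vp\in C^\infty_c(M)$,
\[
0=\int_M \Delta_X\vp\, d\tilde\mu=\int_M w\,\Delta_X\vp\, d\mu=\int_M \vp\,\Delta_X^{\ast,\mu} w\, d\mu .
\]
Since $w$ is smooth, this gives $\Delta_X^{\ast,\mu}w=0$ pointwise. By the computation in Example~\ref{example:X-inv}, $\Delta_X^{\ast,\mu}=\Delta_{X_\mu}$ with $X_\mu=-X+2\nabla f$. Hence $w$ is a positive smooth solution of $\Delta_{X_\mu} w = 0$, and in particular $\Delta_{X_\mu}w\le 0$.

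The second step invokes Proposition~\ref{equiv_green_Xparab}, stated later in the paper, which asserts that $M$ is $X$-parabolic if and only if it is $X_\mu$-parabolic. Since $M$ is $X$-parabolic by assumption, $M$ is $X_\mu$-parabolic. Moreover $w>0$ lies in $C^0(M)\cap W^{1,2}_{loc}(M)$ and is $X_\mu$-superharmonic. By the definition of $X_\mu$-parabolicity, $w$ is constant, so $\tilde\mu = c\,\mu$ for some $c>0$. This is exactly the Liouville property (L1) of Lemma~\ref{th-unique-liouville} for $\L=\Delta_X$ with $\nu=\mu$, and essential uniqueness follows.

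The main obstacle is entirely the transfer of parabolicity from $\Delta_X$ to its $\mu$-dual $\Delta_{X_\mu}$; the remaining steps are formal. If Proposition~\ref{equiv_green_Xparab} were not available, I would prove this transfer through the Green function. A positive Green function of $\Delta_{X_\mu}$ with respect to $\mu$ is the transpose $g_X(y,x)$ of the Green function of $\Delta_X$. By the equivalence of items 1 and 2 in Theorem~\ref{thm_equiv_Xparab_intro}, applied to both $X$ and $X_\mu$ (noting that $\mu$ is also $X_\mu$-invariant, since $\Delta_{X_\mu}^{\ast,\mu}=\Delta_X$ and $\Delta_X 1=0$), non-existence of one Green function is then equivalent to non-existence of the other. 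This is the criticality symmetry between $\L$ and $\L^\ast$ recalled from \cite{Pin}: the absence of a positive Green function for $\Delta_X$ is equivalent to its absence for the adjoint.
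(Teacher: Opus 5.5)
Your proof is correct and is essentially the paper's argument. The density of any other invariant measure with respect to $\mu$ is a positive smooth solution of $\Delta_{X_\mu}w=0$, and $X$-parabolicity passes to $X_\mu$-parabolicity by Proposition~\ref{equiv_green_Xparab} (proved there by the transposition $g_X(x,y)=g_{X_\mu}(y,x)$, as in your sketch), which forces $w$ to be constant.
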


\begin{example}[Nonuniqueness in negative curvature]
Let $(M,\metric,\vol)$ be a Riemannian manifold such that the cone $\H_{+}(M)$ of positive harmonic functions is non-trivial. Then  uniqueness of the invariant measure $d\mu = u  \dv$ for the Laplace-Beltrami operator fails. This follows from Lemma \ref{lemma-abstractexistence} with $\Delta^{\ast} = \Delta$ and $u \in \H_{+}(M)$. For instance, if $M$ is a Cartan-Hadamard manifold\footnote{A complete, simply connected Riemannian manifold with nonpositive sectional curvatures.} and $-B^{2} \leq \sect \leq -A^{2}$ for some $A,B>0$, then the space of bounded harmonic functions $\H_{b}(M)$, and hence  $\H_{+}(M)$, is infinite dimensional, \cite{An, Su}. For another example, consider a complete Riemannian manifold $(M,\metric)$ with at least two ends\footnote{An end of a complete manifold $M$ with respect to a smooth compact domain $\Omega$ is any of the unbounded connected components of $M\ \backslash \Omega$.} one of which is non-parabolic\footnote{An end $E$ of $M$ is parabolic if its Riemannian double $\D(E)$ is a parabolic manifold without boundary.}. Then $\H_{+}(M)$ is nontrivial (cf. \cite{LT}). This happens, e.g., if $M \hookrightarrow\rr^N$ is a complete minimal submanifold of dimension $m \geq 3$ with two ends, \cite{CSZ}, and, more generally, if the mean curvature of the immersion is suitably controlled, \cite{PRS}.
\end{example}

In the setting of probability measures, using the viewpoint of semigroups, uniqueness of invariant measures has been obtained under the validity of a {\it strong Feller property}. 
We shall denote by $B_{b}(M)$ the space of bounded Borel measurable functions.

\begin{definition}
The semigroup $P_{t}$ is said to be:
\begin{itemize}
 \item Strongly Feller if $P_{t}(B_{b}(M)) \subseteq C_{b}(M)$;
 \item Irreducible if $P_{t}1_{B_{r}}>0$ for every open ball $B_{r}$.
\end{itemize}
\end{definition}

The following Proposition provides an interesting class of operators generating irreducible and strongly Feller semigroups (see e.g. \cite [Corollary 4.7]{MPW}).

\begin{proposition}\label{prop-strongFeller}
 Let $P_{t}$ be the semigroup generated by the operator $\Delta_{X}$. Then $P_{t}$ has a positive kernel $p_{t}(x,y)$ and, in particular, it is irreducible. Moreover, $P_{t}$ is strongly Feller.
\end{proposition}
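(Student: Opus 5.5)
Since the drift $X$ is smooth, the plan is to realize $P_t$ as the minimal semigroup of $\Delta_X$, obtained as the limit of Dirichlet semigroups on an exhaustion, and to read both claims off the local parabolic theory for the operator $\partial_t-\Delta_X$. Concretely, I take a smooth exhaustion $U_1\Subset U_2\Subset\cdots$ of $M$ and let $p^{(k)}_t(x,y)$ be the Dirichlet heat kernel of $\Delta_X$ on $U_k$. This kernel exists and is smooth and nonnegative by classical parabolic theory for uniformly elliptic operators with smooth coefficients on bounded smooth domains. By the parabolic maximum principle, which applies because $\Delta_X$ has no zero-order term, the sequence $p^{(k)}_t$ is nondecreasing in $k$ and satisfies $\int_{U_k}p^{(k)}_t(x,y)\,d\mathrm{vol}(y)\le 1$. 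I set $p_t=\lim_k p^{(k)}_t$, so that $P_t\varphi(x)=\int_M p_t(x,y)\varphi(y)\,d\mathrm{vol}(y)$ is sub-Markovian; I then identify it with the semigroup in \cite{MPW}. Interior parabolic Harnack and Schauder estimates, uniform in $k$ on compact subsets of $(0,\infty)\times M\times M$, show that $p_t$ is smooth and solves $\partial_t p=\Delta_X p$ in $(t,x)$; the same holds for the adjoint $\Delta^*_X$ in the variable $y$.

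For positivity, I fix $(t,x,y)$ and choose $k$ with $x,y\in U_k$, where $U_k$ is connected. It then suffices to show $p^{(k)}_t(x,y)>0$. Since $y\mapsto p^{(k)}_s(\cdot,y)$ approximates a Dirac mass as $s\to0^+$, the function $(s,z)\mapsto p^{(k)}_s(z,y)$ is nonnegative and not identically zero on $(0,t)\times U_k$. The strong parabolic maximum principle (equivalently, the parabolic Harnack inequality chained along a path from $y$ to $x$ in $U_k$) then forces strict positivity at every later time. Hence $p_t\ge p^{(k)}_t>0$. Irreducibility follows immediately: for any ball $B_r$, $P_t1_{B_r}(x)=\int_{B_r}p_t(x,y)\,dy>0$.

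For the strong Feller property, I take $\varphi\in B_b(M)$ with $|\varphi|\le C$. The functions $P^{(k)}_t\varphi(x)=\int p^{(k)}_t(x,y)\varphi(y)\,dy$ solve the $\Delta_X$-heat equation in $(t,x)$ on $(0,\infty)\times U_k$ and are bounded by $C$. Interior parabolic Schauder estimates therefore give equicontinuity on compact sets, uniformly in $k$. Monotone convergence, applied separately to $\varphi^{\pm}$ and using $p^{(k)}_t\uparrow p_t$, gives $P^{(k)}_t\varphi\to P_t\varphi$ pointwise. By Ascoli–Arzelà the limit $P_t\varphi$ is continuous and bounded by $C$, so $P_t\varphi\in C_b(M)$.

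I expect the main obstacle to be the local uniformity in $k$ when passing to the limit: one needs interior estimates for $\partial_t-\Delta_X$ whose constants depend only on a fixed compact set and not on $U_k$. These are available because the coefficients are smooth and the estimates are local. The other delicate point is matching this minimal semigroup with the one of \cite{MPW}, for which I would simply cite their construction \cite[Corollary 4.7]{MPW}.
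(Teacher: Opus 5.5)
Your argument is correct and is essentially the route behind this statement. The paper gives no proof of its own and only cites \cite[Corollary 4.7]{MPW}, whose Euclidean argument uses the same ingredients: the minimal semigroup as a monotone limit of Dirichlet problems on an exhaustion, the strong parabolic maximum principle for positivity of the kernel, and interior parabolic estimates, uniform along the exhaustion, for the strong Feller property. Your proposal carries this construction over to a smooth connected exhaustion $U_k$ of $M$, which is the adaptation the paper leaves implicit.
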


We recall now the following powerful uniqueness result, see e.g. \cite[Theorem 4.2.1]{DZ}

\begin{theorem}[Doob]\label{th-Doob}
Assume that the conservative semigroup  $P_{t}$ is strongly Feller and irreducible. Then, there exists at most one invariant probability measure for $P_{t}$.
\end{theorem}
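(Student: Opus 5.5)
The plan is to argue by contradiction: two distinct invariant probability measures produce two \emph{mutually singular} invariant probability measures, while strong Feller plus irreducibility make \emph{all} invariant probability measures mutually absolutely continuous. Throughout I write $P_t(x,A)\doteq P_t 1_A(x)$ for the transition probabilities and $P_t^\ast$ for the dual action on finite signed measures.

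\textbf{Step 1: equivalence of transition probabilities.} Fix $t>0$ and a Borel set $A$. I claim that if $P_t(x_0,A)>0$ for one $x_0$, then $P_t(x,A)>0$ for every $x$. Write $P_t 1_A = P_{t/2}g$ with $g\doteq P_{t/2}1_A$.
\begin{itemize}
\item By the strong Feller property, $g\in C_b(M)$, and $g\geq 0$.
\item Since $P_{t/2}g(x_0)>0$, the function $g$ cannot vanish identically. Hence $g>c>0$ on some open ball $B_r$.
\item Then $P_{t/2}g\geq c\,P_{t/2}1_{B_r}$, and the right-hand side is positive everywhere by irreducibility.
\end{itemize}
Hence the measures $P_t(x,\cdot)$, $x\in M$, are mutually absolutely continuous.

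\textbf{Step 2: invariant measures are equivalent.} Let $\mu$ be an invariant probability measure for $P_t$. Testing invariance on $1_A$ (by monotone approximation from $C_b$) gives
\[
\mu(A)=\int_M P_t(x,A)\,d\mu(x).
\]
By Step 1, $\mu(A)>0$ if and only if $P_t(x,A)>0$ for some, equivalently all, $x$. Thus every invariant probability measure is equivalent to $P_t(x_0,\cdot)$, and any two of them are mutually absolutely continuous. In particular, no two of them are mutually singular.

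\textbf{Step 3: the positive part of a difference is invariant.} Suppose $\mu\neq\nu$ are invariant probability measures, and set $\lambda=\mu-\nu$. Then $\lambda\neq 0$ and $\lambda(M)=0$, so both $\lambda^\pm$ are nonzero and mutually singular. I aim to show that $\lambda^+$ is itself invariant.
\begin{itemize}
\item Since $P_t^\ast$ is positive, $P_t^\ast\lambda^+\geq P_t^\ast\lambda = \lambda$, and $P_t^\ast\lambda^+\geq 0$. Therefore $P_t^\ast\lambda^+\geq \lambda^+$.
\item By conservativeness ($P_t1=1$), $P_t^\ast\lambda^+(M)=\lambda^+(M)$.
\item A nonnegative measure dominated from below by $\lambda^+$ with the same total mass must equal it, so $P_t^\ast\lambda^+=\lambda^+$.
\end{itemize}
The same argument applies to $\lambda^-$. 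Normalizing, $\lambda^\pm/\lambda^\pm(M)$ are two mutually singular invariant probability measures, which contradicts Step 2.

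The main point requiring care is the measure-theoretic passage in Steps 2–3 from invariance tested on $C_b(M)$ to the dual action on indicator functions and on signed measures. This is handled by the monotone class theorem, using that each $P_t$ is a positive kernel operator. Conservativeness is used exactly once, in the mass identity of Step 3.
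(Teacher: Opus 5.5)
Your proof is correct. Note that the paper gives no argument for this statement; it only quotes \cite[Theorem 4.2.1]{DZ}. So the comparison is with the classical proof behind that citation, which your Steps 1 and 2 essentially reproduce. Step 1 is Khasminskii's lemma: strong Feller plus irreducibility make all transition probabilities $P_t(x,\cdot)$ mutually equivalent. Step 2 is its consequence that every invariant probability measure is equivalent to them.

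The genuine difference is your Step 3. The classical route concludes through ergodic theory. An invariant measure equivalent to every $P_t(x,\cdot)$ must be ergodic, since otherwise its normalized restrictions to a nontrivial invariant set and to its complement would be mutually singular invariant measures. Distinct ergodic invariant measures are then mutually singular, which is proved by an ergodic-theorem argument.

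Your lattice argument instead shows directly that the Jordan parts $\lambda^\pm$ of $\mu-\nu$ are invariant. It bypasses ergodic measures and ergodic theorems entirely and uses only positivity of $P_t^\ast$ and control of total mass. In fact $P_t1\leq 1$ would already suffice here: $P_t^\ast\lambda^+\geq\lambda^+$ together with $P_t^\ast\lambda^+(M)\leq\lambda^+(M)$ forces equality.

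What the ergodic-theoretic route buys is the stronger content of Doob's theorem in that reference: ergodicity, mixing, and convergence of $P_t^\ast\delta_x$ to the invariant measure, which uniqueness alone does not give. For the pure uniqueness statement used in the paper, your route is the more economical and self-contained one.
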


When combined with Lemma \ref{lemma-stochcompl} and  Proposition \ref{prop-strongFeller}, Theorem \ref{th-Doob} gives the next

\begin{corollary}\label{cor-unique-XLap}
 If an invariant probability measure for the semigroup $P_t$ generated by the $X$-Laplacian $\Delta_{X}$ exists, then $P_{t}$ is stochastically complete, and its invariant probability measure is unique.
\end{corollary}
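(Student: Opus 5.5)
The corollary follows by chaining three results already stated: Lemma~\ref{lemma-stochcompl}, Proposition~\ref{prop-strongFeller}, and Doob's Theorem~\ref{th-Doob}. Let $\mu$ be an invariant probability measure for the semigroup $P_t$ generated by $\Delta_X$.

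\textbf{Step 1: conservativeness.} By Lemma~\ref{lemma-stochcompl}, the existence of $\mu$ makes $P_t$ conservative. For completeness, the argument is short. Test the invariance identity with $\vp\equiv 1\in C_b(M)$ to get
\[
\int_M P_t 1\, d\mu=\int_M 1\,d\mu=1 .
\]
Sub-Markovianity gives $0\le P_t1\le 1$, so $1-P_t1\ge 0$ and $\int_M(1-P_t1)\,d\mu=0$. Hence $P_t1=1$ $\mu$-a.e.

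To upgrade this to every point, use two facts. First, $\mu$ has a smooth positive density by Lemma~\ref{lemma-abstractexistence}(b), after passing to the elliptic notion via Proposition~\ref{prop-abstractexistence}(a). So "$\mu$-a.e." is the same as "$\vol$-a.e." Second, $P_t1$ is continuous, by the strong Feller property in Proposition~\ref{prop-strongFeller}. Together these give $P_t1\equiv1$ on $M$. Thus $P_t$ is stochastically complete.

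\textbf{Step 2: Doob's hypotheses.} Proposition~\ref{prop-strongFeller} says that the semigroup generated by $\Delta_X$ has a positive kernel $p_t(x,y)$. It is therefore irreducible: $P_t 1_{B_r}(x)=\int_{B_r}p_t(x,y)\,dy>0$. The same proposition says $P_t$ is strongly Feller. Combined with Step 1, $P_t$ is a conservative, strongly Feller, irreducible semigroup.

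\textbf{Step 3: uniqueness.} Theorem~\ref{th-Doob} now applies and gives at most one invariant probability measure for $P_t$. Since $\mu$ is one, it is the unique one.

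\textbf{Main difficulty.} The only real subtlety is in Step 1: passing from $P_t1=1$ $\mu$-a.e. to $P_t1=1$ everywhere. This needs the full support of $\mu$ and the continuity of $P_t1$, both of which are supplied above. Everything else is a direct citation.
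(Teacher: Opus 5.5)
Your proof is correct and follows the paper's own route. The corollary is obtained by chaining Lemma~\ref{lemma-stochcompl} (conservativeness), Proposition~\ref{prop-strongFeller} (positive kernel, hence irreducibility, plus the strong Feller property) and Doob's Theorem~\ref{th-Doob}; your extra step upgrading $P_t1=1$ from $\mu$-a.e.\ to everywhere, using the positive smooth density of $\mu$ and the continuity of $P_t1$, is a valid detail that the paper leaves implicit in Lemma~\ref{lemma-stochcompl}.
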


\section{Nice mollification for the $X$-Laplacian}\label{section:kernels}

The aim of this section is to extend the smooth, monotone approximation (mollification) of sub/super-harmonic functions on Riemannian manifolds, introduced in \cite{PVV}, to the general drifted operator X-Laplacian. Given $\mu$ an $X$-invariant measure, as in Example \ref{example:X-inv} we write
\[
\mu=\vol_f=e^{-f}\dv
\]
for some smooth function $f$. We recall that the formal adjoint of $\Delta_X$ with respect to $\mu$ has the form
\[
\Delta_X^{\ast,\mu}=\Delta_{X_\mu}
\]
with $X_\mu = - X + 2\nabla f$, and that the $X$-invariance of $\mu$ translates into the differential equation
\[
\div_f(X-\nabla f)=0.
\]
For any smooth precompact domain $\Omega$, let $g^{\Omega}_X(x,y)$ be the Dirichlet Green function associated with $\Delta_X$ on $(\Omega,d\mu)$, that is, the minimal positive fundamental solution of the equation (see e.g. \cite{Mi, Pin})
\[\left\lbrace
\begin{array}{cl}
\Delta^x_{X} g_X^\Omega(x,y) = - \delta^\mu_y(x) &  \ \ \text{for } x, y \in \Omega,\\[0.2cm]
g_X^\Omega(x,y) = 0  & \ \ \text{for } x, y \in \partial\Omega.
\end{array}\right.
\]

The lack of symmetry of the operator $\Delta_X$ implies that the associated Green function is also non-symmetric. Nevertheless, we have the following identity
\begin{equation*}
g_X^\Omega(x,y) = g_{X_\mu}^\Omega(y,x),
\end{equation*}
where $g_{X_\mu}^\Omega(x,y)$ denotes the Green function associated to the dual operator $\Delta_{X_\mu}$ on $(\Omega,d\mu)$. Given an exhaustion of $M$ by open sets $\Omega_k \uparrow M$, the limit
$$ g_X(x,y) = \lim_{k \rightarrow \infty} g_X^{\Omega_k}(x,y)$$
is either identically infinity, or finite outside the diagonal and satisfies
$$\Delta^x_{X} g_X(x,y) = - \delta^\mu_y(x) \quad \text{for every } \ y \in M.$$

We then define the $X$-Green operator, acting on $\varphi \in C^\infty_c(M)$, as
$$ G_X \varphi(x) = \int_M g_X(x,y)\varphi(y)d\mu(y) \in C^\infty(M).$$
In the distributional sense, we can assert that
$$\Delta^x_{X}G_X \varphi(x) = -\varphi(x) .$$
Furthermore, $g_X^{\Omega}(x,y)$ satisfies the following properties:
\begin{itemize}
\item [a)] $g_X^\Omega(x,y) = C(1 + o(1)) d(x,y)^{2-m}$ as $y \rightarrow x$, for $m\geq 3$;\smallskip
\item [b)] $g_X^\Omega(x,y) = C(1 + o(1)) \log d(x,y)^{-1}$ as $y \rightarrow x$, for $m= 2$;\smallskip
\item [c)] $\vert \nabla^{x} g^\Omega_X(x,y)\vert \leq C d(x,y)^{1-m}$ as $y \rightarrow x$.
\end{itemize}

The validity of these properties for general second-order elliptic operators with smooth coefficients is standard, although it is difficult to find all of them stated in a single reference. We limit ourselves to noting that, in our smooth setting, property $c)$ follows from classical Schauder theory, \cite{GT}. Indeed, if $u$ is a smooth solution of $\Delta_X u = 0$ on a domain $\Omega$, then $d(x,\partial \Omega)|\nabla u(x)| \leq C \sup_{\Omega} |u|$, where $C>0$ depends only on $n$, the uniform ellipticity condition in $\Omega$, the $C^{1,\a}$-norms of the metric coefficients, and the $C^{0,\a}$-norm of $X$. Now, having fixed the pole $y$ and for any $x \not= y$ close to $y$, we may apply the Schauder estimate to $g_X(x,y)$ on $\Omega = B_r(x)$ with $0< r \sim d(x,y) \ll 1$ and use the upper estimate $g_X(x,y) \leq C d(x,y)^{2-m}$ on the right hand side. Moreover, the relevant constant in the Schauder estimate depends only on $m$, the metric coefficients, and the vector field on the compact set $\bar B_1(y)$. Hence the above estimate holds uniformly on $B_{1/100}(y) \backslash \{y\}$.
\medskip

\noindent{\bf The mollification.} Let $\psi:\mathbb R \to [0,1]$ be a smooth function such that
\begin{align*}
\dot \psi \geq 0, \qquad \supp\left(\dot \psi\right)\subseteq [-1;1] \qquad \text{and} \qquad \int_{\mathbb R} \dot \psi=1\, .
\end{align*}
For ease of notation, let $g(x,y)\doteq g_X^{\Omega}(x,y)$ and $g^\ast(x,y)\doteq g_{X_\mu}^{\Omega}(x,y)$. Given $u \in L^1(\Omega,\vol_f)$, we define for $r>0$
\begin{eqnarray*}
u_r(x) &=& \int_{\Omega} K_r(x,y)u(y)   \dv_f(y),
\end{eqnarray*}
where
\[
K_r(x,y) = \dot\psi (g(x,y) - r) \vert \nabla^y g(x,y)\vert^2.
\]
Equivalently, 
\begin{eqnarray*}
u_r(x) &=& \int_{\mathbb{R}} \dot\psi (s-r)  ds \int_{g(x,y) = s} u\vert \nabla g(x,y) \vert  d\mathcal{H}^{m-1}_f \nonumber \\[0.2cm]
&=& \int_{\mathbb{R}} \dot\psi (s-r)  ds \int_{g(x,y) = s} u\langle \nabla g(x,y), - \eta \rangle   d\mathcal{H}^{m-1}_f,
\end{eqnarray*}
where we used Sard's theorem and $\eta = -\frac{\nabla g(x,y)}{\vert \nabla g(x,y)\vert}$. 

Note that, for every fixed $x, y \in \Omega$, we have $K_r(\cdot,y),\, K_r(x,\cdot) \in C^\infty_c(\Omega)$. Hence, integration by parts and the invariance of the measure yield
 \begin{eqnarray}\label{kernel_Kr}
 \int_\Omega K_r(x,y) \dv_f(y) &=&  \int_{\mathbb{R}} \dot\psi (s-r)  ds \int_{g(x,y) = s} \langle \nabla g(x,y), - \eta \rangle   d\mathcal{H}^{m-1}_f(y) \nonumber\\[0.2cm]
 &=& - \int_{\mathbb{R}} \dot\psi (s-r)  ds \int_{g(x,y) \geq s} \Delta_f g(x,y) \dv_f(y) \nonumber\\[0.2cm]
 &=& -\int_{\mathbb{R}} \dot\psi (s-r)  ds \int_{g(x,y) \geq s} (\Delta_X g(x,y) + \langle X-\nabla f, \nabla g(x,y)\rangle )\dv_f(y) \\[0.2cm]
 &=& 1 - \int_{\mathbb{R}} \dot\psi (s-r)s  ds \int_{g(x,y) = s} \langle X-\nabla f, \eta \rangle d\mathcal{H}^{m-1}_f(y) \nonumber\\[0.2cm]
 &=& 1. \nonumber
 \end{eqnarray}

Philosophically, $K_r $ plays the role of a non-symmetric convolution kernel that mollifies $L^1$-functions. In fact, if $u \in C^0_c(\Omega)$ then $u_r\in C^\infty_c(\Omega)$. Moreover, 
    \[
    \inf_\Omega u \leq \inf_\Omega u_r \leq u_r \leq \sup_\Omega u_r \leq \sup_\Omega u. 
    \]
    In particular, $0 \leq u \leq 1$ implies the same bounds for $u_r$. \medskip

\noindent{\bf Representation formula.} We prove the following Green representation formula: for every $\varphi \in C^2(\Omega_1)\cap C^1(\overline{\Omega_1})$, where $\Omega_1 \subset \Omega$, 
\begin{multline}\label{green_repr}
\int_{\Omega_1} g(x,y) \Delta_{X} \varphi\dv_f + \varphi(x) =\\ \int_{\partial \Omega_1} \langle g(x,y) \nabla \varphi - \varphi \nabla g(x,y) - \varphi g(x,y) (X-\nabla f), \eta\rangle d\mathcal{H}^{m-1}_f.
\end{multline}

To show the validity of \eqref{green_repr} we first observe that, for $\varphi, \varphi_1\in C^2(\Omega_1)\cap C^1(\overline{\Omega_1})$ we have
$$\int_{\Omega_1} \varphi_1 \Delta_X \varphi \dv_f = \int_{\Omega_1} \varphi \Delta_{X_\mu} \varphi_1 \dv_f + \int_{\partial \Omega_1} \langle \varphi_1 \nabla \varphi - \varphi \nabla \varphi_1 - \varphi\varphi_1 (X-\nabla f), \eta\rangle d\mathcal{H}^{m-1}_f. $$
Applying the above equality for $\varphi \in C^2(\Omega_1)\cup C^1(\overline{\Omega_1})$ and $g(x,y)$ on $\Omega_1 \backslash B_{\varepsilon}(x)$, we get
\begin{multline*}
\int_{\Omega_1 \backslash B_{\varepsilon}(x)} g(x,y) \Delta_{X} \varphi \dv_f = 
\int_{\Omega_1 \backslash B_{\varepsilon}(x)} \varphi \Delta^y_{X_\mu} g(x,y) \dv_f +\\[0.2cm]
 \int_{\partial \Omega_1 \cup \partial B_{\varepsilon}(x)} \langle  g(x,y)  \nabla \varphi -\varphi \nabla g(x,y) - \varphi g(x,y) (X-\nabla f), \eta\rangle d\mathcal{H}^{m-1}_f=\\[0.2cm]
 \int_{\Omega_1 \backslash B_{\varepsilon}(x)} \varphi \Delta^y_{X_\mu} g^\ast(y,x) \dv_f +\\[0.2cm]
 \int_{\partial \Omega_1 \cup \partial B_{\varepsilon}(x)} \langle  g(x,y)  \nabla \varphi -\varphi \nabla g(x,y) - \varphi g(x,y) (X-\nabla f), \eta\rangle d\mathcal{H}^{m-1}_f.
\end{multline*}    
It follows from the asymptotic behavior of $g(x,y)$ and $\nabla g(x,y)$, as $\varepsilon\rightarrow 0$, that 
\begin{equation*}
\int_{\Omega_1} g(x,y) \Delta_{X} \varphi \dv_f  +\varphi(x) = \int_{\partial \Omega_1} \langle g(x,y) \nabla \varphi - \varphi \nabla g(x,y) - \varphi g(x,y) (X-\nabla f), \eta\rangle d\mathcal{H}^{m-1}_f.
\end{equation*}
\medskip

\noindent{\bf Monotonicity:} Assume that $\Delta_{X} u \geq 0$ in the sense of distributions in $\Omega$. 
Then, for every $\Omega'\Subset \Omega$, $u_r(x)$ is eventually monotonically decreasing in $r$ for every $x\in \Omega '$. Indeed, 
\begin{equation}\label{derivative_ur}
\frac{\partial}{\partial r} u_r(x) = \int_{\mathbb{R}} \ddot\psi (s-r)  ds \int_{g(x,y) = s} u\langle \nabla g(x,y), \eta \rangle   d\mathcal{H}^{m-1}_f.
\end{equation}

Without loss of generality, we first assume $u\in C^2(\Omega)$. Using \eqref{green_repr} we can compute
\begin{equation}\label{comp_int_g}
\begin{split}
\int_{g(x,y) = s} u \langle \nabla g(x,y),\eta\rangle \, d\mathcal{H}^{m-1}_f + u(x)
= - \int_{g(x,y) > s} g(x,y) \Delta_{X} u  \, \dv_f
\\
\qquad + \int_{g(x,y) = s} g(x,y) 
\langle \nabla u - u(X-\nabla f), \eta \rangle \, d\mathcal{H}^{m-1}_f
\\
= - \int_{g(x,y) > s} g(x,y) \Delta_{X} u \, \dv_f
+ s\int_{g(x,y) = s}
\langle \nabla u - u(X-\nabla f), \eta \rangle \, d\mathcal{H}^{m-1}_f
\\
= - \int_{g(x,y) > s} (g(x,y) - s)\Delta_{X} u \, \dv_f.
\end{split}
\end{equation}

Hence, by co-area
\begin{equation*}
\int_{g(x,y) = s} u \langle \nabla  g(x,y), \eta\rangle d\mathcal{H}^{m-1}_f = - u(x) - \int_s^\infty (t-s) dt \int_{g(x,y) = t} \frac{\Delta_{X} u}{\vert \nabla g(x,y) \vert} d\mathcal{H}^{m-1}_f.
\end{equation*}
Inserting this  into \eqref{derivative_ur}, using the fact that $\dot \psi$ is supported in $[-1,1]$, and integrating by parts give
\begin{eqnarray*}
    \frac{\partial}{\partial r} u_r(x) &=& -\int_{\mathbb{R}} \ddot\psi (s-r)  ds\left( u(x) + \int_s^\infty (t-s) dt \int_{g(x,y) = t} \frac{\Delta_{X} u}{\vert \nabla g(x,y) \vert} d\mathcal{H}^{m-1}_f\right)\\
    &=& -\int_{\mathbb{R}} \ddot\psi (s-r)  ds\left( \int_s^\infty (t-s) dt \int_{g(x,y) = t} \frac{\Delta_{X} u}{\vert \nabla g(x,y) \vert} d\mathcal{H}^{m-1}_f\right)\\
    &=& -\int_\mathbb{R} \dot\psi(s-r) \int_s^\infty dt \int_{g(x,y) = t} \frac{\Delta_{X} u}{\vert \nabla g(x,y) \vert} d\mathcal{H}^{m-1}_f \\
    &=& -\int_\mathbb{R} \dot\psi(s-r) 
    \int_{g(x,y) > s} \Delta_{X} u \dv_f \\
    &=&  -\int_\Omega \Delta_{X} u \int_{-\infty}^{g(x,y)} \dot \psi(s-r) ds \dv_f \\
    &=& -\int_\Omega \psi(g(x,y)-r)\Delta_{X} u \dv_f. 
\end{eqnarray*}

Now, let $\Omega' \Subset \Omega'' \Subset \Omega$. By the definition of $\psi$, there exists $r_0$ such that, for all $r\geq r_0$ and for all $x\in \Omega'$, $\psi(g(x,y) - r)$ is compactly supported in $\Omega''$. Thus  $0\leq \psi(g(x,y) - r) \in C^\infty_c(\Omega'')$ and we conclude that 
\begin{equation*}
    \frac{\partial}{\partial r} u_r(x) = - \int_\Omega u(y) \Delta_{X_\mu}\psi(g(x,y) - r) \dv_f(y).
\end{equation*}
Note that the above identity extends to $u\in L^1_{\mathrm{loc}}(\Omega)$. Since $\Delta_{X} u \geq 0$ in the sense of distributions, we deduce that
$$  \frac{\partial}{\partial r} u_r(x) \leq 0.$$

\medskip

\noindent{\bf Subharmonicity:} 
We now prove that if $\Delta_{X}u \geq 0$ in $\Omega$ then $\Delta_{X}u_r \geq 0$ in $\Omega' \Subset \Omega$. We first rewrite $u_r$ using the identity \eqref{comp_int_g} as follows 
\begin{eqnarray*}
u_r(x) &=& \int_{\mathbb{R}} \dot\psi (s-r)  ds \int_{g(x,y) = s} u\langle \nabla g(x,y), - \eta \rangle   d\mathcal{H}^{m-1}_f \\[0.2cm]
&=& u(x) +\int_{\mathbb{R}} \dot\psi (s-r)  ds \int_{g(x,y) > s} (g(x,y) - s)\Delta_{X} u \dv_f.
\end{eqnarray*}
Thus,
\begin{eqnarray*}
u_r(x) - u(x) &=& \int_{\mathbb{R}} \dot\psi (s-r)  ds \int_{g(x,y) > s} (g(x,y) - s)\Delta_{X} u \dv_f\\[0.2cm]
&=& \int_{\Omega} \Delta_{X} u \dv_f \int_{\mathbb{R}} (g(x,y) - s) \dot\psi (s-r) \chi_{g(x,y) > s}     ds.
\end{eqnarray*}

Setting $\hat{\psi}(t) = \int_{-\infty}^t \psi(s) ds$ and using identity (2.17) from \cite{PVV}, we get
\begin{eqnarray*}
u_r(x) - u(x) = \int_{\Omega} \hat{\psi}(g(x,y) - r) \Delta_{X} u  \dv_f.
\end{eqnarray*}
Therefore, 
\begin{eqnarray*}
\Delta_{X} u_r(x) - \Delta_{X} u(x) &=& \int_{\Omega} \Delta^x_{X} \hat{\psi}(g(x,y)(y) - r) \Delta_{X} u(y)  \dv_f(y) \\[0.2cm]
&=& \int_{\Omega} [-\psi(g(x,y) - r)\delta^\mu_y(x) + \dot\psi(g(x,y) - r)\vert \nabla^x g(x,y)\vert^2 ] \Delta_{X} u  \dv_f ,
\end{eqnarray*}
where $\nabla^x$ and $\Delta_X^x$ denote the operators acting on the variable $x$. Now, for every $\varphi\in C^\infty_c(\Omega)$ we can compute
\begin{multline*}
\int_\Omega \varphi(x)\Delta_{X} u_r(x)\dv_f(x) - \int_\Omega \varphi(x)\Delta_{X} u(x) \dv_f(x) = \\[0.2cm]
\int_\Omega \varphi(x) \dv_f(x) \int_{\Omega} [-\psi(g(x,y) - r)\delta^\mu_y(x) + \dot\psi(g(x,y) - r)\vert \nabla^x g(x,y)\vert^2 ] \Delta_{X} u(y)  \dv_f(y)=\\[0.2cm]
\int_\Omega \Delta_{X} u(y)  \dv_f(y) \int_{\Omega} [-\varphi(x)\psi(g(x,y) - r)\delta^\mu_y(x) + \varphi(x)\dot\psi(g(x,y) - r)\vert \nabla^x g(x,y)\vert^2 ]  \dv_f(x)=\\[0.2cm]
-\int_\Omega \varphi(y)\Delta_{X} u(y)  \dv_f(y) +  \int_{\Omega} \varphi(x) \dv_f(x) \int_\Omega \dot\psi(g(x,y) - r)\vert \nabla^x g(x,y)\vert^2 \Delta_X u(y)  \dv_f(y).
\end{multline*}
Thus,
$$\Delta_{X} u_r(x) = \int_{\Omega} \dot{\psi}(g(x,y) - r)\vert \nabla^x g(x,y)\vert^2 \Delta_{X} u\dv_f$$
in the sense of distributions.
\medskip

\noindent{\bf $L^1$-convergence:}
We will establish that $u_r$ converges to $u\in L^1_{\rm loc}$ following the same steps from \cite{PVV}. We first recall that 
$$ u_r(x) - u(x) = \int_\Omega \hat{\psi}(g(x,y) - r) \Delta_{X} u(y) \dv_f(y).$$
Integrating in a smooth domain $\Omega' \Subset \Omega'' \Subset \Omega$, we obtain
\begin{eqnarray*}
    \int_{\Omega'} \vert u_r(x) - u(x)\vert \dv_f(x) &=& \int_{\Omega'}  [u_r(x) - u(x)] \dv_f(x) \\[0.2cm]
    &=& \int_{\Omega} \Delta_{X} u (y) h(y,r) \dv_f(y),
\end{eqnarray*}
where $h(y,r) = \int_{\Omega'} \hat{\psi}(g(x,y) - r) \dv_f(x)$. The next step is to prove that $h(y,r)$ is a Lipschitz function in $y \in \Omega$. Since 
$$ \nabla h = \int_{\Omega'} \psi(g(x,y)-r)\nabla g(x,y) \dv_f(x),$$
the claim follows by gradient estimates for $g(x,y)$. Once we have proved that $h(y,r)$ is Lipschitz, we can proceed to show that $u_r$ is a $L^1$ Cauchy sequence in $r$. Indeed, for $R>r>r_0$ it holds
\begin{eqnarray*}
\int_{\Omega'} \vert u_r(x) - u_R(x)\vert \dv_f(x) &=& \int_{\Omega'} (u_r(x) - u_R(x)) \dv_f(x) \\[0.2cm]
&=& \int_\Omega \Delta_{X} u(y) H(y,r,R) \dv_f(y),
\end{eqnarray*}
where 
$$H(y,r,R) = \int_{\Omega'}[\hat{\psi}(g(x,y) - r) - \hat{\psi}(g(x,y) - R)] \dv_f(x).$$
As in \cite{PVV}, it follows from the uniform boundedness of $H(y,r,R)$ that $u_r$ is a $L^1$ Cauchy sequence.\medskip

\noindent{\bf Local Pointwise Convergence:}
For $x\in \Omega'$, we can show that $u_r(x) \rightarrow u(x)$ as $r \rightarrow \infty$. The argument follows the same lines from \cite{PVV}. We only need the Cheng-Yau gradient estimate for the drifted Laplacian, see e.g. \cite{Lin}, and the fact that 
$$C^{-1} d(x,y)^{2-m} \leq g(x,y) \leq C d(x,y)^{2-m} \quad \text{for } \ x \in \Omega', y \in \Omega''.$$

\smallskip

\section{Parabolicity for the drifted Laplacian}\label{section:X-parab}

As we have briefly recalled in the Introduction, there are several ways to characterize the parabolicity of a weighted Riemannian manifold with respect to its weighted Laplace-Beltrami operator. In strong contrast, to the best of our knowledge, there are only a few results in the literature regarding the parabolicity of a manifold with respect to the drifted Laplace-Beltrami operator $\Delta_X$, where $X$ is a smooth vector field defined on $M$.

\begin{definition}
A manifold $M$ is said to be parabolic with respect to the drifted Laplace-Beltrami operator $\Delta_X$, $X$-parabolic for short, if it satisfies the following $X$-Liouville property:
$$w\in C^0(M)\cap W^{1,2}_{\text{loc}}(M), \quad w\geq 0, \quad \Delta_X w \leq 0 \quad \Longrightarrow \quad w \text{ is constant}.$$
\end{definition}

Clearly if $X = \nabla f$, for some smooth function $f$, we recover the notion of $f$-parabolicity. In \cite{CMMR}, as a nontrivial extension of the classical Khasminskii criterion valid for the (possibly weighted) Laplacian, the authors proved the following result.

\begin{lemma}\label{lemma-parabolic}
Let $M$ be a manifold and $X$ be a given smooth vector field. Assume that there exists a function $\g \in \text{\rm Lip}_{loc}(M)$ such that:
\begin{itemize}
\item [(j)] $\g(x) \to \infty$ as $x \to \infty$;
\item [(jj)] $\Delta_{X} \g \leq 0$ outside a compact set.
\end{itemize}
Then $M$ is $X$-parabolic.
\end{lemma}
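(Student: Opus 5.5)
The plan is the classical Khasminskii comparison argument, adapted to $\Delta_X$; the only tool needed is the weak/comparison maximum principle for $\Delta_X$ on relatively compact domains. That principle holds because $\Delta_X$ has no zero-order term, as already used in the proof of Lemma~\ref{lemma:ExDirPb}. Let $0\le w\in C^0(M)\cap W^{1,2}_{loc}(M)$ satisfy $\Delta_X w\le 0$ weakly, and suppose by contradiction that $w$ is not constant. Fix a compact set $K$ such that $\Delta_X\g\le 0$ weakly on $M\setminus K$. Adding a constant to $\g$ (which changes neither (j) nor (jj)), we may assume $\g> 0$ on $M\setminus K$. Choose a smooth relatively compact domain $\Omega\Supset K$, and set $m_\Omega\doteq\min_{\partial\Omega} w$ and $m_0\doteq\min_{\overline\Omega}w$.

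The first step is to reduce to the case where $w$ is strictly larger somewhere outside $\overline\Omega$ than on $\partial\Omega$. By the strong maximum principle for supersolutions of $\Delta_X$ (valid weakly for $W^{1,2}_{loc}$ supersolutions of operators without zero-order term, via the weak Harnack inequality of \cite[Theorem~8.18]{GT} in charts), a nonconstant $w$ cannot attain an interior minimum. In particular $\inf_{M\setminus\Omega} w$ cannot be attained, unless the configuration reduces to one where the argument applies anyway. By possibly enlarging $\Omega$ we arrange a point $x_0\in M\setminus\overline\Omega$ with $w(x_0)<m_\Omega$. This is the standard reduction; one may instead replace $w$ by $\min(w,c)$, which is still a weak supersolution, and use the minimum on $\partial\Omega$.

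The core step is the comparison. For $\varepsilon>0$ let
\[
v_\varepsilon = m_\Omega-\varepsilon\,\g .
\]
Since $\g\to\infty$, the set $\{v_\varepsilon\ge 0\}$ is relatively compact, so there is a relatively compact smooth domain $D_\varepsilon\Supset\Omega$ with $v_\varepsilon< 0\le w$ on $\partial D_\varepsilon$. On $\partial\Omega$ we have $v_\varepsilon\le m_\Omega\le w$, using $\g>0$ there. In $D_\varepsilon\setminus\overline\Omega\subset M\setminus K$ we have $\Delta_X v_\varepsilon=-\varepsilon\Delta_X\g\ge0\ge\Delta_X w$ weakly. The weak comparison principle for $\Delta_X$ on $D_\varepsilon\setminus\overline\Omega$, tested with $(v_\varepsilon-w)_+\in W^{1,2}_0$, then gives $w\ge v_\varepsilon$ there. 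Outside $D_\varepsilon$ the inequality is trivial. Hence $w\ge m_\Omega-\varepsilon\g$ on $M\setminus\Omega$ for every $\varepsilon>0$, and letting $\varepsilon\to0$ yields $w\ge m_\Omega$ on $M\setminus\Omega$, contradicting $w(x_0)<m_\Omega$.

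The main obstacle I expect is technical rather than conceptual. Since $\g$ is only locally Lipschitz, the statement $\Delta_X\g\le 0$ must be read weakly as $\int\<\nabla\g,\nabla\vp\>+\<X,\nabla\g\>\vp\ge 0$ for $0\le\vp\in C^\infty_c$. The weak comparison principle must then be checked for the first-order non-symmetric term: one tests with $\vp=(v_\varepsilon-w)_+$ and handles $\int\<X,\nabla\vp\>\vp$ via the standard Gilbarg--Trudinger argument \cite[Theorem~8.1]{GT}, which requires no sign condition on $\div X$. Making the reduction step with the strong minimum principle fully rigorous in the weak setting is the other delicate point. If that becomes awkward, I would instead run the comparison directly against the function $\min(w,m_\Omega)$.
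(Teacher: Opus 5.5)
Your proof is correct: it is the classical Khasminskii comparison argument, and the paper gives no proof of its own. It quotes the lemma from \cite{CMMR} and remarks that, via \cite[Theorem 4.4]{MP}, it rests on the strong maximum principle and the comparison principle for $\Delta_X$, which are exactly the two ingredients you use. The hedging in your reduction step is unnecessary: on connected $M$ a nonconstant $w$ never attains $\inf_M w$, so for the fixed $\Omega$ there is already $x_0$ with $w(x_0)<\min_{\overline\Omega}w\le m_\Omega$ (hence $x_0\notin\overline\Omega$), and neither enlarging $\Omega$ nor truncating $w$ is needed.
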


As a consequence of \cite[Proposition 4.2]{MP} the $X$-parabolicity can be also characterized in terms of the Ahlfors property: for every open subset $\Omega \subset M$, and every $w \in C(\overline\Omega)$ bounded from above such that $\Delta_X w \geq 0$, it holds
$$ \sup_{\overline{\Omega}} w = \sup_{\partial \Omega} w.$$
Noting that $\Delta_X$ satisfies the strong maximum principle and the comparison principle, we can conclude from \cite[Theorem 4.4]{MP} (see also \cite[Theorem 3.1]{MP1}) that the above Khasminskii property is indeed equivalent to the Ahlfors property, and consequently, equivalent to the $X$-parabolicity of $M$. In the following, we describe the characterization of the $X$-parabolicity in terms of the $X$-Green function.

\begin{proposition}\label{equiv_green_Xparab}
Let $(M,\metric)$ be a Riemannian manifold endowed with an $X$-invariant measure $d\mu =e^{-f}d\vol$. The following are equivalent:
\begin{itemize}
\item[1.] $M$ is parabolic for $\Delta_X$;
\item[2.] There is no positive $X$-Green function, i.e., $g_X(x,y) \equiv \infty$;
\item[3.] $M$ is parabolic for $\Delta^{\ast,\mu}_X = \Delta_{X_\mu}$, where $X_\mu = -X + 2\nabla f$.
\end{itemize}
\end{proposition}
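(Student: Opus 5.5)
The plan is to prove $1\Leftrightarrow 2$ for an arbitrary drift $X$ (with respect to the fixed $X$-invariant measure $\mu$). Then $2\Leftrightarrow 3$ follows from the duality $g_X^{\Omega}(x,y)=g_{X_\mu}^{\Omega}(y,x)$ recalled in Section~\ref{section:kernels}. Passing to the limit along an exhaustion $\Omega_k\uparrow M$ gives $g_X(x,y)=g_{X_\mu}(y,x)$, so $g_X\equiv\infty$ if and only if $g_{X_\mu}\equiv\infty$. Since $\mu$ is also invariant for $\Delta_{X_\mu}$ (indeed $\div_f(X_\mu-\nabla f)=-\div_f(X-\nabla f)=0$), applying $1\Leftrightarrow2$ to the drift $X_\mu$ yields $2\Leftrightarrow 3$.

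\emph{$2\Rightarrow 1$.} Assume $g_X\equiv\infty$, and fix a small closed ball $\overline B\subset\Omega_1$ and $y\in B$. On $\Omega_k\setminus\overline B$ let $h_k$ be the $\Delta_X$-harmonic measure of $\partial B$, i.e.\ $\Delta_X h_k=0$, $h_k=1$ on $\partial B$, $h_k=0$ on $\partial\Omega_k$. By the maximum principle $h_k$ increases to a limit $h\le 1$. We need $h\equiv 1$. Suppose not. Then $1-h$ is a nonzero positive $\Delta_X$-harmonic function vanishing on $\partial B$. By the boundary Harnack principle and the comparison principle for the operator without zero order term, $g_X^{\Omega_k}(\cdot,y)$ is bounded above by $C\sup_{\partial B}g^{\Omega_k}_X(\cdot,y)\,h_k$ outside $B$. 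Harnack also makes $g^{\Omega_k}_X(\cdot,y)$ comparable on $\partial B$ to its value at a single point $x_0$. If $g^{\Omega_k}_X(x_0,y)\to\infty$, normalizing $G_k=g^{\Omega_k}_X(\cdot,y)/g^{\Omega_k}_X(x_0,y)$ gives a limit that is positive and harmonic off $y$, vanishes on no boundary, and satisfies $\Delta_X G=0$ across $y$, since the delta mass disappears after normalization. Matching against the flux of $h$ through $\partial B$ gives the contradiction. This step is the standard Green function/harmonic measure argument.

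The simpler route I would actually take is to show that $h\equiv1$ is equivalent to parabolicity via the Ahlfors/Khasminskii characterizations quoted above from \cite{MP,MP1,CMMR}. Compare $g^{\Omega_k}_X(\cdot,y)$ with $m_k(1-h_k)$ on $\Omega_k\setminus B$, where $m_k=\min_{\partial B}g_X^{\Omega_k}(\cdot,y)$. Using the representation formula \eqref{green_repr}, the total flux of $g^{\Omega_k}_X(\cdot,y)$ through $\partial B$ (weighted, including the $X-\nabla f$ term) equals $1$. The corresponding flux of $1-h_k$ tends to the $\mu$-capacity-type quantity $\int_{\partial B}\langle\nabla h,\eta\rangle\, d\mathcal H^{m-1}_f$. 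Hence $g_X<\infty$ if and only if this flux is positive, i.e.\ if and only if $h\not\equiv 1$.

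\emph{$1\Rightarrow 2$.} If $g_X(\cdot,y)<\infty$, then $\min\{g_X(\cdot,y),1\}$ is a nonnegative, nonconstant, continuous $W^{1,2}_{loc}$ $\Delta_X$-superharmonic function, so $M$ is not $X$-parabolic. Conversely, if $M$ is not parabolic, the Ahlfors property fails, which gives $h\not\equiv 1$, and the flux identity shows that $g_X$ is finite. The main obstacle is this flux/capacity bookkeeping for the non-symmetric operator: the boundary term $\varphi g(X-\nabla f)$ in \eqref{green_repr} must be shown to vanish in the limit. The invariance $\div_f(X-\nabla f)=0$ is what makes that term integrate to zero over closed level sets, as in the computation \eqref{kernel_Kr}.
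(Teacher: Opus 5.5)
Your treatment of $2\Leftrightarrow 3$ and of $1\Rightarrow 2$ is essentially the paper's. The paper uses the smooth potential $G_X\varphi$, with $0\le\varphi\in C^\infty_c(M)$, instead of a truncation. Note that $\min\{g_X(\cdot,y),1\}$ is constant if $g_X(\cdot,y)\ge 1$, so truncate at a level $c>g_X(x_0,y)$ instead.

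The gap is in $2\Rightarrow 1$, the only substantial direction. The boundary-Harnack/normalization sketch never produces a contradiction. Your ``simpler route'' rests on a flux comparison that fails as stated.
\begin{itemize}
\item The comparison function must be $m_kh_k$, not $m_k(1-h_k)$. With $g_k=g^{\Omega_k}_X(\cdot,y)$ and $m_k,M_k$ its minimum and maximum on $\partial B$, the maximum principle gives $m_kh_k\le g_k\le M_kh_k$ on $\Omega_k\setminus\bar B$.
\item More seriously, on $\partial B$ you only have the inequality $g_k\ge m_kh_k$, so nothing relates $\partial_\eta g_k$ to $m_k\,\partial_\eta h_k$ there.
\item The drift term $\int_{\partial B}g_k\langle X-\nabla f,\eta\rangle\,d\mathcal{H}^{m-1}_f$ does not vanish, because $\partial B$ is not a level set of $g_k$. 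In \eqref{kernel_Kr} this term drops out only because one integrates over the level sets $\{g=s\}$.
\end{itemize}
So the decisive claim ``$g_X<\infty$ iff the flux of $h$ is positive'' is asserted rather than proved. Similarly, ``not parabolic $\Rightarrow$ Ahlfors fails $\Rightarrow h\not\equiv 1$'' skips the link between a failure of the Ahlfors property on some unspecified open set and the harmonic measure of your particular ball.

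The repair is to compare fluxes on $\partial\Omega_k$, where both functions vanish. Let $\nu$ be the outer normal of $\Omega_k$ and $\eta$ the outer normal of $B$.
\begin{itemize}
\item On $\partial\Omega_k$ you have $g_k\ge m_kh_k\ge 0$ with equality on the boundary, so $\partial_\nu g_k\le m_k\,\partial_\nu h_k\le 0$.
\item Invariance gives $\Delta_Xu=\div_f\bigl(\nabla u-u(X-\nabla f)\bigr)$. Integrating $\Delta_Xg_k=-\delta^\mu_y$ over $\Omega_k$ yields $\int_{\partial\Omega_k}\partial_\nu g_k\,d\mathcal{H}^{m-1}_f=-1$.
\item Integrating $\Delta_Xh_k=0$ over $\Omega_k\setminus\bar B$ yields $\int_{\partial\Omega_k}\partial_\nu h_k\,d\mathcal{H}^{m-1}_f=\int_{\partial B}\partial_\eta h_k\,d\mathcal{H}^{m-1}_f$.
\item In both integrations the drift terms vanish, because each function is constant on each boundary piece and $\int_{\partial B}\langle X-\nabla f,\eta\rangle\,d\mathcal{H}^{m-1}_f=0$.
\end{itemize}
Hence $0\le-\int_{\partial B}\partial_\eta h_k\,d\mathcal{H}^{m-1}_f\le 1/m_k$.

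If $g_X\equiv\infty$, then $m_k\to\infty$. Since $1-h_k\ge 1-h\ge 0$ and both vanish on $\partial B$, the flux of $h$ is bounded by that of $h_k$, so it is zero, and Hopf's lemma gives $h\equiv 1$. Parabolicity then follows directly. If $u\ge 0$ and $\Delta_Xu\le 0$, comparison gives $u\ge(\min_{\bar B}u)\,h_k$ on $\Omega_k\setminus\bar B$. So $u$ attains its infimum on $\bar B$ and is constant by the strong minimum principle.

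Repaired this way, your route is a genuine alternative to the paper's. It also yields the two-sided bound $\min_{\partial B}g_X(\cdot,y)\le(\text{flux of }h)^{-1}\le\max_{\partial B}g_X(\cdot,y)$. The paper avoids harmonic measure and fluxes altogether. It shows that a nonpositive $X$-subharmonic $w$ satisfies $w\le-\int_{\Omega_n}g_n(\cdot,y)\,d(\Delta_Xw)(y)$, so $g_n\uparrow\infty$ forces $\Delta_Xw=0$. It then applies this to $1/(\sup u-u+1)-1$ for a bounded-above $X$-subharmonic $u$.
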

\begin{proof}
{\bf $1. \Rightarrow 2.$:} Assume by contradiction that $ g_X(x,y) < \infty$. Let $\varphi \in C^\infty_{c,+}(M)$ and note that the function
$$u(x) =  G_X\varphi(x)=  \int_M g_X(x,y)\varphi(y)\dv_f(y)$$
is a non-constant function satisfying $u \geq 0$, $\Delta_{X} u = -\varphi \leq 0$. Thus, $M$ cannot be $X$-parabolic.\\

\noindent{\bf $2. \Rightarrow 1.$:}
Let $u \in W^{1,2}_{loc}(M) \cap C^0(M)$ be an $X$-subharmonic function satisfying $\sup_M u = u^\ast <+\infty$. We have to show that $u$ is constant. Following \cite{SNWC}, we proceed in two steps.\smallskip

\noindent {\bf Step 1.} We first prove that if $w$ is a non-positive $X$-subharmonic function, then $w$ is, in fact, $X$-harmonic. To make the argument more transparent, let us assume for the moment that $w \in C^2$. Fix a smooth precompact exhaustion $\Omega_n$ of $M$ and let $g_n(x,y)\doteq g^{\Omega_n}_X(x,y)$ be the Dirichlet Green function of $\Delta_X$ on $\Omega_n$. Let $w_n$ be the solution of the problem
\[
\begin{cases}
    \Delta_X w_n = \Delta_X w &\Omega_n,\\
    w_n = 0 & \partial \Omega_n,
\end{cases}
\]
which is given explicitly by
\[
w_n(x) = \int_{\Omega_n} g_n(x,y)(-\Delta_X w)(y) \dv_f(y).
\]
By the maximum principle
\begin{equation}\label{eq:subharm1}
w \leq w_n \quad \text{in }\Omega_n.
\end{equation}
Now, assume by contradiction that 
\[
\Delta_X w \geq \delta >0
\]
on some compact set $K \subset \Omega_n$, $n \gg 1$, with $\vol_f(K)>0$. Then
\[
w_n(x) \leq -\delta \int_K g_n(x,y) \dv_f(y)
\]
and since $g_n(x, \cdot)$  converges to $g_X(x,y) \equiv + \infty$ on compact sets we deduce
\[
w_n(x) \to -\infty \quad \text{as }n\to +\infty
\]
at any $x \not\in K$. This contradicts \eqref{eq:subharm1} and proves that $u$ must be $X$-harmonic. With minor changes, the argument can be adapted to the needed regularity assumption on $w$. Indeed, the distribution $\lambda=\Delta_Xw$ is a nonnegative Radon measure. On the smooth exhaustion $\Omega_n$, we may subtract from $w$ the $X$-harmonic extension of its boundary data to obtain a function $w_n$ satisfying $w\leq w_n\leq0$. Since $w$ has a $X$-harmonic majorant, we have 
\[
w_n(x)=-\int_{\Omega_n}g_{n}(x,y)\,d\lambda(y).
\]
If $\lambda\ne0$, we can choose a compact set $K$ with $\lambda(K)>0$ and $x\notin K$ such that, by monotone convergence, $w_n(x)\to-\infty$. Again, this contradicts \eqref{eq:subharm1}. \smallskip

\noindent {\bf Step 2.} Starting from the given $X$-subharmonic function $u$ we now construct a new function $w = f(u)$ where
\[
f(t)= \frac{1}{u^\ast-t + 1}, \quad t \in (-\infty,u^\ast).
\]
Clearly, $w \leq 1$. Moreover, since $f$ is strictly increasing and strictly convex, $w$ is still $X$-subharmonic. In fact
\[
\Delta_X f(u) = f'(u) \Delta_X u + f''(u) |\nabla u|^2.
\]
According to {\bf Step 1}, $w$ must be $X$-harmonic and this forces $u$ to be constant. The proof is completed.\\

\noindent{\bf $2. \Leftrightarrow 3.$:} We just note that $g_X(x,y) = g_{X_\mu}(y,x)$ and $1. \Leftrightarrow 2.$ holds with $X_\mu$ in place of $X$.
\end{proof}

For our purposes, it is convenient to introduce a non-variational capacity for the drifted Laplacian in the spirit of the capacity defined in \cite{Ba} (see also \cite{DKK}).

\begin{definition}\label{def:X-capacity}
Let $(M,\metric,\mu)$ be a metric measure space, and let $K \subset U\subset M$ two subsets such that $K$ is compact and $U$ is open. For a smooth vector field $Y$, we define the $Y$-capacity of the condenser $(K,U)$ by
$$\CAP_{Y,\mu}(K,U) = \inf\left\lbrace \int_U \vert \Delta_{Y} \varphi \vert d\mu \colon \varphi \in C^\infty_{c}(U), \  \varphi \geq 1 \ \text{in some neighborhood of }  K\right\rbrace.$$
when $U=M$ we simply write $\CAP_{Y,\mu}(K) = \CAP_{Y,\mu}(K,M)$.
\end{definition}

We also consider the following extended Green formula for bounded functions in the same vein of \cite{Gr1}.

\begin{definition}\label{def:X-GreenFormula}
The metric measure space $(M,\metric, \mu)$ is said to satisfy the $Y$-Green formula if
$$ \int_M \Delta_{Y} w \, d\mu = 0$$
for every function $w \in L^\infty(M)$ such that $\Delta_Y w \in L^1_{\text{loc}}(M,\mu)$ and $\left(\Delta_{Y} w\right)_+ \in L^1(M,\mu)$.
\end{definition}

The main result of this section is the following characterization of $X$-parabolicity.

\begin{theorem}\label{thm_equiv_Xparab}
Let $(M,\metric)$ be a Riemannian manifold endowed with an $X$-invariant measure $d\mu =e^{-f}d\vol$. The following are equivalent characterizations of the $X$-parabolicity:
\begin{itemize}
\item[1.] There is no positive $X$-Green function, i.e., $g_X(x,y) \equiv \infty$;
\item[2.] $\displaystyle G_X\psi(x) \equiv \infty$, for every $x \in M$ and every $\psi \in C^\infty_{c,+}(M)$;
\item[3.] $\CAP_{X,\mu}(\bar K)=0$ for every precompact set $K$;
\item[4.] $(M,\metric,\mu)$ satisfies the $X$-Green formula.
\end{itemize}
\end{theorem}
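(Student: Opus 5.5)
I plan to prove the cycle $1\Leftrightarrow 2$, $2\Rightarrow 3\Rightarrow 4\Rightarrow 1$, using Proposition~\ref{equiv_green_Xparab} to pass freely between absence of a Green function and $X$-parabolicity of $\Delta_X$ and of its dual $\Delta_{X_\mu}$.

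\textbf{$1\Leftrightarrow2$.} This is essentially Harnack. If $g_X\equiv\infty$ and $\psi\ge0$ is nonzero, then $\psi\ge c>0$ on a small ball $B$. Since $g_X^{\Omega_k}(x,\cdot)\uparrow\infty$ locally uniformly off the diagonal by the Harnack inequality, we get $G_X\psi(x)\ge c\int_B g_X(x,y)\,d\mu=\infty$. Conversely, if $g_X<\infty$, then $G_X\psi$ is finite, because $g_X(x,\cdot)$ is locally integrable: it has the singularity profile a) or b) near $x$ and is bounded elsewhere on compacts.

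\textbf{$2\Rightarrow3$.} Fix a precompact $K$ and a smooth exhaustion $\Omega_k$. I will use $\varphi_k=1-h_k$, where $h_k$ is the $\Delta_X$-harmonic measure of $\partial\Omega_k$ relative to $\Omega_k\setminus\bar K$. Since parabolicity (via 1 and Proposition~\ref{equiv_green_Xparab}) forces $h_k\to0$ locally uniformly, $\varphi_k$ tends to $1$ near $K$, after enlarging $K$ slightly. These functions are not smooth, so I regularize: take instead $\varphi=\eta(w)$ with $\eta$ a concave cutoff and $w$ the $X$-equilibrium potential, so that $\Delta_X\varphi=\eta'(w)\Delta_Xw+\eta''(w)|\nabla w|^2$. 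The invariance of $\mu$ gives $\int\Delta_X\varphi\,d\mu=0$ for compactly supported $\varphi$. Hence $\int|\Delta_X\varphi|\,d\mu=2\int(\Delta_X\varphi)_-\,d\mu$, and it suffices to make the negative part small. That negative part is concentrated near $K$, where $\eta'(w)\Delta_Xw$ acts as the equilibrium measure. Its total mass is the flux of $w$ through $\partial K$, which by the representation formula \eqref{green_repr} equals the ``$\mu$-capacity'' of $(K,\Omega_k)$. This mass tends to $0$ exactly when $G_X$ blows up, since the potential of the equilibrium measure is $\equiv1$ on $K$ while $g^{\Omega_k}\to\infty$. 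The nice mollification of Section~\ref{section:kernels} will be used to smooth $\eta(w)$ without increasing the $L^1(\mu)$ norm of the Laplacian; this is where the kernel $K_r$, with $\int K_r\,d\mu=1$, and the subharmonicity formula for $\Delta_X u_r$ enter.

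\textbf{$3\Rightarrow4$.} Let $w\in L^\infty$ with $(\Delta_Xw)_+\in L^1(\mu)$, and let $\varphi_k\in C^\infty_c$ satisfy $\varphi_k\ge1$ on $\bar K_k$, with $K_k\uparrow M$, and $\int|\Delta_X\varphi_k|\,d\mu\to0$. Truncate so that $0\le\varphi_k\le1$ by composing with a smooth convex-type cutoff and mollifying. Then
\[
\int\varphi_k\Delta_Xw\,d\mu=\int w\,\Delta_{X_\mu}\varphi_k\,d\mu .
\]
The capacity is defined with $\Delta_X$, not $\Delta_{X_\mu}$, so I would first show $\CAP_{X,\mu}=0\Leftrightarrow\CAP_{X_\mu,\mu}=0$. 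The intended route is to combine $2\Leftrightarrow3$ for both fields with Proposition~\ref{equiv_green_Xparab}; but $3\Rightarrow2$ is not yet available at this point, so I will instead prove $2\Rightarrow3$ for $X_\mu$ directly, noting that it is parabolic by Proposition~\ref{equiv_green_Xparab}. Then the right-hand side is bounded by $\|w\|_\infty\int|\Delta_{X_\mu}\varphi_k|\,d\mu\to0$. On the left, the $(\Delta_Xw)_+$ part converges by dominated convergence. The $(\Delta_Xw)_-$ part converges by Fatou/monotone convergence, which forces $(\Delta_Xw)_-\in L^1$ and gives $\int\Delta_Xw\,d\mu=0$.

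\textbf{$4\Rightarrow1$.} If $g_X<\infty$, take $w=\min(G_X\psi,1)$-type bounded functions, or more directly $w=-G_X\psi$ for small $\psi$. We have $\Delta_Xw=\psi\ge0$ in $L^1$, and $w$ is bounded because $G_X\psi\le$ its maximum by the maximum principle, after truncating smoothly. Then the Green formula gives $\int\psi\,d\mu=0$, a contradiction. I must ensure the smooth truncation keeps $\Delta_X$ of the right sign: using $\eta(G_X\psi)$ with $\eta$ concave increasing, bounded, and equal to the identity near $0$ gives $(\Delta_X\eta(G_X\psi))_+=0$, while $\Delta_X\eta(G_X\psi)\not\equiv0$.

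The main obstacle is $2\Rightarrow3$: producing smooth compactly supported test functions whose $\Delta_X$ has small $L^1(\mu)$ norm. Here non-symmetry prevents the usual Dirichlet-energy argument, and I rely on invariance of $\mu$ (zero total integral of $\Delta_X\varphi$) together with the mollifier.
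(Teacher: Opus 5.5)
There is a genuine gap: your step $3\Rightarrow4$ is circular, and as a result nothing in the proposal derives 1, 2 or 4 from 3. Your duality identity $\int\varphi_k\Delta_Xw\,d\mu=\int w\,\Delta_{X_\mu}\varphi_k\,d\mu$ needs test functions with $\int|\Delta_{X_\mu}\varphi_k|\,d\mu\to0$, i.e.\ $\CAP_{X_\mu,\mu}=0$, whereas hypothesis 3 only gives $\CAP_{X,\mu}=0$. Your workaround applies $2\Rightarrow3$ to $X_\mu$. But its hypothesis $g_{X_\mu}\equiv\infty$ is, via $g_{X_\mu}(x,y)=g_X(y,x)$, exactly statement 1 for $X$. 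Proposition~\ref{equiv_green_Xparab} only transfers parabolicity between $X$ and $X_\mu$; it cannot produce it from the vanishing of $\CAP_{X,\mu}$. What you actually prove is $1\Leftrightarrow2\Leftrightarrow4$, through $1\Rightarrow$ (3 for $X_\mu$) $\Rightarrow4\Rightarrow1$, together with $2\Rightarrow3$. So statement 3 is shown to be necessary for parabolicity, but not sufficient.

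The missing implication $3\Rightarrow1$ follows from the same duality. Suppose $g_X\not\equiv\infty$; then $g_{X_\mu}$ is finite off the diagonal as well. Take $0\le\psi\in C^\infty_c(M)$, $\psi\not\equiv0$, supported in the interior of $K$, and set $v=G_{X_\mu}\psi$. This $v$ is smooth, satisfies $\Delta_{X_\mu}v=-\psi$, and is bounded by $\max_{\supp\psi}v$ by the maximum principle on the exhaustion. Since $(\Delta_{X_\mu})^{\ast,\mu}=\Delta_X$, every admissible $\varphi$ satisfies
\[
\int_M\psi\,d\mu\le\int_M\varphi\,\psi\,d\mu=-\int_M\varphi\,\Delta_{X_\mu}v\,d\mu=-\int_M v\,\Delta_X\varphi\,d\mu\le\|v\|_\infty\int_M|\Delta_X\varphi|\,d\mu .
\]
Hence $\CAP_{X,\mu}(\bar K)>0$, which gives $3\Rightarrow1$ and closes your cycle.

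The paper avoids $X_\mu$ altogether in $3\Rightarrow4$ by following Grigor'yan--Masamune. For smooth $0<w<1$ and admissible $\varphi$, it picks a regular superlevel set $\Omega=\{\varphi-w>\varepsilon\}\supset K$. There the outward flux of $\nabla(\varphi-w)$ is nonpositive and $w=\varphi-\varepsilon$ on $\partial\Omega$. Invariance kills the flux of $\varepsilon(X-\nabla f)$, which yields $-\int_\Omega\Delta_Xw\,d\mu\le\int|\Delta_X\varphi|\,d\mu$; one then exhausts $M$ and repeats the argument with $1-w$.

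Once repaired, your duality route has the advantage of needing no smoothing of $w$. Your $2\Rightarrow3$ sketch is sound: the mass of the equilibrium measure tends to zero because $g^{\Omega_k}(x,\cdot)\to\infty$ uniformly on $\partial K$, and the kernel $K_r$ then does the smoothing. This is the dual of the paper's normalization $G^{U_n}_X\psi/b_n$.
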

\begin{proof}
\noindent{\bf $1. \Rightarrow 2.$:} Obvious.\\

\noindent{\bf $2. \Rightarrow 3.$:}
Without loss of generality we can take $K$ be a precompact set with smooth boundary. Let 
 $\psi \in C^\infty_{c,+}(M)$ be such that $\supp \psi$ is contained in the interior of $K$ and 
$$ \int_M \psi(x) \dv_f(x) = 1.$$
 Let $U_n \uparrow M$ be an increasing exhaustion of $M$ by precompact sets such that $\bar K \subset U_1$. 
 Set $b_n = \min_{x \in \partial K} G_X^{U_n}\psi(x)$, so that, by assumption, $b_n \to \infty$ as $n\to \infty$,  and define $F_{b_n} = \{x \in U_n \colon  G_X^{U_n}\psi(x) \geq b_n\}$. Since $\Delta_{X} G_X^{U_n}\psi(x) = -\psi(x) \leq 0$, the minimum principle gives
$$ \min_{x \in K}  G_X^{U_n}\psi(x) = \min_{x \in \partial K} G_X^{U_n}\psi(x) = b_n$$
and then, $K \subset F_{b_n}$ for every $n\geq 1$. Since the $X$-capacity is monotone, we have
$$ \CAP_{X,\mu}(\bar K,U_{n+1}) \leq \CAP_{X,\mu}(F_{b_n},U_{n+1}).$$

Now, we consider the function $\phi = G_X^{U_n}\psi \in C^\infty_0(U_n)$, and extend it by $0$ in the subset $U_{n+1}\backslash U_n$. We then approximate $\phi$ by a function $\phi_r \in C^\infty_c(U_{n+1})$ constructed in Section~\ref{section:kernels} using $g(x,y)\doteq g_X^{U_{n+1}}(x,y)$, which satisfies
$$\phi_r \rightarrow \phi \ \ \text{locally uniformly on compact subsets of }   U_n,$$
and 
$$ \Delta_X \phi_r(x) =  \int_{U_{n+1}} K_r(y,x) \Delta_{X} \phi(y) \dv_f(y),$$
for every $y \in U_{n+1}$.
Moreover, we can also see that 
$$\Delta_{X} \phi = - \psi + \nu,$$ 
where $\nu$ is a Radon measure concentrated on $\partial U_n$, whose total mass is given by
$$\nu(M) = \int_{U_n}\psi \dv_f. $$
Identifying, with a slight abuse of notation, the total variation $|\Delta_X \phi|$ with an $L^1$-function, we have
$$\int_{U_{n+1}} \vert \Delta_{X} \phi\vert \dv_f = 2 \int_{U_n} \psi \dv_f = 2$$
which, together with \eqref{kernel_Kr}, yields
\begin{eqnarray*}\int_{U_{n+1}} \vert \Delta_{X} \phi_r\vert \dv_f &\leq &   \int_{U_{n+1}}\vert \Delta_{X} \phi(y)\vert \dv_f(y) \int_{U_{n+1}}K_r(y,x) \dv_f(x) = 2.
\end{eqnarray*}

The function $u(x) = 2b_n^{-1} \phi_r(x)$ defined on $U_{n+1}$ is a test function for $\CAP_{X,\mu}(F_{b_n},U_{n+1})$ and satisfies
\begin{eqnarray*}
\CAP_{X,\mu}(F_{b_n},U_{n+1}) &\leq & \int_{U_{n+1}} \vert \Delta_{X} u(x)\vert \dv_f(x)\\[0.2cm]
&=& \frac{2}{b_n }\int_{U_{n+1}}  \vert \Delta_X \phi_r(x)\vert \dv_f(x)\\[0.2cm]
&\leq & \frac{4}{b_n }.
\end{eqnarray*}
We conclude by letting $n \rightarrow \infty$.\\

 \noindent{\bf $3. \Rightarrow 4.$:}
The proof follows the lines of \cite[Theorem 1.1]{GM}. Assume that $\CAP_{X,\mu}(\bar K) = 0$ for every precompact subset $K\subset M$, and let $w\in L^\infty(M)$ 
be a non-constant function such that {$\Delta_{X} w\in L^1_{loc}$} and $(\Delta_{X} w)_+ \in L^1(M,\mu)$. 
By translating and scaling, we may assume that $0<\inf_{M} w <\sup_{M} w < 1$. 

Given a precompact set $K$ and a precompact open set $U$ with $K \subset U$, let $\varphi
\in C^\infty_c(U)$ be such that $\varphi \geq 1$ in a neighborhood of $K$ and 
\[
\int_U|\Delta_{X} \varphi|d\mu \leq 2 \CAP_{X,\mu}(K, U).
\]
Assume for the moment that $w$ is smooth on $U$. We set
 $v = \varphi - w$ and note that $v$ attains a positive minimum on $\partial K$ and $v < 0$ on $\partial U$. 
By Sard's theorem, there is a regular value $\varepsilon \in (0,1)$ for $v$ such that $K \subset \Omega = \{v > \varepsilon\} \subset U$ and
$$\int_{\partial \Omega} \frac{\partial v}{\partial \eta} d\mathcal{H}^{m-1}_f  \leq 0,$$
where $\eta$ is the outward normal vector field along $\partial\Omega$. 
Thus,
\begin{equation}\label{ineq_funct}
\int_{\partial \Omega} \frac{\partial w}{\partial \eta}d\mathcal{H}^{m-1}_f \geq \int_{\partial \Omega} \frac{\partial \varphi}{\partial \eta}  d\mathcal{H}^{m-1}_f.
\end{equation}

Now, since $\dv_f$ is an invariant measure, we can use that $\div_f(X -\nabla f) = 0$ to obtain
\begin{eqnarray*}
    -\int_\Omega \Delta_{X} w \dv_f &=& -\int_\Omega \Delta_f w d\vol_f + \int_\Omega \langle X- \nabla f, \nabla w\rangle \dv_f \\[0.2cm]
    &=& -\int_{\partial \Omega} \frac{\partial w}{\partial \eta}  d\mathcal{H}^{m-1}_f + \int_{\partial \Omega} w \langle X- \nabla f, \eta \rangle d\mathcal{H}^{m-1}_f .
\end{eqnarray*}
Using that $w = \varphi - \varepsilon$ on $\partial \Omega$ and the invariance of the measure, inequality \eqref{ineq_funct} yields
\begin{eqnarray*}
   - \int_\Omega \Delta_{X} w \dv_f &\leq &  -\int_{\partial \Omega} \frac{\partial w}{\partial \eta}  d\mathcal{H}^{m-1}_f + \int_{\partial \Omega} \varphi \langle  X-\nabla f , \eta \rangle d\mathcal{H}^{m-1}_f \\[0.2cm]
    &=& -\int_\Omega \Delta_f \varphi d\vol_f + \int_\Omega \langle X- \nabla f, \nabla \varphi \rangle \dv_f\\[0.2cm]
    &=& -\int_\Omega \Delta_{X} \varphi \dv_f  
     \\[0.2cm]
    &\leq&  \int_U \vert\Delta_{X} \varphi\vert \dv_f\leq 
 2 \CAP_{X,\mu}(K,U).
\end{eqnarray*}

When $w$ is not smooth the same inequality can be obtained by applying Friedrichs mollifiers (cf. \cite[Proposition 5.14]{BMS}). Indeed, since $w \in L^1_{loc}$ with $\Delta_X w \in L^1_{loc}$, Kato inequality yields the existence of a smooth sequence $w_k$ such that $w_k \to w$ in $L^1_{loc}$ and $\Delta_X w_k \to \Delta_X w$ in $L^1_{loc}$. Thus, we may apply the argument to $w_k$ on $U$ and then take the limit.

Now, let $U_n$ be an increasing exhaustion of $M$ by precompact open sets such that $\overline U_n\subset U_{n+1}$ for every $n$. Since the capacity of every compact set is zero, for every $n$ there exists $m(n)>n$ such that 
\[
\CAP_{X,\mu}(\overline{U}_n, U_{m(n)})<\frac 1n.
\]
Applying the above argument to the condenser $(\overline{U}_n, U_{m(n)})$, for every $n$
we find an open subset with smooth boundary $\Omega_n$ with  $\overline{U}_n\subset\Omega_n\subset  U_{m(n)} $ such that 
\[
-\int_{\Omega_n}\Delta_{X} w \dv_f = -\int_{\Omega_n} (\Delta_{X} w)_+ \dv_f +\int_{\Omega_n} (\Delta_{X} w)_- \dv_f
\leq 
2\CAP_{X,\mu}(\overline{U}_n, U_{m(n)})<\frac 2 n.
\]
Since $ (\Delta_{X} w)_+\in L^1(M, \mu)$, letting $n\to \infty$ and applying monotone convergence, we deduce that  $(\Delta_{X} w)_-\in L^1(M, \mu)$, so that $\Delta_{X} w \in L^1(M, \dv_f)$ and  
\[
\int_{M}\Delta_{X} w \dv_f\geq 0.
\]
The argument applied to $1-w$ then shows that 
\[
\int_{M}\Delta_{X} w \dv_f\leq 0, 
\]
and we conclude that 
\[
\int_{M}\Delta_X w \dv_f = 0.
\]
\noindent{\bf $4. \Rightarrow 1.$:} To prove this implication, we follow the same argument used for $1. \Rightarrow 2.$. Let $\varphi \in C_{c,+}^\infty(M)$, and let $g_X(x,y)$ denote the finite positive Green function. Then, $v = G_X\varphi$ belongs to $L^\infty$ and satisfies $\Delta_{X} v = - \varphi \leq 0$. Thus, 
$$\int_M \Delta_X v \dv_f = - \int_M \varphi \dv_f < 0,$$
which contradicts the $X$-Green formula.
\end{proof}

As anticipated in Remark~\ref{rmk_brezis-marcus-ponce-equiv}, Theorem~\ref{thm_equiv_Xparab} extends to the $X$-Laplacian an equivalence of the vanishing of different capacities first established by Brezis, Marcus and Ponce \cite{BMP} in the Euclidean setting.

\begin{corollary}\label{cor:equivalence}
Let $M_f$ be a weighted manifold. The following are equivalent:
\begin{enumerate}
    \item[i)] $M_f$ is $f$-parabolic.\smallskip 
    \item[ii)] The $f$-capacity of all precompact sets vanishes. Namely:
    $$ \CAP_f(K) \doteq \inf\left\lbrace \int_M \vert \nabla \varphi \vert^2 d\vol_f \colon \varphi \in C^\infty_{c}(M), \varphi \geq 1 \  \text{in a neighborhood of }  K\right\rbrace = 0$$
    for every precompact set $K\subset M$. \smallskip
\item[iii)] The $X=\nabla f$-capacity of all precompact sets vanishes. Namely:
$$ \CAP_{\Delta_f}(K) \doteq \inf\left\lbrace \int_M \vert \Delta_f \varphi \vert d\vol_f \colon \varphi \in C^\infty_{c}(M), \varphi \geq 1 \  \text{in a neighborhood of }  K\right\rbrace = 0,$$
for every precompact set $K\subset M$.    
\end{enumerate}    
\end{corollary}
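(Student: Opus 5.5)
The plan is to derive the corollary from Theorem~\ref{thm_equiv_Xparab} and Proposition~\ref{equiv_green_Xparab}, applied with $X=\nabla f$ and $\mu=\vol_f$. Since $\Delta_f$ is symmetric with respect to $\vol_f$, this measure is $\nabla f$-invariant; indeed $\div_f(X-\nabla f)=\div_f 0=0$, so \eqref{eq:X-inv} is satisfied. Hence all the hypotheses of those two results hold with $X=\nabla f$, $d\mu=e^{-f}\dv$, and the $X$-capacity $\CAP_{X,\mu}$ is exactly $\CAP_{\Delta_f}$.

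\textbf{i) $\Leftrightarrow$ iii).} By Proposition~\ref{equiv_green_Xparab}, $f$-parabolicity is equivalent to the nonexistence of a positive Green function for $\Delta_f$. By item 3 of Theorem~\ref{thm_equiv_Xparab}, this is in turn equivalent to $\CAP_{\Delta_f}(\bar K)=0$ for every precompact $K$. One small point needs care: iii) is stated for the precompact set $K$ rather than its closure. Monotonicity handles this, since any test function that is $\geq 1$ near $\bar K$ is also $\geq 1$ near $K$, and conversely a neighborhood of $K$ can be enlarged to a neighborhood of $\bar K$ after shrinking. I will therefore read iii) as concerning $\bar K$, or note that the two formulations agree because a neighborhood of $\bar K$ is what is really used.

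\textbf{i) $\Leftrightarrow$ ii).} This is the classical characterization of parabolicity of a weighted manifold through the vanishing of the Dirichlet (Newtonian) capacity; see \cite{Gr3,AMR}. I will invoke it as known rather than reprove it. If a self-contained argument is wanted, two directions are needed:
\begin{itemize}
\item[$\Leftarrow$] If $\CAP_f$ vanishes, take an $f$-superharmonic $0\le w$. The standard truncation argument tests the inequality with $\varphi^2/w$ and shows $\int\varphi^2|\nabla\log w|^2\dv_f$ is controlled by $\int|\nabla\varphi|^2\dv_f$, which forces $w$ to be constant.
\item[$\Rightarrow$] If $M_f$ is $f$-parabolic, use the equilibrium potentials of $(K,\Omega_n)$. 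Their energies decrease to the energy of the limiting potential, and by parabolicity that limit is the constant $1$, so the energies tend to $0$.
\end{itemize}

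The main obstacle is only a bookkeeping one: checking that the Green function and Liouville notions used in Section~\ref{section:X-parab} coincide with the classical $f$-parabolicity when $X=\nabla f$. This is immediate, because the $X$-Liouville definition with $X=\nabla f$ is literally the $f$-Liouville property. No genuinely new estimate is required; the real content is carried by Theorem~\ref{thm_equiv_Xparab}.
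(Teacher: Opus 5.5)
Your proposal is correct and follows the route the paper intends; the paper states the corollary without proof, as a consequence of Theorem~\ref{thm_equiv_Xparab}. With $X=\nabla f$ and $\mu=\vol_f$, Proposition~\ref{equiv_green_Xparab} and Theorem~\ref{thm_equiv_Xparab} give i)$\Leftrightarrow$iii), and i)$\Leftrightarrow$ii) is the classical capacity characterization of $f$-parabolicity. Your $K$ versus $\bar K$ justification is garbled but harmless: the two universal statements agree because $\CAP_{\Delta_f}(K)\le\CAP_{\Delta_f}(\bar K)\le\CAP_{\Delta_f}(U)$ for any precompact open $U\supset\bar K$.
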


We end this section by showing a hierarchy between the $X$-capacity and the $f$-capacity for every $X$-invariant measure of the form $\mu = e^{-f}\dv$, for some smooth function $f$.

\begin{proposition}\label{controllo_fCap_XCap}
Let $(M,\metric)$ be a complete Riemannian manifold endowed with an $X$-invariant measure $\mu= e^{-f}\vol$. Then, for every condenser $(K,U)$, with $K\Subset U$ and $U$ open, one has
\[
\CAP_f(K,U) \leq \frac{1}{2}\CAP_X(K,U).
\]
\end{proposition}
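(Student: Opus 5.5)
The plan is a truncation argument in the spirit of Brezis--Marcus--Ponce. The proof turns a test function for $\CAP_X(K,U)$ into a test function for $\CAP_f(K,U)$. It uses the invariance of $\mu$ twice: once through $\div_f(X-\nabla f)=0$ from \eqref{eq:X-inv}, and once through $\int_M \Delta_X\vp\,d\mu=0$ for $\vp\in C^\infty_c(M)$.

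Fix $\vp\in C^\infty_c(U)$ with $\vp\ge 1$ in a neighbourhood of $K$. For small $\varepsilon>0$ choose a smooth nondecreasing $T_\varepsilon:\rr\to[0,1]$ with the following properties: $T_\varepsilon(t)=0$ for $t\le 0$, $T_\varepsilon(t)=1$ for $t\ge 1$, and $0\le T_\varepsilon'\le 1+\varepsilon$. Set $u=T_\varepsilon(\vp)\in C^\infty_c(U)$. Then $u=1$ near $K$, so $u$ is admissible for $\CAP_f(K,U)$, and
\[
\int_U|\nabla u|^2\dv_f\le (1+\varepsilon)\int_U T_\varepsilon'(\vp)|\nabla\vp|^2\dv_f .
\]
The key identity is
\[
\int_U T_\varepsilon'(\vp)|\nabla \vp|^2\dv_f=-\int_U \Big(T_\varepsilon(\vp)-\tfrac12\Big)\Delta_X\vp\dv_f .
\]
To prove it, write $\Delta_X\vp=\Delta_f\vp-\langle Z,\nabla\vp\rangle$ with $Z=X-\nabla f$.

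First, the $f$-divergence theorem gives
\[
-\int T_\varepsilon(\vp)\Delta_f\vp\dv_f=\int T'_\varepsilon(\vp)|\nabla\vp|^2\dv_f .
\]
This is legitimate because $T_\varepsilon(\vp)$ has compact support.

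Second, set $G_\varepsilon(t)=\int_0^t T_\varepsilon$. Then $G_\varepsilon(\vp)\in C^\infty_c(U)$, since $G_\varepsilon(0)=0$ and $G_\varepsilon$ vanishes on $(-\infty,0]$. Hence
\[
\int T_\varepsilon(\vp)\langle Z,\nabla\vp\rangle\dv_f=\int\langle Z,\nabla G_\varepsilon(\vp)\rangle\dv_f=-\int G_\varepsilon(\vp)\div_f Z\dv_f=0 .
\]
Together these give $-\int T_\varepsilon(\vp)\Delta_X\vp\dv_f=\int T_\varepsilon'(\vp)|\nabla\vp|^2\dv_f$.

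Third, we may subtract the constant $\frac12$ for free, since invariance of $\mu$ gives $\int_U\Delta_X\vp\dv_f=0$ (Example~\ref{example:X-inv}).

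Since $0\le T_\varepsilon\le1$, we have $|T_\varepsilon(\vp)-\frac12|\le\frac12$. Combining this with the two displays above yields
\[
\CAP_f(K,U)\le\int_U|\nabla u|^2\dv_f\le\frac{1+\varepsilon}{2}\int_U|\Delta_X\vp|\,d\mu .
\]
Taking the infimum over admissible $\vp$ and then letting $\varepsilon\to0$ gives the claim.

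The only delicate point is the centring trick. Without subtracting $\frac12$, the same argument gives only the constant $1$. The factor $\frac12$ comes precisely from the vanishing of $\int\Delta_X\vp\,d\mu$, which is exactly the invariance of $\mu$. A minor technical point is choosing $T_\varepsilon$ smooth with slope at most $1+\varepsilon$, so that $u$ is a genuine smooth test function. One could alternatively use the Lipschitz truncation $\min(\vp^+,1)$ and invoke the density of smooth functions in the definition of $\CAP_f$. Completeness of $M$ plays no role here, since all functions involved are compactly supported in $U$.
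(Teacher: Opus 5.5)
Your proof is correct and follows essentially the same route as the paper: truncate $\varphi$ between the levels $0$ and $1$, integrate by parts against $\Delta_f$, kill the drift term using $\div_f(X-\nabla f)=0$, and gain the factor $\frac12$ by centring with $\int_M \Delta_X\varphi\,d\mu=0$. The differences are only cosmetic. The paper uses the Lipschitz truncation $\tilde\varphi=\min\{1,\varphi_+\}$ (admissible via the $H^1_0$ characterization of $\CAP_f$) and disposes of the drift term by writing $\varphi\nabla\tilde\varphi=\frac12\nabla\tilde\varphi^2$; you instead use a smooth truncation with slope at most $1+\varepsilon$ and the primitive $G_\varepsilon$.
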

\begin{proof}
Let $\varphi \in C^\infty_c(U)$ be a function satisfying $\varphi\geq 1$ in some neighborhood of $K$, and consider $\tilde \varphi = \min\{1,\varphi_+\} \in C_c(U)\cap H^1_0(U)$ such that $0\leq \tilde \varphi \leq 1$. We recall that, by the variational characterization of the $f$-capacity and the standard truncation argument, we may compute $\CAP_f(K,U)$ using functions in $H^1_0(U)$ that are greater than or equal to $1$ in a neighborhood of $K$. Since $\widetilde\varphi\equiv 1$ in a neighborhood of $K$, it is an admissible competitor for $\CAP_f(K,U)$, and thus
\begin{eqnarray*}
\text{\rm Cap}_f(K,U) &\leq & \int_U \vert \nabla \tilde \varphi\vert^2 \dv_f \\[0.2cm]
&=& \int_U \langle \nabla \varphi, \nabla \tilde \varphi\rangle \dv_f\\[0.2cm]
&=& -\int_U  \tilde \varphi\Delta_f \varphi \dv_f.
\end{eqnarray*}
Using the definition of $\Delta_X$, and the fact that $0\leq \tilde \varphi \leq 1$, we obtain
\begin{eqnarray*}
\text{\rm Cap}_f(K,U) &\leq &  -\int_{U}  \tilde \varphi \Delta_X \varphi \dv_f -  \int_U  \tilde \varphi \langle X-\nabla f, \nabla \varphi \rangle \dv_f \\[0.2cm]
&= & - \int_{U}  \tilde \varphi \Delta_X \varphi \dv_f +  \int_{U}  \varphi \langle X-\nabla f, \nabla \tilde \varphi \rangle \dv_f \\[0.2cm]
&= & - \int_{U}  \tilde \varphi \Delta_X \varphi \dv_f +  \int_{U}  \langle X-\nabla f, \frac{1}{2}\nabla \tilde \varphi^2 \rangle \dv_f \\[0.2cm]
&= &  \int_{U}  (\frac{1}{2} - \tilde\varphi) \Delta_X \varphi \dv_f - \frac{1}{2} \int_{U} \tilde \varphi^2 \div_f(X-\nabla f) \dv_f \\[0.2cm]
&\leq & \frac{1}{2} \int_{U}  \vert \Delta_X \varphi \vert \dv_f, 
\end{eqnarray*}
where we have used the $X$-invariance of the measure. Therefore, taking the infimum over all $\varphi\in C^\infty_c(U)$ such that $\varphi\geq 1$ on $K$, we conclude that
$$\text{\rm Cap}_f(K,U) \leq \frac{1}{2}\text{\rm Cap}_X(K,U).$$ 
\end{proof}

\begin{remark}
We point out that, under the additional $L^2$-defect assumption on the $X$-invariant measure, see Definition \ref{def:L2defect}, the converse implication for the vanishing of the capacities also holds. More precisely, in the proof of Theorem \ref{thm_X-parab} we will establish that
$$
\CAP_f(K,U)=0
\qquad\Longrightarrow\qquad
\CAP_X(K,U)=0.
$$
\end{remark}

\section{Volume growth and $X$-parabolicity}\label{section:volume}

Let $M_f$ be a weighted manifold. From the point of view of the weighted Laplacian $\Delta_f$, the invariant measure $\vol_{f}$ is a {\it canonical measure} according to the following
\begin{lemma}\label{lem_symm_vol_f}
    Let $M_f$ be endowed with a $f$-invariant measure $\mu$ with respect to which $\Delta_f$ is symmetric, i.e., $\Delta_f^{\ast, \mu} = \Delta_f$. Then $\mu = c \cdot \vol_f$ for some constant $c >0$. Namely, $\vol_f$ is essentially the unique invariant measure for $\Delta_f$ such that $\Delta_f$ is symmetric.
\end{lemma}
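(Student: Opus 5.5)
We write $\mu = e^{-h}\vol$ for a smooth function $h$; Lemma~\ref{lemma-abstractexistence}(b) guarantees this form for any invariant measure. In this notation $\vol_f$ corresponds to $h=f$. By Example~\ref{example:X-inv}, applied with $X=\nabla f$, the formal adjoint of $\Delta_f$ with respect to $\mu$ is
\[
\Delta_f^{\ast,\mu}=\Delta_{X_\mu}, \qquad X_\mu = -\nabla f + 2\nabla h .
\]
The symmetry hypothesis $\Delta_f^{\ast,\mu}=\Delta_f$ says that $\Delta_{X_\mu}\varphi=\Delta_{\nabla f}\varphi$ for every $\varphi\in C^\infty_c(M)$. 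Equivalently,
\[
\langle -\nabla f+2\nabla h-\nabla f,\nabla\varphi\rangle = 2\langle \nabla h-\nabla f,\nabla \varphi\rangle = 0
\]
for all test functions $\varphi$.

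Testing against functions whose gradient at a given point is an arbitrary vector, we get $\nabla(h-f)\equiv 0$ on $M$. Since $M$ is connected, $h-f$ is a constant $c_0$. Hence $\mu = e^{-c_0}\vol_f$, and we set $c=e^{-c_0}>0$.

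To avoid relying on the general adjoint formula, we can also verify the symmetry condition directly. Write $\mu = u\,\vol_f$ with $u=e^{f-h}$. Symmetry means
\[
\int \psi\,\Delta_f\varphi\,u\,d\vol_f=\int\varphi\,\Delta_f\psi\,u\,d\vol_f \quad \text{for all } \varphi,\psi\in C^\infty_c(M).
\]
Integrating by parts with respect to $\vol_f$ and using the symmetry of $\Delta_f$ in $L^2(\vol_f)$, the difference of the two sides reduces to
\[
\int \bigl(\varphi\langle\nabla u,\nabla\psi\rangle-\psi\langle\nabla u,\nabla\varphi\rangle\bigr)\,d\vol_f .
\]
Choosing $\psi\equiv 1$ near $\operatorname{supp}\varphi$ kills the first term, leaving $\int\langle\nabla u,\nabla\varphi\rangle\,d\vol_f=0$ for all $\varphi$. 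This says $u$ is $f$-harmonic, which alone is not enough. We therefore go back to the full identity with general $\psi$: it forces the pointwise relation $\varphi\nabla\psi-\psi\nabla\varphi\perp\nabla u$ for all $\varphi,\psi$. At any point we can choose $\psi=1$, $\varphi=0$ and $\nabla\varphi$ arbitrary, which gives $\nabla u=0$.

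The only real obstacle is this last step: extracting the pointwise conclusion $\nabla u=0$ from the integral identity. It is handled by the localization choice just described, or more cleanly by comparing the first-order coefficients of the two operators $\Delta_{X_\mu}$ and $\Delta_f$ as in the first argument.
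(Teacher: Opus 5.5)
Your first argument is correct and is essentially the paper's proof. The paper writes $\mu=e^{-g}\vol_f$ and computes $\Delta_f^{\ast,\mu}\varphi=e^{g}\Delta_f(e^{-g}\varphi)$. It then uses invariance to drop the zero-order term and reduces symmetry to $\langle\nabla g,\nabla\varphi\rangle=0$, which is exactly your comparison of first-order coefficients with $g=h-f$.

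Your second, ``direct'' route is not needed. As written, its passage from the integral identity to the pointwise relation $\varphi\nabla\psi-\psi\nabla\varphi\perp\nabla u$ is unjustified. It would follow, for instance, by integrating $\int\psi\langle\nabla u,\nabla\varphi\rangle\,d\vol_f$ by parts to get $\int\varphi\,\bigl(2\langle\nabla u,\nabla\psi\rangle+\psi\Delta_f u\bigr)\,d\vol_f=0$ for all $\varphi$.
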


\begin{proof}
    Since $\mu$ is invariant for $\Delta_f$ then $d\mu = e^{-g}\dv_f$ for some $g \in C^\infty(M)$. By assumption
    \[
    \Delta_f^{\ast,\mu} = \Delta_f
    \]
    that is, for every $\varphi \in C_c^\infty(M)$,
    \[
    \Delta_f\varphi = e^{g}\Delta_f (e^{-g}\varphi).
    \]
    This latter, in turn, can be simplified in the form
    \[
\langle \nabla g, \nabla \varphi\rangle = 0 \quad \text{for all $\varphi \in C^\infty_c(M)$}.
    \]
 Thus, $g$ is constant, and this completes the proof.
\end{proof}

The symmetry of $\Delta_f$ with respect to $\vol_f$, a trivial consequence of the (weighted) integration by parts, gives rise to the natural relations between the function, potential and spectral theory of $\Delta_f$ and the  geometry of the underlying space $M_f$. Among these important relations we have the parabolicity of complete weighted manifolds with moderate weighted volume growth.\smallskip

The main difficulty in understanding if and to what extent volumes are related to $X$-parabolicity is that we have to detect, even in this very general context, somewhat canonical measures, if any, within the class of natural measures associated with the $X$-Laplacian, namely, the class of $X$-invariant measures.\smallskip

In the following example we show that on a complete manifold a moderate  mass-growth of an $X$-invariant measure does not, in general, imply the parabolicity of the $X$-Laplacian. Therefore, the choice of the invariant measure is crucial. As we will see, this fact is already visible at the level of the $f$-Laplacian.\smallskip

\begin{example}\label{ex-1}
In this first example, we are going to consider a  warped product manifold $M = \mathbb{R}\times_\s \mathbb{S}^{m-1}$ endowed with the  metric $\metric  = dr \otimes dr + \sigma^2(r)g_{\ss^{m-1}}$ where $\s: \rr \to \rr_{>0}$ is an even function.\smallskip

Set $h(r) = \omega_m \sigma^{m-1}(r)$, where $\omega_m$ is the measure of the $(m-1)$-dimensional unit sphere in $\mathbb{R}^m$ and note that
\[
\vol(M) < \infty \Longleftrightarrow \int^\infty h(r)dr < \infty.\smallskip
\]
Next, for a smooth function $f:M\to \rr$ to be specified later, we consider the weighted manifold $M_f = (M,\metric ,\vol_f)$. Note that, if $f$ is harmonic, namely
\[
\Delta f = 0 \ \ \text{on }M,
\]
then the Riemannian measure $\vol$ of $M$ is invariant for $\Delta_f$. Indeed, in this situation,
\[
\Delta_f^{\ast,\vol} = \Delta_{-f}
\]
and the constant function $u \equiv 1$ solves $\Delta_{-f}u=0$.\smallskip

Finally, we define the weighted volume of the shells $\Omega_R = (0,R) \times \ss^{m-1}$ and $\Omega_{-R} = (-R,0) \times \ss^{m-1}$ in $M_f$ as
\[
W_f(\pm R) = \pm \int_0^{\pm R} h(r)e^{-f(r)}dr,
\]
and we introduce the following constants
\[
I_f^{\pm} = \pm \int_0^{\pm \infty} \frac{dR}{W_f'(R)}.
\]
According to Proposition \ref{prof-f-parabolic-double}, $M_f$ is parabolic if and only if $I_f^{\pm}=+\infty$.\smallskip

With this preparation, we are going to choose the warping function $\s$ and the harmonic function $f$ in such a way $\int^{+\infty} h(r) dr < +\infty$ and $I^{\pm}_f < +\infty$. This will give us that, for the $\Delta_f$-invariant measure $\mu=\vol$, we have  $\mu(M)<+\infty$ but $M$ is non-parabolic with respect to $\Delta_f$.

To this end, let $m=2$, and choose the even function $\sigma$ such that $\sigma(r) = e^{-r}$ for $r>1$. Moreover, setting $f(r) = - \int_{0}^r \sigma(t)^{-1} dt$, it is easy to see that $\Delta f = 0$. A straightforward computation shows that $I^{+}_{f} < +\infty$ so that $M_{f}$ is not $f$-parabolic. Analyzing the opposite end, we can show that $I^{-}_{- f} < +\infty$ so that $M_{f}$ is not $-f$-parabolic.  
\end{example}


In light of the above examples, we need to detect some condition on an $X$-invariant measure that gives it a somehow ``canonical" behavior with respect to parabolicity. To this end, recall that an $X$-invariant measure $\mu$ has always a smooth, positive density, namely, $d\mu = u \dv$ with $0< u \in C^{\infty}(M)$. We let $f = -\log u$ so that $d\mu = e^{-f}\dv$ and we propose the following
\begin{definition}\label{def:L2defect}
Let $d\mu=e^{-f} \dv$ be a smooth, positive invariant measure for $\Delta_X$. The $L^2$-defect for $\Delta_X$ to be symmetric with respect to $\mu$ is defined by 
    \[
    \d_f(\Delta_X) = \| X - \nabla f \|_{L^2(M,\dv_f)} = \left( \int_M |X - \nabla f |^2 \dv_f \right)^{\frac{1}{2}}.
    \]
    Say that $\mu$ is a {\it canonical measure} for $\Delta_X$ if $\d_f(\Delta_X)<+\infty$.
\end{definition}

\begin{example}[Trivial example]
    Let $\mu=e^{-f}\dv$ be an $X$-invariant measure. Then $\Delta_X$ is $\mu$-symmetric, i.e. $\Delta_X^{\ast,\mu} = \Delta_X$, if and only if $\d_f(\Delta_X)=0$ and, hence, $\Delta_X = \Delta_f$.
\end{example}

\begin{example}[Non-gradient example]
    We consider the Riemannian manifold $(\rr^m,\< \cdot,\cdot\>)$ with $m \geq 3$, $\< \cdot,\cdot\>_\b = \b^2 \< \cdot,\cdot\>_{\rr^m}$ and $0<\b \in C^\infty (\rr^m)$. We note that:
    \begin{itemize}
        \item The smooth vector field $X_x=(\a(x),0,\cdots,0)$ on $\rr^m$ is non-gradient as soon as $\b^2\a \in C^\infty(\rr^m)$ depends on some of the variables $x_2,\cdots,x_m$.
        \item The volume measure $d\mu = e^{-0} \dv$ is $X$-invariant provided
        \[
        \div X = \b^{-m}\partial_{x_1} ( \b^{m} \a) = 0.
        \]
        \item The $L^2$-defect for $\Delta_X$ to be symmetric with respect to $\vol$ (corresponding to $f \equiv 0$) satisfies $\d_0(\Delta_X)<+\infty$, and hence $\mu$ is canonical, if and only if
        \[
        \int_{\rr^m} \a^2(x) \b^{m+2}(x) dx < +\infty.
        \]
    \end{itemize}
With this preparation, if we choose $\b(x) = (1+|x|^2)^{100}$ and $\a(x) = \b(x)^{-m}$ we obtain that $(\rr^m,\metric_\b )$ is a complete manifold  whose volume measure $\vol$ is a canonical measure for $\Delta_X$.
\end{example}

\begin{example}[Non-canonical measure]
Let $(M,\metric )$ be any noncompact parabolic manifold, e.g., a complete Riemannian manifold satisfying $\vol(B_R(o))= O(R^2)$. Pick any non-constant harmonic function $g:M \to \rr$. Then, as we have already noticed, $d\mu = e^0 \dv$ is an invariant measure for $\Delta_g$. However, $\mu$ cannot be canonical, i.e., its $L^2$-defect satisfies $\d_0(\Delta_g) = +\infty$. To see this, simply note that, with $X=\nabla g$ and $f \equiv 0$, the defect reads
\[
\d_0(\Delta_g)= \left(\int_M |\nabla g|^2 \dv\right)^\frac{1}{2}
\]
so we can use the following classical result that, for possible future applications, we state in weighted form.
\begin{proposition}\label{prop:L2gradient}
Let the weighted manifold $M_f$ be $f$-parabolic. Then, for every nonconstant $f$-harmonic function $u \in C^\infty(M)$ it holds $|\nabla u| \not \in L^2(\dv_f)$.
\end{proposition}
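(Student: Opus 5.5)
Suppose, to get a contradiction, that $u$ is a nonconstant $f$-harmonic function with $|\nabla u|\in L^2(\dv_f)$. The plan is to show that $u$ must then be constant, using the cutoff functions supplied by $f$-parabolicity (Corollary~\ref{cor:equivalence}) together with a weighted Caccioppoli-type integration by parts.

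\emph{Step 1: cutoffs.} Fix a precompact set $K$. Since $M_f$ is $f$-parabolic, Corollary~\ref{cor:equivalence} gives $\CAP_f(K)=0$. Hence there are $\varphi_k\in C^\infty_c(M)$ with $\varphi_k\geq 1$ near $K$ and $\int_M|\nabla\varphi_k|^2\dv_f\to 0$. Truncating, we may assume $0\leq\varphi_k\leq 1$ and $\varphi_k\equiv 1$ on $K$; these are Lipschitz, which is sufficient.

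\emph{Step 2: the identity.} Let $c$ be a constant to be chosen, and test the equation $\Delta_f u=0$ against $\varphi_k^2(u-c)$, using weighted integration by parts:
\[
0=\int_M \varphi_k^2|\nabla u|^2\dv_f+2\int_M \varphi_k(u-c)\<\nabla u,\nabla\varphi_k\>\dv_f .
\]
The standard route would apply Young's inequality to the cross term. That produces $\int (u-c)^2|\nabla\varphi_k|^2\dv_f$, which requires $u$ to be bounded, and $u$ need not be bounded.

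\emph{Step 3: a better test function.} To avoid this, I would test the equation against $\varphi_k^2$ alone. This gives
\[
\int_M \varphi_k\<\nabla u,\nabla\varphi_k\>\dv_f=0,
\]
which is useless. Instead I would use the test function $\varphi_k$ itself and the Cauchy--Schwarz inequality in the form
\[
\Big|\int_M\<\nabla u,\nabla\varphi_k\>\dv_f\Big|\leq\|\nabla u\|_{L^2}\|\nabla\varphi_k\|_{L^2}\to 0,
\]
combined with truncations of $u$. Set $u_T=\max(-T,\min(u,T))$ for $T>0$. Testing $\Delta_f u=0$ against $\varphi_k^2 u_T$ gives
\[
\int_M\varphi_k^2\<\nabla u,\nabla u_T\>\dv_f=-2\int_M\varphi_k u_T\<\nabla u,\nabla\varphi_k\>\dv_f .
\]
The right-hand side is bounded in absolute value by
\[
2T\,\|\nabla u\|_{L^2(\dv_f)}\,\|\nabla\varphi_k\|_{L^2(\dv_f)}\;\to\;0 \qquad (k\to\infty),
\]
since $\|\nabla u\|_{L^2(\dv_f)}<\infty$ by assumption. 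The left-hand side equals $\int_M\varphi_k^2|\nabla u|^2\chi_{\{|u|<T\}}\dv_f$, which is at least $\int_K|\nabla u|^2\chi_{\{|u|<T\}}\dv_f$ because $\varphi_k\equiv 1$ on $K$. Letting $k\to\infty$, we get $\nabla u=0$ almost everywhere on $K\cap\{|u|<T\}$.

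\emph{Step 4: conclusion.} Let $K$ exhaust $M$ and let $T\to\infty$. Then $\nabla u\equiv 0$, so $u$ is constant, contradicting the assumption.

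The step I expect to be the main obstacle is the unboundedness of $u$ in Step 2. The truncation $u_T$ in Step 3 is what resolves it: it makes the cross term controllable by $T\,\|\nabla u\|_{L^2}$, so the vanishing of $\|\nabla\varphi_k\|_{L^2}$ forces it to zero. The remaining check is that the chain rule and the integration by parts are valid for the Lipschitz function $\varphi_k^2u_T$. This is standard, since $\varphi_k^2u_T\in W^{1,2}$ with compact support.
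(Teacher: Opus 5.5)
Your argument is correct, but it takes a different route from the paper's.

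\textbf{The paper's route.} It goes through the Kelvin--Nevanlinna--Royden criterion. Since $u_+=\max(u,0)$ is $f$-subharmonic with $|\nabla u_+|\in L^2(\dv_f)$, the criterion is applied to the vector field $\nabla u_+$, whose $f$-divergence is a nonnegative Radon measure. This forces $\Delta_f u_+=0$. Then $u_+$ is a nonnegative $f$-harmonic function, hence constant by the Liouville definition of $f$-parabolicity. The same reasoning applies to $u_-$.

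\textbf{Your route.} You work directly with the vanishing of $\CAP_f(K)$ from Corollary~\ref{cor:equivalence} and a truncated Caccioppoli identity. Testing against $\varphi_k^2u_T$ gives
\[
\int_K|\nabla u|^2\chi_{\{|u|<T\}}\dv_f\le 2T\,\|\nabla u\|_{L^2(\dv_f)}\|\nabla\varphi_k\|_{L^2(\dv_f)}\to0 .
\]
The truncation is exactly what neutralizes the possible unboundedness of $u$ that blocks the naive Young-inequality approach.

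\textbf{Comparison.} Your route is more elementary and self-contained. It never uses measure-valued divergences, the subharmonicity of $u_\pm$, or the Liouville property in its superharmonic form. It needs only the capacity characterization of $f$-parabolicity, and in effect it re-proves inline the special case of the KNR criterion that is needed. The paper's route is shorter once the KNR lemma is available, and it packages the argument through a general, reusable criterion.

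\textbf{Minor point.} You should delete the false start at the beginning of Step 3 (testing against $\varphi_k$ alone), since it plays no role in the final argument.
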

We recall the argument for the sake of completeness. It is based on the following parabolicity criterion, see \cite[Theorem 2.19]{PS-Ensaios}. We note that the proof remains valid when $\div X$ is a Radon measure.
\begin{lemma}[Kelvin-Nevanlinna-Royden]
The weighted manifold $M_f$ is $f$-parabolic if and only if the following holds. Let $X \in L^2(\dv_f)$ be a vector field whose $f$-divergence $\div_f X$ is a signed Radon measure whose negative variation $(\div_f X)_{-}$ has finite total mass. Then 
    \[
    \int_M \div_f X \dv_f =0
    \]
    in the sense that the total mass of $ \div_f X$ is zero.
\end{lemma}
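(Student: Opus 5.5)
The statement is the weighted Kelvin--Nevanlinna--Royden criterion, and I would prove the two implications separately. The forward direction uses vanishing capacity through cut-off functions, together with Corollary~\ref{cor:equivalence}, i.e. $\CAP_f(K)=0$ for every precompact $K$. The converse uses an equilibrium potential as the counterexample vector field.

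\emph{$f$-parabolic $\Rightarrow$ zero flux.} Fix a precompact exhaustion $K_1\Subset K_2\Subset\cdots$ of $M$. By Corollary~\ref{cor:equivalence}, for each $n$ there is $\varphi\in C^\infty_c(M)$ with $\varphi\ge 1$ near $K_n$ and $\int|\nabla\varphi|^2\dv_f<1/n^2$. Set $\varphi_n=\min\{1,\varphi_+\}$. Then $\varphi_n$ is Lipschitz with compact support, $0\le\varphi_n\le 1$, $\varphi_n\equiv 1$ on $K_n$, and $\|\nabla\varphi_n\|_{L^2(\dv_f)}<1/n$; in particular $\varphi_n\to1$ pointwise. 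Since $\varphi_n$ is continuous with compact support, it can be paired with the Radon measure $\div_f X$, and the distributional definition of $\div_f X$ extends to Lipschitz compactly supported tests by mollification. This gives
\[
\int_M\varphi_n\,d(\div_f X)_+-\int_M\varphi_n\,d(\div_f X)_-=-\int_M\langle X,\nabla\varphi_n\rangle\dv_f .
\]
By Cauchy--Schwarz the right-hand side is bounded by $\|X\|_{L^2}/n\to0$. The negative variation has finite mass and $\varphi_n\le1$, so $\int\varphi_n\,d(\div_fX)_+\le(\div_fX)_-(M)+o(1)$. Fatou's lemma then shows that $(\div_fX)_+$ also has finite mass. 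Now dominated convergence applies to both variations, and letting $n\to\infty$ yields total mass $0$.

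\emph{Not $f$-parabolic $\Rightarrow$ the flux property fails.} Fix a smooth precompact set $K$. By Corollary~\ref{cor:equivalence}, $\CAP_f(K)>0$. Let $w_n$ be the capacitary potentials of the condensers $(K,\Omega_n)$ along a smooth exhaustion: $w_n$ is $f$-harmonic in $\Omega_n\setminus K$, $w_n=1$ on $K$, and $w_n=0$ on $\partial\Omega_n$. By the maximum principle the $w_n$ increase to some $w$ with $0<w\le1$. Moreover $\int|\nabla w_n|^2\dv_f=\CAP_f(K,\Omega_n)\downarrow\CAP_f(K)$, so $\nabla w_n\to\nabla w$ weakly in $L^2$, and in fact strongly by the usual Dirichlet-principle argument. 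Take $X=\nabla w\in L^2(\dv_f)$. Then $\div_fX=\Delta_fw$ is a nonpositive measure concentrated on $\partial K$, so its positive part vanishes. To compute its total mass, test against $\varphi\in C^\infty_c(M)$ with $\varphi\equiv1$ near $K$ and use Green's formula for $w_n$:
\[
\int_M\varphi\,d(\Delta_fw_n)=-\int_M\langle\nabla w_n,\nabla\varphi\rangle\dv_f=-\int_M\langle\nabla w_n,\nabla w_n\rangle\dv_f+\text{error}.
\]
Here the error term is controlled by choosing $\varphi$ close to $w$ in energy. Passing to the limit gives total mass $-\CAP_f(K)\ne0$, which contradicts the flux property.

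The main obstacle is this last limit. One must identify the total mass of $\Delta_f w$ on $\partial K$ with $-\CAP_f(K)$, which needs strong $L^2$-convergence of $\nabla w_n$ and a check that no mass escapes to infinity. I would handle it by computing the mass as the flux through $\partial K$, using smooth level sets $\{w_n=t\}$ near $\partial K$, together with the identity $\int_{\partial K}|\nabla w_n|\,d\H^{m-1}_f=\CAP_f(K,\Omega_n)$. A more robust route is to replace $w$ by the Green potential $G_f\psi$ truncated at level $1$, whose energy is finite and whose Laplacian has explicit mass $-\int\psi\dv_f$ exactly as in the proof of Theorem~\ref{thm_equiv_Xparab}.
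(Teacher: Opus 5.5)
Your proof is correct. The paper does not actually prove this lemma. It quotes \cite[Theorem 2.19]{PS-Ensaios} and only asserts that the argument survives when $\div_f X$ is a Radon measure. Your proposal carries out that verification along the standard lines: vanishing $f$-capacity (Corollary~\ref{cor:equivalence}) for necessity, and an equilibrium potential as the counterexample for sufficiency.

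Compared with the bare citation, your version actually checks the measure-valued extension. In the forward direction the only new points are pairing $\div_f X$ with the Lipschitz truncations $\min\{1,\varphi_+\}$ and the Fatou step showing that $(\div_f X)_+$ has finite mass. In the converse, the measure-valued formulation is exactly what lets you take $X=\nabla w$ directly. Then $\div_f X=\Delta_f w$ is a nonpositive measure on $\partial K$, with no smoothing near $\partial K$ as an $L^1_{\mathrm{loc}}$ formulation would require.

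Two small remarks. First, Corollary~\ref{cor:equivalence} only gives $\CAP_f(\bar K_0)>0$ for \emph{some} precompact $K_0$. You should therefore take your smooth $K\supset\bar K_0$, and monotonicity then gives $\CAP_f(K)>0$.

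Second, the step you call the main obstacle is easier than you fear. Since $\supp \Delta_f w\subset\partial K$ is compact, no mass can escape to infinity. Pair with the Lipschitz, compactly supported test $w_n$, which equals $1$ on $\partial K$; you already justified Lipschitz tests by mollification. For every $n$ this gives
\[
\text{total mass of }\Delta_f w=\int_M w_n\,d(\Delta_f w)=-\int_M\langle\nabla w,\nabla w_n\rangle\dv_f\longrightarrow-\int_M|\nabla w|^2\dv_f=-\CAP_f(K).
\]
The limit only needs weak convergence; your strong convergence is used in the last equality.

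Alternatively, suppose the total mass vanished. Then the nonpositive measure $\Delta_f w$ would be zero, so $w$ would be $f$-harmonic on $M$ and would attain its maximum $1$ at points of $K$. Hence $w\equiv 1$, contradicting $\int_M|\nabla w|^2\dv_f=\CAP_f(K)>0$.
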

\begin{proof}[Proof (of Proposition \ref{prop:L2gradient})]
Let $\Delta_f u =0$ with $|\nabla u| \in L^2(\dv_f)$. Then, $u_+ = \max(u,0)$ satisfies $\Delta_f u_+ \geq 0$ and $|\nabla u_+|\in L^2(\dv_f)$. By the Kelvin-Nevanlinna-Royden criterion, $\Delta_f u_+ =0$. Since $u_+ \geq 0$, by $f$-parabolicity $u_+ \equiv const$. The same argument applied to $u_-=-  \min(u,0)$ gives $u_- \equiv const$. It follows that $u = u_+ - u_- \equiv const$, as desired.
        \end{proof}
\end{example}

\begin{example}[multiplicity of canonical measures]
It is easy to show that, even at the level of the weighted Laplacian $\Delta_f$, there could exist many canonical measures.

Take, for instance, a manifold $(M,\metric)$ with at least two non-parabolic ends $\E_1$ and $\E_2$. Then, it is standard to construct a non-constant harmonic function $f:M \to [0,1]$ satisfying $|\nabla f|\in L^2(M,\dv)$. Consider the corresponding $f$-Laplacian. As we have already observed, the Riemannian measure $\vol$ is $\Delta_f$-invariant and $\d_0(\Delta_f) <+\infty$. Thus, $\vol$ is canonical for $\Delta_f$. Since obviously $\vol_f$ is canonical, we have obtained a manifold supporting two canonical measures for the $f$-Laplacian. 
    
In general, by \cite[Proposition 1]{Gr1}, the existence of a positive, bounded, harmonic function with finite Dirichlet energy is equivalent to the existence of a smooth hypersurface $\Gamma$ dividing $M$ into two $D$-massive sets. Thus, these manifolds can be taken as examples of manifolds to produce non-uniqueness of canonical measures.
\end{example}

The next crucial result justifies the terminology we have introduced. 

\begin{theorem}\label{thm_X-parab}
Let $(M,\metric)$ be a complete Riemannian manifold endowed with a canonical measure $\mu= e^{-f}\vol$ for the $X$-Laplacian. If, for some origin $o \in M$,
    \[
    \int^{+\infty} \frac{dR}{\H^{m-1}_f (\partial B_R(o))} = +\infty,
    \]
then $M$ is both $X$-parabolic and $X_\mu$-parabolic where, we recall from Example \ref{example:X-inv}, $\Delta_{X_\mu} = \Delta_X^{\ast,\mu}$.
\end{theorem}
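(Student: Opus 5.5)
The plan is to reduce to the capacity characterization of Theorem \ref{thm_equiv_Xparab}. The volume condition together with completeness gives, by the classical weighted criterion recalled in the Introduction, that $M_f$ is $f$-parabolic; hence $\CAP_f(\bar K)=0$ for every precompact $K$ by Corollary \ref{cor:equivalence}. More concretely, for fixed $r_0$ with $K\subset B_{r_0}(o)$, the radial functions
\[
\varphi_R(x)=\min\Bigl\{1,\max\Bigl\{0,\ \frac{\int_{\rho(x)}^{R}dt/\H^{m-1}_f(\partial B_t)}{\int_{r_0}^{R}dt/\H^{m-1}_f(\partial B_t)}\Bigr\}\Bigr\}
\]
satisfy $\int|\nabla\varphi_R|^2\dv_f=\bigl(\int_{r_0}^R dt/\H^{m-1}_f(\partial B_t)\bigr)^{-1}\to 0$. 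The goal is then to prove $\CAP_{X,\mu}(\bar K)=0$, which by Theorem \ref{thm_equiv_Xparab} gives $X$-parabolicity. $X_\mu$-parabolicity follows from Proposition \ref{equiv_green_Xparab}. Alternatively, one can redo the argument with $X_\mu$, since $X_\mu-\nabla f=-(X-\nabla f)$ has the same $L^2$-defect and $\mu$ is $X_\mu$-invariant.

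The key estimate comes from splitting $\Delta_X\varphi=\Delta_f\varphi-\langle X-\nabla f,\nabla\varphi\rangle$. Cauchy--Schwarz gives
\[
\int|\langle X-\nabla f,\nabla\varphi\rangle|\dv_f\le \d_f(\Delta_X)\,\|\nabla\varphi\|_{L^2(\dv_f)},
\]
which tends to $0$ along an $f$-capacity minimizing sequence. This is exactly where canonicity is used. It remains to control $\int|\Delta_f\varphi|\dv_f$. I would replace the Lipschitz competitor by the $f$-harmonic capacitary potential $u_R$ of the condenser $(\bar K, B_R)$. Then $\Delta_f u_R$ is a measure supported on $\partial K\cup\partial B_R$, and its total variation is $2\CAP_f(\bar K,B_R)$, since the flux through each boundary equals the capacity. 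This tends to $0$.

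The actual test function is obtained by smoothing. Take the $f$-harmonic potential on a slightly larger condenser $(K',B_R)$ with $K\Subset K'$, rescale it so that it is $\ge 1$ near $K$, and mollify it. I would mollify with the symmetric kernel of Section \ref{section:kernels} built from $\Delta_f$, using the $f$-Green function of $B_{R+1}$ (the case $X=\nabla f$ of that construction). That construction shows that the $L^1$ norm of $\Delta_f$ does not increase under mollification and that convergence is locally uniform. The gradient term is handled by $L^2$ convergence of gradients, using $\|\nabla(\cdot)\|_{L^2}\to$ the capacity.

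I expect the main obstacle to be this smoothing step. Simultaneously I need $\varphi\ge 1$ near $K$, $\int|\Delta_f\varphi|$ small, and $\|\nabla\varphi\|_{L^2(\dv_f)}$ small, all for a single $C^\infty_c$ function. If the kernel route is awkward near $\partial B_R$, my fallback is a Friedrichs mollification in charts on a neighborhood of the level sets. The measure $\Delta_f u_R$ lives on smooth hypersurfaces, so standard mollification keeps its total variation asymptotically unchanged and approximates gradients in $L^2$.

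With these estimates, $\CAP_{X,\mu}(\bar K)\le C\,\CAP_f(\bar K,B_R)+\d_f(\Delta_X)\,\CAP_f(\bar K,B_R)^{1/2}+o(1)\to 0$. This concludes the proof.
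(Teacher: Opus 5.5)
Your proposal is correct and follows essentially the same route as the paper. Both arguments get $f$-parabolicity from the volume condition, then build smooth compactly supported competitors with $\int|\Delta_f\varphi_j|\dv_f\to 0$ by mollifying $f$-capacitary potentials with the kernel of Section~\ref{section:kernels}, use the finite $L^2$-defect in the Cauchy--Schwarz splitting, and conclude via Theorem~\ref{thm_equiv_Xparab} and Proposition~\ref{equiv_green_Xparab} (or the symmetric argument for $X_\mu$).

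The one difference is the gradient term. You appeal to $L^2$ convergence of gradients under mollification, which Section~\ref{section:kernels} does not establish for that kernel. The paper avoids this: since the competitors are bounded ($0\le\varphi_j\le 1$, and likewise for your rescaled functions up to a constant), integration by parts gives $\int|\nabla\varphi_j|^2\dv_f=-\int\varphi_j\Delta_f\varphi_j\dv_f\le\int|\Delta_f\varphi_j|\dv_f$, so the gradient is small for free.
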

\begin{proof}
 Let $K$ be a precompact set. Fix a smooth, precompact exhaustion $U_n \uparrow M$ with $K \Subset U_1$. Since the smooth metric measure space $M_f$ is $f$-parabolic, arguing as in \cite[Lemma E.1]{BMP}, we regularize the equilibrium potentials of the $f$-capacity for the condensers $(\bar U_1,U_n)$ using the mollification described in Section \ref{section:kernels}, and deduce that there exists a sequence $\vp_j \in C^\infty_c(M)$ such that $0 \leq \vp_j \leq 1$, $\vp_j = 1$ in $\bar U_1$ and
 \[
 \int_M |\Delta_f \vp_j| \dv_f \to 0.
 \]
 Thus,
 \begin{align*}
     \CAP_{X,\mu}(K) &\leq \int_M|\Delta_X \vp_j|d\mu \\
     &= \int_M|\Delta_f \vp_j -\<X-\nabla f,\nabla \vp_j\>|\dv_f \\
     &\leq \| X - \nabla f\|_{L^2(\dv_f)} \|\nabla \vp_j\|_{L^2(\dv_f)} + \int_M |\Delta_f \vp_j|\dv_f\\
     &\leq\| X - \nabla f\|_{L^2(\dv_f)} \left(\int_M |\vp_j\Delta_f \vp_j| \dv_f \right)^{1/2} + \int_M |\Delta_f \vp_j|\dv_f\\
     &\leq \| X - \nabla f\|_{L^2(\dv_f)} \left(\int_M |\Delta_f \vp_j| \dv_f \right)^{1/2} + \int_M |\Delta_f \vp_j|\dv_f.
 \end{align*}
 By taking $j \to +\infty$, we conclude that $\CAP_{X,\mu}(K)=0$. Therefore, by Theorem \ref{thm_equiv_Xparab}, $M$ is $X$-parabolic. The $X_\mu$-parabolicity follows from Proposition \ref{equiv_green_Xparab}. Alternatively, avoiding the use of Green function characterizations of parabolicity, we can simply note that
\[
\Delta_{X_\mu} \vp = \Delta_f \vp + \<X-\nabla f, \nabla \vp \> \quad \text{with}\quad \div_f(X_\mu -\nabla f) =0.
\]
Therefore, $\mu$ is also $X_\mu$-invariant and by the exact same reasoning, it follows that $M$ is $X_\mu$-parabolic.
\end{proof}

\begin{remark}
As a scholium of the above proof, one can see that $f$-parabolicity is equivalent to the existence of a sequence of superharmonic functions $\varphi_j \in C^0_0(\Omega_j)\cap W^{1,2}(\Omega_j)$, with $\varphi_j = 1$ on $K\subset \Omega_1$, $0\leq \varphi_j  \leq 1$, and $\Delta_f \varphi_j = 0$ in the set $\Omega_j\backslash \overline{K} = \{x \colon 0<\varphi_j(x)<1\}$, such that
$$\int_{\Omega_j} \vert \nabla \varphi_j\vert^2 \dv_f \rightarrow 0 \quad \text{and} \quad \int_{\Omega_j} \vert \Delta_f \varphi_j\vert \dv_f \rightarrow 0 \quad \text{as} \ \ j \rightarrow + \infty.$$ 
\end{remark}

\begin{remark}[Growth of harmonic functions]
Let $(M,\metric )$ be any complete Riemannian manifold and pick any harmonic function  $f\not=const$ on $M$. Then $\vol_f$ is a canonical measure for $\Delta_f$. On the other hand, as we have already observed, $\vol$ is another, independent, invariant measure for this operator because $\Delta_f^{\ast,\dv}= \Delta_{-f}$. Therefore, by Proposition \ref{prop:uniqueness-Xinvariant}, the manifold is not $\Delta_{-f}$-parabolic. It follows from Theorem \ref{thm_X-parab} that
\[
\limsup_{R \to +\infty} \frac{\int_{B_R}e^{f}\dv}{R^2} = +\infty,
\]
and this is true regardless of the geometry of $(M,\metric)$ and of the choice of the non-constant harmonic function $f$.    
\end{remark}

\section{A Yau-Type $L^p$-Liouville Theorem}\label{section:Yau}

A legendary Liouville-type result by S.T. Yau in the '70s, \cite[Theorem 3]{Ya}, states that, on any complete Riemannian manifold, nonnegative subharmonic functions are necessarily constant if they are in some $L^p$-class, for $1<p<+\infty$. This follows by plugging standard cut-offs (whose existence requires completeness) into the $L^p$-Caccioppoli inequality.  Since the $f$-Laplacian is symmetric with respect to the weighted measure, the same result, with the same proof, holds for $f$-subharmonic functions on complete weighted manifolds. We are going to show how to extend this kind of conclusion to the $X$-Laplacian.\smallskip

In Definition \ref{def:L2defect} we introduced the notion of $L^2$-defect for the $X$-Laplacian to be symmetric with respect to a given invariant measure. This definition could be extended to the whole $L^p$-scale in an obvious way. Although we are merely interested in the case $p=\infty$ we put the following

\begin{definition}\label{def:Lpdefect}
    Let $d\mu=e^{-f} \dv$ be a positive invariant measure for $\Delta_X$. Given $0 <  p \leq \infty$, the $L^p$-defect for $\Delta_X$ to be symmetric with respect to $\mu$ is defined by 
    \[
    \d^p_f(\Delta_X) = \| X - \nabla f \|_{L^p(M,\dv_f)}.
    \]
\end{definition}
\begin{remark}
    As it happens for the $L^2$-defect, $\Delta_X$ is symmetric with respect to $\mu$ if and only if $\delta^p_f(\Delta_X)=0$ for some (hence any) $p \in (0,\infty]$.
\end{remark}
The Liouville theorem by Yau extends to $X$-invariant measures with finite $\infty$-defect.
\begin{theorem}\label{Yau_thm_sharp}
$\!$Let $(M,\!\metric)$ be a complete Riemannian manifold endowed with an $X\!$-invariant measure $\mu = e^{-f}\dv$, and assume that  $\delta^\infty_f (\Delta_X) < +\infty$. Then, for every $u \in C^{2}(M)$ and $1<p<\infty$,
\[
\begin{cases}
\Delta_X u \geq 0 \\
u \geq 0 \\
u \in L^{p}(M,\mu)
\end{cases}
\Longrightarrow
u \equiv const.
\]
\end{theorem}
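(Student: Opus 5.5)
The plan is to adapt Yau's Caccioppoli argument with standard cut-offs, treating the non-symmetric part of $\Delta_X$ as a divergence-free drift that can be moved onto the cut-off. Write $Z \doteq X-\nabla f$, so that $\Delta_X u=\Delta_f u-\<Z,\nabla u\>$. By \eqref{eq:X-inv}, the invariance of $\mu$ gives $\div_f Z=0$. The hypothesis $\delta^\infty_f(\Delta_X)<+\infty$ means $|Z|\le C_0$ on $M$, since $Z$ is smooth and $\vol_f$ has a positive density. By completeness, for $R>0$ we choose cut-offs $\varphi_R\in C^\infty_c(B_{2R}(o))$ with $0\le\varphi_R\le1$, $\varphi_R=1$ on $B_R(o)$ and $|\nabla\varphi_R|\le 2/R$.

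First I treat the case $p\ge2$. Multiply $\Delta_X u\ge0$ by $\varphi^2u^{p-1}$ and integrate with respect to $d\mu=\dv_f$. The $\Delta_f$ part is integrated by parts as in the weighted Yau argument, giving
\[
-(p-1)\int\varphi^2u^{p-2}|\nabla u|^2\dv_f-2\int\varphi u^{p-1}\<\nabla\varphi,\nabla u\>\dv_f .
\]
For the drift part, the key observation is that $u^{p-1}\nabla u=\frac1p\nabla(u^p)$. Since $\div_f Z=0$,
\[
-\int\varphi^2\<Z,\nabla u^p\>\frac{\dv_f}{p}=\frac2p\int u^p\varphi\<Z,\nabla\varphi\>\dv_f,
\]
which is bounded by $\frac{4C_0}{pR}\int_{B_{2R}\setminus B_R}u^p\,d\mu$. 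This involves no derivative of $u$, so Young's inequality on the cross term yields
\[
\frac{p-1}{2}\int_{B_R}u^{p-2}|\nabla u|^2d\mu\le \Big(\frac{C}{R^2}+\frac{C}{R}\Big)\int_{B_{2R}\setminus B_R}u^p\,d\mu\longrightarrow0 .
\]
Hence $u^{p-2}|\nabla u|^2\equiv0$, so $\nabla u=0$ wherever $u>0$. Since $u\ge0$ is continuous and $M$ is connected, this forces $u$ to be constant: on each component of $\{u>0\}$ it equals a positive constant, which is incompatible with continuity at a boundary point where $u=0$ unless $\{u>0\}$ is empty or all of $M$.

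The main obstacle is the range $1<p<2$, where $u^{p-2}$ is singular on the zero set of $u$. Replacing $u$ by $u+\varepsilon$ does not work, because $u+\varepsilon\notin L^p(\mu)$ when $\mu(M)=\infty$. Instead, I would use the test function $\varphi^2\big((u+\varepsilon)^{p-1}-\varepsilon^{p-1}\big)$. Its gradient produces the good term $(p-1)\varphi^2(u+\varepsilon)^{p-2}|\nabla u|^2$.

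For the drift term, note that $\big((u+\varepsilon)^{p-1}-\varepsilon^{p-1}\big)\nabla u=\nabla G_\varepsilon(u)$ with
\[
G_\varepsilon(t)=\frac{(t+\varepsilon)^p-\varepsilon^p}{p}-\varepsilon^{p-1}t .
\]
By subadditivity of $t^{p-1}$ we have $0\le G_\varepsilon(t)\le t\big((t+\varepsilon)^{p-1}-\varepsilon^{p-1}\big)\le t^p$. So the divergence-free trick again bounds the drift contribution by $\frac{C}{R}\int_{B_{2R}\setminus B_R}u^p\,d\mu$.

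For the cross term, $\big((u+\varepsilon)^{p-1}-\varepsilon^{p-1}\big)^2(u+\varepsilon)^{2-p}\le u^p$ by the same elementary inequality, so Young's inequality still closes the estimate with the $L^p(\mu)$ norm of $u$ on annuli. Letting $R\to\infty$ first and then $\varepsilon\to0$, by monotone convergence, gives $\int u^{p-2}|\nabla u|^2\chi_{\{u>0\}}\,d\mu=0$, and we conclude as before.

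I would remark at the end that $|Z|$ enters only through the annular term $\frac1R\sup_{B_{2R}\setminus B_R}|Z|$. This is what allows a sharpened growth condition on the defect in place of boundedness.
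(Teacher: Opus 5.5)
Your proof is correct and is essentially the paper's argument: Yau's Caccioppoli estimate with cut-offs, where the drift term $\langle X-\nabla f,\nabla u^p\rangle$ is moved onto the cut-off via $\div_f(X-\nabla f)=0$, leaving only the annular term $R^{-1}\sup_{B_{2R}\setminus B_R}|X-\nabla f|\int_{B_{2R}\setminus B_R}u^p\,d\mu$. The one difference is the regularization for $1<p<2$. The paper replaces $u$ by $u+\delta$ only on the support of a fixed cut-off and lets $\delta\to0$ before $R\to\infty$, so $u+\delta\notin L^p(\mu)$ is not an obstruction, contrary to your remark. Your test function $\varphi^2\big((u+\varepsilon)^{p-1}-\varepsilon^{p-1}\big)$ also works, but subadditivity alone does not give the cross-term bound; you also need $\big((u+\varepsilon)^{p-1}-\varepsilon^{p-1}\big)(u+\varepsilon)^{2-p}\le u$. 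Finally, at fixed $\varepsilon$ your estimate already forces $\nabla u\equiv0$, so the limit $\varepsilon\to0$ is unnecessary.
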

\begin{remark}
    Actually, as it will be clear from the proof, the conclusion holds provided $\sup_{B_{2R}\backslash B_R} | X - \nabla f| = O(R)$, as $R \to +\infty$.
\end{remark}
\begin{proof}
We follow the classical proof of Yau's theorem. Let $\phi \in C^\infty_c(M)$ be a test function satisfying $\phi = 1$ in $B_r$, $\supp \phi \subset B_{2r}$ and $\Vert\nabla \phi\Vert_{L^\infty} \leq C/r$. To simplify the exposition, assume also that $u>0$ on $\supp \phi$ (otherwise, one can replace $u$ with $(u^{2}+\d^{2})^{1/2}$ or $u+\d$ and let $\d \to 0^{+})$). We then compute
\begin{eqnarray*}
0 &\leq & \int_{B_{2r}} \phi^2 u^{p-1} \Delta_X u \dv_f \\[0.2cm]
&\leq & \int_{B_{2r}} \phi^2 u^{p-1} \Delta_f u \dv_f - \int_{B_{2r}} \phi^2 u^{p-1}\< X-\nabla f,\nabla u\> \dv_f \\[0.2cm]
&\leq & -(p-1)\int_{B_{2r}} \phi^2 u^{p-2}\vert \nabla u\vert^2 \dv_f - 2\int_{B_{2r}} \phi u^{p-1} \<\nabla \phi, \nabla u\> \dv_f \\[0.2cm]
& & - \frac{1}{p} \int_{B_{2r}} \<\phi^2e^{-f}( X-\nabla f),\nabla  u^{p}\> \dv.
\end{eqnarray*}
Applying Young inequality and the fact that $\mu$ is $X$-invariant so that $\div(e^{-f}(X-\nabla f)) = 0$, and rearranging we obtain
\begin{eqnarray*}
\frac{p-1}{2}\int_{B_{r}}  u^{p-2}\vert \nabla u\vert^2 \dv_f &\leq & \frac{2}{p-1} \int_{B_{2r}} u^p \vert \nabla \phi\vert^2 \dv_f + \frac{2}{p}\int_{B_{2r}} \phi u^{p}\< \nabla \phi, X- \nabla f\> \dv_f\\[0.2cm]
&\leq & \left(\frac{2C^2}{(p-1)r^2} + \frac{2C}{pr}\sup_{B_{2r}\backslash B_r}\vert X-\nabla f\vert \right)\int_{B_{2r}\backslash B_r} u^{p} \dv_f.
\end{eqnarray*}
The result follows by letting $r\rightarrow +\infty$.
\end{proof}

\begin{example}\label{sharpness_yau}
We are going to show that an $L^\infty$-defect condition cannot be avoided. To this end we consider the example introduced in \cite[Proposition 4.8]{BRS}. Let $(\mathbb{R}^m, \metric )$ be the standard Euclidean space, and let $X$ be the smooth vector field given by
$$X^i =  x_i + 2x_{\sigma(i)} e^{\frac{x_i^2 - x_{\sigma(i)}^2}{2}},$$
where $\sigma \colon \{1,\ldots,m\} \rightarrow \{1,\ldots,m\}$ is a permutation satisfying $\sigma(i) \not= i$ for all $i\in \{1,\ldots,m\}$.

The Gaussian measure $d\mu_1 = c_m e^{-\frac{\vert x\vert^2}{2}}\dv$ and the measure $d\mu_2 = u \dv$ with 
$$u(x) = d_m e^{-\frac{\vert x\vert^2}{2}}\sum_{i=1}^{m}\int_{-\infty}^{x_i} e^{-\frac{s^2}{2}}ds,$$
are both $X$-invariant probability measures.
We note that, by writing $u = e^{-f}$,
\begin{eqnarray*}
\vert X^i - \partial_i f\vert &=& \left\vert  x_i + 2x_{\sigma(i)} e^{\frac{x_i^2 - x_{\sigma(i)}^2}{2}}  - x_i + \frac{e^{-\frac{x_i^2}{2}}}{\sum_{i=1}^m \int_{-\infty}^{x_i} e^{-\frac{s^2}{2}}ds}\right\vert \\[0.2cm]
&\geq &  2x_{\sigma(i)} e^{\frac{x_i^2 - x_{\sigma(i)}^2}{2}} - \frac{e^{-\frac{x_i^2}{2}}}{\sum_{i=1}^m \int_{-\infty}^{x_i} e^{-\frac{s^2}{2}}ds}.
\end{eqnarray*}
It then follows that the growth condition on the $L^\infty$-defect is violated. In view of Doob's uniqueness result, see Theorem \ref{th-Doob} and Corollary \ref{cor-unique-XLap}, the heat semigroup $P_t$ generated by $\Delta_X$ has to be non-conservative. Therefore, combining Theorem \ref{prop-abstractexistence} with Theorem \ref{th-Doob}, we conclude that the operator $\Delta_X$ does not satisfy the $\lambda$-Liouville property, and thus, there exists a (nontrivial) bounded solution $0<v \in C^\infty(\rr^m)$ to the equation
\[
\Delta_X v = \lambda v \geq 0.
\]
Since $\mu_2(\rr^m) < +\infty$ and $v$ is bounded, for every $p \in  (1,+\infty)$ we have 
\[
\int_{\rr^m} v^p d\mu_2 < \infty.
\]
\end{example}

\section{Geometric applications}\label{section:geometry}

Let $(M,\metric )$ be a complete, $m$-dimensional Riemannian manifold endowed with a smooth vector field $X$. Having fixed an origin $o \in M$ we denote by $\rho(x)$ the intrinsic distance function from $o$. Finally, we introduce the $X$-Ricci tensor
\[
\ric_X = \ric + \frac{1}{2}L_X \metric 
\]
and, for $n>m$, its modified version
\[
\ric_{X,n} =\ric_X - \frac{1}{n-m} X^{\flat} \otimes X^{\flat}.
\]
Here, the symbol $L$ stands for the Lie-derivative of $2$-tensor fields and $\flat$ is the standard musical isomorphism. In the next result we collect known Laplacian comparison results for $\rho$ under radial lower bounds of either of these tensors.
\begin{theorem}\label{th:laplaciancomparison}
The following Laplacian comparison estimates hold true.
    \begin{itemize}
    \item [(a)] Assume that $\ric_{X,n} \geq (n-1) K(\rho)$, for some $n>m$. If $h \in C^1([0,+\infty))$ is a positive solution of
    \begin{equation}\label{solution-h}
    \begin{cases}
        h'' + Kh = 0 \ \ \text{in} \ (0,+\infty); \\
        \lim_{r\rightarrow 0} r \frac{h'}{h}(r) = 1,
    \end{cases}    
    \end{equation}
then
    \begin{equation}\label{ricXn-laplaciancomparison}
    \Delta_X \rho \leq (n-1) \frac{h'}{h}(\rho)    
    \end{equation}
    pointwise on $\ M\backslash (\cut (o) \cup \{o\})$ (see \cite{BQ}).
    
    \item [(b)] Assume that $\ric_X \geq - (m-1)F(\rho)$, for some $F \in C^0([0,+\infty))$. Then, for every $0<\delta < \min(\inj(o),1)$, it holds
    \begin{equation}\label{ricX-laplaciancomparison}
        \Delta_X\rho \leq C + (m-1)\int_\delta^\rho F(t)dt
    \end{equation}
    pointwise on $(M\backslash B_\delta(o))\backslash \cut(o)$, with $C = \max_{\partial B_{\delta}(o)} \Delta_X \rho$ (cf. \cite{CMMR}).
\end{itemize}
\end{theorem}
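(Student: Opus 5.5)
The plan is to prove both estimates by the standard Bochner–Riccati argument along radial geodesics. We then compare the resulting scalar differential inequality with the model equation. Fix $x\in M\setminus(\cut(o)\cup\{o\})$ and let $\gamma:[0,\rho(x)]\to M$ be the unit speed minimizing geodesic from $o$ to $x$. Then $\rho$ is smooth in a neighbourhood of $\gamma((0,\rho(x)])$ and $\nabla\rho=\gamma'$ there. Set
\[
\varphi(t)=\Delta_X\rho(\gamma(t))=\Delta\rho(\gamma(t))-\langle X,\gamma'(t)\rangle .
\]
Applying Bochner's formula to $\rho$, using $|\nabla\rho|=1$, gives $0=|\Hess\rho|^2+\frac{d}{dt}\Delta\rho+\ric(\gamma',\gamma')$. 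Since $\gamma$ is a geodesic, $\frac{d}{dt}\langle X,\gamma'\rangle=\langle\nabla_{\gamma'}X,\gamma'\rangle=\frac12 L_X\metric(\gamma',\gamma')$. Subtracting, we get the key identity
\[
\varphi'+|\Hess\rho|^2+\ric_X(\gamma',\gamma')=0 .
\]
Because $\Hess\rho(\gamma',\cdot)=0$, Cauchy–Schwarz on the orthogonal complement gives $|\Hess\rho|^2\ge(\Delta\rho)^2/(m-1)$.

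For (b) we simply discard the Hessian term. This gives $\varphi'\le-\ric_X(\gamma',\gamma')\le(m-1)F(t)$ for $t>0$. Since $\delta<\inj(o)$, the point $\gamma(\delta)$ lies on $\partial B_\delta(o)$, where $\rho$ is smooth, and $\varphi(\delta)\le C=\max_{\partial B_\delta(o)}\Delta_X\rho$. Integrating from $\delta$ to $\rho(x)$ gives \eqref{ricX-laplaciancomparison}.

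For (a) we keep the Hessian term and write $\Delta\rho=\varphi+\langle X,\gamma'\rangle$. We use the elementary inequality $(a+b)^2/p\ge a^2/(p+q)-b^2/q$ for $p,q>0$, with $p=m-1$ and $q=n-m$. It yields
\[
\frac{(\Delta\rho)^2}{m-1}\ge\frac{\varphi^2}{n-1}-\frac{\langle X,\gamma'\rangle^2}{n-m}.
\]
Inserting this into the key identity and using the definition of $\ric_{X,n}$ gives the Riccati inequality
\[
\varphi'+\frac{\varphi^2}{n-1}\le-(n-1)K(t).
\]
The model function $\psi=(n-1)h'/h$ is well defined because $h>0$, and it satisfies the equality $\psi'+\psi^2/(n-1)=-(n-1)K$ with $\psi(t)\sim(n-1)/t$ as $t\to0$. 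On the other hand $\Delta\rho\sim(m-1)/t$ near $o$ while $\langle X,\gamma'\rangle$ stays bounded, so $\varphi(t)\sim(m-1)/t$. Hence $w=\varphi-\psi\sim-(n-m)/t<0$ for small $t$.

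The main step, and the only delicate one, is propagating $w<0$ up to $t=\rho(x)$. Subtracting the two Riccati relations gives $w'\le-\frac{\varphi+\psi}{n-1}\,w$, whose coefficient is continuous on $(0,\rho(x)]$. Suppose $w$ became positive somewhere. Take the last zero $t_1>0$ before such a point; on the interval beyond $t_1$ where $w>0$, the inequality $w'\le a(t)w$ with $w(t_1)=0$ forces $w\le0$ by Gronwall. This is a contradiction, so $\varphi\le\psi$ at $t=\rho(x)$, which is \eqref{ricXn-laplaciancomparison}. The only care needed is the asymptotic at $t\to0$, which is handled by the normalization $\lim_{r\to0}rh'/h=1$ and by $n>m$.
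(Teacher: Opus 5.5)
Your proof is correct and complete. The paper gives no proof of its own and simply cites Bakry--Qian for (a) and CMMR for (b); your argument is the standard Bochner--Riccati proof behind those citations: the identity $\varphi'+|\Hess\rho|^2+\ric_X(\gamma',\gamma')=0$ along minimizing geodesics, the splitting $(a+b)^2/(m-1)\ge a^2/(n-1)-b^2/(n-m)$ that absorbs $\langle X,\gamma'\rangle^2$ into $\ric_{X,n}$, the comparison with $(n-1)h'/h$ via $t\,w(t)\to-(n-m)<0$ plus Gronwall from the last zero, and for (b) direct integration of $\varphi'\le(m-1)F$ starting from $\partial B_\delta(o)$.
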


In the following, we strengthen previous results in \cite{BRW}, see Example 3.3 therein, where an uncoupled condition on the Ricci curvature and the vector field was used. Moreover, we complete their statement by adding uniqueness and potential theoretical conclusions.

\begin{corollary}\label{cor_unique_meas_Ric_positive}
Let $(M,\metric )$ be a complete Riemannian manifold endowed with a smooth vector field $X$. If $\text{Ric}_X \geq (m-1)K$, for some constant $K>0$, then there exists an essentially unique invariant measure $\mu$ for the $X$-Laplacian. Moreover, $\mu(M)<+\infty$ and $M$ is both $X$-parabolic and $X_\mu$-parabolic.
\end{corollary}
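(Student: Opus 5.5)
Use the distance function $\rho$ as a Lyapunov function and feed it into the Khasminskii-type results stated earlier. By Theorem \ref{th:laplaciancomparison}(b) with $F\equiv -K<0$, i.e. $\ric_X\geq -(m-1)F$ with $F=-K$, we get, for fixed $0<\delta<\min(\inj(o),1)$,
\[
\Delta_X\rho \leq C -(m-1)K(\rho-\delta)
\]
pointwise on $(M\setminus B_\delta(o))\setminus\cut(o)$. By Calabi's trick this also holds weakly (in the barrier/distributional sense) on $M\setminus \bar B_\delta(o)$. Hence $\Delta_X\rho\to-\infty$ as $x\to\infty$ in the weak sense, and in particular $\Delta_X\rho\leq 0$ outside a compact set.

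Since $\rho\in\Lip_{loc}(M)$ and $\rho\to\infty$, Lemma \ref{lemma-parabolic} (Khasminskii test) applies directly, so $M$ is $X$-parabolic. By Theorem \ref{existence_inv_measure} there exists an $X$-invariant measure $\mu=e^{-f}\vol$. Proposition \ref{prop:uniqueness-Xinvariant} then gives essential uniqueness of $\mu$, and Proposition \ref{equiv_green_Xparab} ($1\Leftrightarrow3$) gives $X_\mu$-parabolicity.

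For $\mu(M)<+\infty$ I would invoke Khasminskii's Theorem \ref{th:invariant-Khasm}, valid for $\Delta_X$ as stated in the introduction. It requires a $C^2$ Lyapunov function with $\Delta_X\gamma\to-\infty$, and $\rho$ is not $C^2$, so this regularity issue is the main obstacle. I would first try replacing $\rho$ by $\gamma=\phi(\rho)$ with $\phi'\geq0$ and $\phi''\leq0$, noting that $\Delta_X\phi(\rho)=\phi'\Delta_X\rho+\phi''$ and that concavity preserves the inequality across the cut locus. If a genuinely $C^2$ function is still needed, I would smooth via Greene--Wu: take $\tilde\rho$ smooth with $|\tilde\rho-\rho|\leq 1$ and $|\nabla\tilde\rho|\leq 2$, and $\Delta_X\tilde\rho\leq \Delta_X\rho+1$ in the weak sense. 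Under only a lower bound on $\ric_X$ this Hessian control is delicate, so instead I would bypass smoothing with a direct argument.

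The direct argument proves finiteness via the invariance of $\mu$. Test the invariance identity $\int\Delta_X\vp\,d\mu=0$ with $\vp=\eta(\rho)$, where $\eta$ is a concave-type cutoff and $\eta'\le0$ is supported on $[R,2R]$; the non-smoothness of $\rho$ is harmless here because the weak supersolution inequality has the correct sign. This yields
\[
\int_{B_{2R}\setminus B_R}|\eta'|\,\bigl((m-1)K\rho-C'\bigr)\,d\mu\lesssim \int_{B_{2R}\setminus B_R}|\eta''|\,d\mu.
\]
Choosing instead $\vp=\min(\rho,R)$-type truncations, we obtain $\mu(\{\rho>r_0\})\lesssim \mu(\text{compact})$ uniformly in $R$, and hence $\mu(M)<+\infty$. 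This Lyapunov-via-invariance computation is the step where care is required. The alternative is to cite Theorem \ref{th:invariant-Khasm} in its Lipschitz-weak form, as in \cite{BRW}, after which uniqueness gives that the finite measure coincides with $\mu$ up to a constant.
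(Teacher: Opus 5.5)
Your proposal is correct. The parabolicity and uniqueness parts coincide with the paper's proof: $\rho$ is a Khasminskii function by Theorem~\ref{th:laplaciancomparison}(b) with $F\equiv -K$. Proposition~\ref{equiv_green_Xparab} together with the Liouville criterion (Lemma~\ref{th-unique-liouville}, equivalently Proposition~\ref{prop:uniqueness-Xinvariant}) then gives $X_\mu$-parabolicity and uniqueness.

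Where you differ is existence and finiteness. The paper simply invokes \cite{BRW} to obtain an invariant probability measure. The $C^2$ obstruction you rightly flag for Theorem~\ref{th:invariant-Khasm} is thus delegated to that reference, and this is your stated fallback. Your main route takes existence from Theorem~\ref{existence_inv_measure} and proves $\mu(M)<+\infty$ directly from invariance. In substance this is the paper's later proof of Theorem~\ref{th:volumecomparison}(B).

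To make your sketch precise: the wide decreasing cutoff on $[R,2R]$ in your first display only compares $\mu$ on nested annuli and is not what gives finiteness. The effective test function is your ``$\min(\rho,R)$-type truncation'', namely $\eta(\rho)-\eta(R+1)$, where $\eta$ is smooth and nondecreasing with $\eta'=0$ on $[0,\delta]$, $\eta'=1$ on $[2\delta,R]$ and $\eta'=0$ on $[R+1,\infty)$.
\begin{itemize}
\item Because $\eta$ is nondecreasing, Calabi's upper barriers for $\rho$ compose to give $\Delta_X\eta(\rho)\le \eta'(\rho)G(\rho)+\eta''(\rho)$ in the barrier sense. Here $G(t)=D-(m-1)K(t-\delta)$, as in the proof of Theorem~\ref{th:volumecomparison}(B).
\item By Theorem~\ref{th:viscositydistributional}, the same inequality then holds distributionally with respect to $\mu$.
\item Pair it with a cutoff equal to $1$ near the (compact) support and use $\Delta_X^{\ast,\mu}1=0$. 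This yields $0\le\int(\eta'G+\eta'')\,d\mu$.
\item Choosing $r_0$ with $G\le -1$ on $[r_0,\infty)$, it follows that $\mu(B_R\setminus B_{r_0})$ is bounded independently of $R$.
\end{itemize}
With $\varphi=\eta'(\rho)$, this is exactly the paper's inequality $-\int\langle\nabla\rho,\nabla\varphi\rangle\,d\mu\le\int G\varphi\,d\mu$. Your formulation disposes of the drift term $X-\nabla f$ by invariance directly, rather than through the divergence theorem on Lipschitz balls of Appendix~\ref{appendix-divergenceth}.

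In short, your route is self-contained within the paper's tools and yields the stronger fact that every $X$-invariant measure is finite. The paper's route is shorter but rests on an external Lyapunov-type existence theorem.
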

\begin{proof}
Thanks to Theorem \ref{th:laplaciancomparison} item $(b)$ with $F \equiv -K$ we can apply \cite{BRW} to deduce that $M$ supports an invariant probability measure $\mu$. On the other hand, again from Theorem \ref{th:laplaciancomparison} $(b)$, the distance function meets the Khasminskii test for $X$-parabolicity. It follows from Proposition \ref{equiv_green_Xparab} that $M$ is also $X_\mu$-parabolic which, by Lemma \ref{th-unique-liouville}, gives the uniqueness of $\mu$.  
\end{proof}

\begin{example}[Shrinking Ricci Solitons]
Let $(M,\metric, X)$ be a complete shrinking Ricci soliton with soliton constant $\lambda > 0$, that is, $\ric + \frac{1}{2}L_X\metric = \lambda\metric$. Then there exists an essentially unique invariant measure for $\Delta_X$, which is a probability measure.
\end{example}

It is by now well understood how Laplacian comparison results give rise to corresponding volume comparison theorems. In the $X$-setting, volumes have to be computed with respect to invariant measures. What is striking here is that the volume comparisons hold for any choice of the $X$-invariant measure see \cite{BQ} for a discussion of this phenomenon. We point out the following

\begin{theorem}\label{th:volumecomparison}
Let $(M,\metric )$ be a complete, $m$-dimensional Riemannian manifold endowed with a smooth vector field $X$. Set $\rho(x)$ for the intrinsic distance function from a reference origin $o \in M$. 
Assume that one of the following conditions is satisfied:
\begin{itemize}
    \item [(a)] $ \ric_{X,n} \geq (n-1) K$ for some constant $K \in \rr$ and some virtual dimension $n>m$.\medskip
    \item [(b)] $ \ric_X \geq (m-1)K$ for some constant $K > 0$.
\end{itemize}
Then, correspondingly, the following volume estimates hold for every $X$-invariant measure $d\mu=e^{-f}\dv$:
\begin{itemize}
    \item [(A)] The function
    \[
    R \mapsto \frac{\H^{n-1}_f(\partial B_R(o))}{h(R)^{n-1}}
    \]
    is non-increasing for a.e. $R>1$, where $h \in C^1([0,+\infty))$ solves \eqref{solution-h}.\medskip
    
    \item [(B)] $\mu(M) < +\infty$.
\end{itemize}
\end{theorem}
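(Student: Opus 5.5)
The plan is to integrate the Laplacian comparison of Theorem~\ref{th:laplaciancomparison} against the invariant measure, using the divergence form of the invariance condition. The key identity is that for any $X$-invariant $\mu=e^{-f}\vol$, since $\div_f(X-\nabla f)=0$,
\[
\div_f\big(\nabla\rho\big)=\Delta_f\rho=\Delta_X\rho+\<X-\nabla f,\nabla\rho\>,
\]
and the extra term integrates to zero against $\dv_f$ on any ball. More precisely, I will test the distributional inequality with radial functions $\vp=\eta(\rho)$, $\eta$ nonincreasing, compactly supported, and $\eta'\le 0$. Using invariance, $\int_M\Delta_X\vp\,d\mu=0$, while $\Delta_X\vp=\eta''(\rho)+\eta'(\rho)\Delta_X\rho$ holds distributionally on $M$. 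The cut locus contributes a singular part of $\Delta\rho$ that is nonpositive, and it gets multiplied by $\eta'\le0$, so it has the good sign. Writing $A(R)=\H^{m-1}_f(\partial B_R(o))$ and using the coarea formula ($|\nabla\rho|=1$ a.e.), this should give the weak inequality
\[
-\int \eta''(R)A(R)\,dR\;\le\;\int \eta'(R)\,\Phi(R)A(R)\,dR ,
\]
where $\Phi$ is the comparison bound, i.e.\ $(n-1)h'/h$ in case (a) and $C-(m-1)K(R-\delta)$ in case (b). In distributional sense in $R$ this reads $A'\le \Phi A$.

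For (A), I integrate the differential inequality $A'\le (n-1)(h'/h)A$ to get $(A/h^{n-1})'\le0$ in $\mathcal D'((1,\infty))$. A nonnegative $L^1_{loc}$ function whose distributional derivative is $\le0$ agrees a.e.\ with a nonincreasing function, which gives monotonicity for a.e.\ $R>1$. The restriction $R>1$ (or $R>\delta$) avoids the origin, where the test functions are not smooth. In case (a) the step (j) of the introduction about $\vol_f(B_R)/V_K(R)$ then follows by the standard Gromov ratio lemma, integrating in $R$.

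For (B), I use case (b): $A'\le(C'-(m-1)K R)A$ weakly for $R>\delta$. This yields $A(R)\le A(R_0)\exp\big(C'(R-R_0)-\tfrac{(m-1)K}{2}(R^2-R_0^2)\big)$ for a.e.\ $R>R_0$, which is Gaussian decay, hence $\mu(M)=\int_0^\infty A(R)\,dR<\infty$. In case (a) with $K>0$, $h=\sin(\sqrt K r)/\sqrt K$ and $M$ is compact by the resulting Myers-type bound, so finiteness also holds there if needed. An alternative for (B) is Corollary~\ref{cor_unique_meas_Ric_positive}: the finite invariant measure it produces is unique, so every invariant measure is a multiple of it and is finite. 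I would present the direct volume argument but mention this route as a check.

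The main obstacle is justifying the weak inequality across the cut locus. I would handle it either via Calabi's trick, or by the standard fact that $\Delta\rho\le$ (pointwise smooth part) in distributions, combined with $\eta'\le0$. One also needs care that $A(R)$ is defined a.e.\ and is $L^1_{loc}$, which follows from coarea.
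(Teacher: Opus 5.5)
Your proof is correct. It reaches the key one-dimensional inequality $A'\le\Phi A$ for $A(R)=\H^{m-1}_f(\partial B_R(o))$ by a genuinely different route from the paper's.

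\textbf{The paper's route.} It upgrades the pointwise comparison of Theorem~\ref{th:laplaciancomparison} to a distributional inequality across $\cut(o)$ via Calabi's barrier argument and Ishii's viscosity-to-distribution theorem (Appendix~\ref{appendix-Ishi}). It then splits $\Delta_X^{\ast,\mu}\vp=\Delta_f\vp+\<X-\nabla f,\nabla\vp\>$. The drift term is killed by coarea plus the divergence theorem on a.e.\ Lipschitz balls (Appendix~\ref{appendix-divergenceth}). In (B) the paper must first build a comparison constant $D$ uniform in base points near $o$ so that Calabi's trick works. It then extracts $V'\le\alpha-\beta V$ for $V(R)=\mu(B_R(o))$ from a specific piecewise linear cut-off.

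\textbf{Your route.} You use three ingredients:
\begin{itemize}
\item the classical fact that $\Delta\rho$ is a Radon measure whose singular part is nonpositive and carried by $\cut(o)$;
\item the chain rule $\Delta_X\eta(\rho)=\eta''(\rho)+\eta'(\rho)\Delta_X\rho$;
\item the invariance identity $\int_M\Delta_X\vp\,d\mu=\int_M\vp\,\Delta_X^{\ast,\mu}1\,d\mu=0$, which still holds for compactly supported Lipschitz $\vp$ with $\Delta_X\vp$ a measure.
\end{itemize}
The last point deserves one written line, but that line replaces the paper's entire coarea/divergence-theorem computation for the drift term.

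\textbf{What each approach buys.} Your route avoids viscosity theory and the Lipschitz-domain divergence theorem. In case (b) it also avoids the uniform constant: the pointwise bound on $(M\setminus\bar B_\delta(o))\setminus\cut(o)$ with $C=\max_{\partial B_\delta(o)}\Delta_X\rho$ already suffices. If you instead used Calabi's trick there, you would need the paper's uniformity. Your Gronwall step also gives more than the paper: Gaussian decay of $A(R)$ rather than mere boundedness of $V$. Your alternative for (B) via Corollary~\ref{cor_unique_meas_Ric_positive} is logically sound too, though it rests on the existence result of \cite{BRW}. The paper's route, in turn, needs the comparison only in the barrier sense, without using the fine structure of the cut locus.

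\textbf{A sign slip.} Your displayed weak inequality points the wrong way. Since $\eta'\le0$ and $\Delta_X\rho\le\Phi(\rho)$ as measures,
\[
0=\int_M\Delta_X\eta(\rho)\,d\mu\ \ge\ \int_M\eta''(\rho)\,d\mu+\int_M\eta'(\rho)\Phi(\rho)\,d\mu ,
\]
that is, $-\int\eta''A\,dR\ge\int\eta'\Phi A\,dR$. With $\psi=-\eta'\ge0$ this is exactly $\<A',\psi\>\le\<\Phi A,\psi\>$. As you wrote it, with $\le$, it would give $A'\ge\Phi A$. Your stated conclusion $A'\le\Phi A$ is the correct one, so only the display needs fixing.
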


\begin{proof}
    (A) To begin with we observe that, by the very result of Calabi, \cite{Ca}, \eqref{ricXn-laplaciancomparison} with $K(\rho)\equiv K$ extends to the whole manifold in the barrier sense, hence in the viscosity sense. This implies, by a celebrated result of H. Ishii, \cite{Is}, that \eqref{ricXn-laplaciancomparison} holds in the sense of distributions with respect to the given invariant measure; see Appendix \ref{appendix-Ishi}. In summary, for every $0\leq \vp \in C^\infty_c(M)$,
    \begin{equation}\label{laplaciancomp-distr}
       \int_M \rho \Delta_X^{\ast,\mu}\varphi d\mu \leq (n-1)\int_M \varphi \frac{h'}{h}(\rho)d\mu. 
    \end{equation}
    Now recall that, since $d\mu = \dv_f = e^{-f}\dv$ is an $X$-invariant measure, then
    \[
    \div\bigl(e^{-f}(X-\nabla f)\bigr)=0
    \]
    and, therefore,
    \[
    \Delta_X^{\ast,\mu}\varphi = \Delta_f \varphi + \langle X - \nabla f, \nabla \varphi\rangle.
    \]
    Thus, we can write the left-hand side of inequality \eqref{laplaciancomp-distr} as
\begin{eqnarray*}
    \int_M \rho \Delta_X^{\ast,\mu}\varphi d\mu &=& \int_M \rho \Delta_f \varphi \dv_f  + \int_M \rho \langle X - \nabla f, \nabla \varphi \rangle \dv_f \\[0.2cm]
    &=& -\int_M \< \nabla \rho, \nabla \varphi\> \dv_f +  \int_M \rho \langle X - \nabla f, \nabla \varphi \rangle \dv_f .
\end{eqnarray*}
It follows that, for every $0\leq \varphi \in C^\infty_c(M)$
$$ -\int_M \< \nabla \rho, \nabla \varphi\> \dv_f +  \int_M \rho \langle X - \nabla f, \nabla \varphi \rangle \dv_f \leq (n-1)\int_M \varphi \frac{h'}{h}(\rho)\dv_f .$$
Clearly, by density, this inequality extends to every $0 \leq \vp \in \Lip_{loc}(M)$.

Now, using the co-area formula and the divergence theorem in a Lipschitz setting, see Appendix \ref{appendix-divergenceth}, we get, for a radial $\vp=\vp(\rho)$,
\begin{eqnarray*}
\int_M \rho \langle X - \nabla f, \nabla \varphi \rangle \dv_f &=& \int_0^\infty \int_{\partial B_\rho(o)} \varphi'(\rho)\rho \langle X - \nabla f, \nabla \rho \rangle e^{-f} d \H^{n-1} d\rho \\[0.2cm]
&=& \int_0^\infty \varphi'(\rho)\rho \int_{\partial B_\rho(o)} \langle X - \nabla f, \nabla \rho \rangle d \H^{n-1}_f d\rho \\[0.2cm]
&=& \int_0^\infty \varphi'(\rho)\rho \int_{B_\rho} \div_f( X - \nabla f) \dv_f d\rho  \\[0.2cm]
&=& 0.
\end{eqnarray*}
We have thus deduced that, for all radial cut-offs $0\leq \vp(\rho) \in \Lip_{loc}(M)$,
$$ -\int_M \< \nabla \rho, \nabla \varphi\> \dv_f  \leq (n-1)\int_M \varphi \frac{h'}{h}(\rho)\dv_f .$$
Once this inequality is obtained, the validity of the Bishop-Gromov property follows from standard arguments, see e.g. \cite[Theorem 2.14]{PRS}.\medskip

\noindent (B) Inequality \eqref{ricX-laplaciancomparison} can be written in a less precise but more stable form. First of all, let
    \[
    0 < \delta := \min\left( \frac{1}{100}\inj(\bar B_1(o)),\frac{1}{100} \right) \quad \text{and}\quad \ric\geq-(m-1)S^2 \text{ on }\bar B_1(o).
    \]
For every $z \in \bar B_{2\delta}(o)$ we clearly have $0<\delta < \inj(z)$. Moreover, having set $\rho_z=d(z,\cdot)$, by the ordinary Laplacian comparison theorem we have
\[
\Delta_X \rho_z \leq D:= (m-1) \operatorname{ctn}_{-S}(\delta)+\max_{\bar B_1(o)}|X| \quad \text{on }\partial B_\delta(z).
\]
With this preparation, it is easy to show that the inequality
\begin{equation}\label{ricX-laplaciancomparison2}
\Delta_X \rho \leq D-(m-1)K(\rho - \delta),    
\end{equation}
which is clearly valid in the pointwise sense on $(M\backslash \bar B_\delta(o)) \backslash \cut(o)$, can be extended  in the barrier sense inside the cut locus. To see this, let $x \in \cut(o)\cap (M\backslash \bar B_\delta(o))$. Fix any $0 < \varepsilon \ll 1$, choose a minimizing geodesic $\gamma_x$ from $o$ to $x$ parametrized by arclength, and set $z_\varepsilon = \gamma_x(\varepsilon) \in \partial B_\varepsilon(o)$. Since $x \not \in \cut (z_\varepsilon)$, then in a neighborhood of $x$, we have,
\begin{align*}
\Delta_X \rho_{z_\varepsilon} &\leq \max_{\partial B_\delta(z_\varepsilon)} \Delta_X \rho_{z_\varepsilon} -(m-1)K(\rho_\varepsilon-\delta)\\
&\leq D-(m-1)K(\rho-\delta)+(m-1)K\varepsilon,
\end{align*}
as required.

At this point, we know that \eqref{ricX-laplaciancomparison2} holds in the distributional sense with respect to the invariant measure $d\mu=e^{-f}\dv$. Namely, for every $0 \leq \vp \in C^\infty_c(M\backslash \bar B_\delta(o))$,
\begin{equation}\label{RicX-distributionalsolution}
    \int_{M\backslash \bar B_\delta(o)} \rho \Delta_X^{\ast\mu}\vp \, d\mu \leq \int_{M\backslash \bar B_\delta(o)} \left(D-(m-1)K(\rho - \delta)\right) \vp \, d\mu.
\end{equation}
Whence, arguing as in (A), we obtain that for all radial cut-offs $0\leq \vp(\rho) \in \Lip_{loc}(M \backslash \bar B_\d(o))$
\[
-\int_{M\backslash \bar B_\delta(o)} \< \nabla \rho, \nabla \varphi\> \dv_f  \leq \int_{M\backslash \bar B_\delta(o)} \left(D-(m-1)K(\rho - \delta)\right)  \vp(\rho) \dv_f .
\]
Note that the function $G(t) = D-(m-1)K(t - \delta)$ satisfies $G(t) \leq 0 $ for all $t \geq R_0$ where
\[
R_0 = \frac{D+(m-1)K\delta}{(m-1)K}>\delta.
\]
We plug  $\vp=\vp_\varepsilon$ into the above, where
\[
\vp_\varepsilon(t)=
\begin{cases}
    0 &\text{in} \  [0,\delta], \\
    (t-\delta) & \text{in} \ [\delta,\delta+1],\\
    1 &\text{in} \  [\delta+1,R] ,\\
    \frac{R+\varepsilon-\rho}{\varepsilon} & \text{in} \ [R,R+\varepsilon],\\
    0 &[R+\varepsilon,+\infty).
\end{cases}
\]
Letting $\varepsilon \to 0^+$ and using the co-area formula we deduce that, for all $R>R_0 +1$,
\begin{align*}
\H^{m-1}_f(\partial B_R(o)) &\leq \mu(B_{\delta +1}(o) \backslash B_\delta(o)) + \int_{B_{R_0}(o) \backslash B_\delta(o)}G(\rho)d\mu + \int_{B_R(o) \backslash B_{R_0+1}(o)} G(\rho)d\mu\\
&\leq \mu(B_{\delta +1}(o) \backslash B_\delta(o)) + \int_{B_{R_0}(o) \backslash B_\delta(o)}G(\rho)d\mu + G(R_0+1) \mu(B_R(o)\backslash B_{R_0+1}(o))\\
&=\mu(B_{\delta +1}(o) \backslash B_\delta(o)) + \int_{B_{R_0}(o) \backslash B_\delta(o)}G(\rho)d\mu -G(R_0+1)\mu(B_{R_0+1}(o)) \\ & + G(R_0+1) \vol_f(B_R(o)).
\end{align*}
This latter can be written in the form
\[
\frac{d}{dR}\vol_f(B_R(o)) \leq \alpha-\beta \vol_f(B_R(o)), \, \forall R>R_0+1
\]
with $\alpha,\beta>0$. Whence, the desired conclusion follows.
\end{proof}

It is remarkable that Theorem~\ref{th:volumecomparison} provides the finiteness of every invariant measure $\mu$ without assuming that $\mu$ is canonical. As a consequence, we have the following 

\begin{corollary}
Let $(M,\metric )$ be a complete Riemannian manifold endowed with a smooth vector field $X$ satisfying $\div X = 0$. If $\text{Ric}_X \geq (m-1)K>0$ then $\vol(M)<+\infty$.
\end{corollary}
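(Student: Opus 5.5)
The plan is to observe that when $\div X = 0$ the Riemannian measure itself is $X$-invariant, and then to apply Theorem~\ref{th:volumecomparison}(B) with the choice $f\equiv 0$.

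\textbf{Invariance of $\vol$.} By the characterization \eqref{eq:X-inv} in Example~\ref{example:X-inv}, the measure $\vol_f$ is $X$-invariant if and only if $\div_f(X-\nabla f)=0$. Taking $f\equiv 0$, this condition reads $\div X = 0$, which is exactly our hypothesis. One can also check it directly: for every $\vp\in C^\infty_c(M)$,
\[
\int_M \Delta_X\vp \dv = \int_M \Delta \vp \dv - \int_M \<X,\nabla\vp\>\dv = \int_M \vp\,\div X \dv = 0 ,
\]
by the divergence theorem. Hence $d\mu = e^{-0}\dv = \dv$ is an $X$-invariant measure with smooth positive density.

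\textbf{Conclusion.} The curvature assumption $\ric_X\geq (m-1)K$ with $K>0$ is hypothesis (b) of Theorem~\ref{th:volumecomparison}. Conclusion (B) of that theorem holds for \emph{every} $X$-invariant measure, and in particular for $\mu=\vol$. It gives $\vol(M)=\mu(M)<+\infty$.

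\textbf{Alternative route.} One could instead combine Corollary~\ref{cor_unique_meas_Ric_positive} with $\mu=\vol$. The corollary says the invariant measure is essentially unique and finite, so $\vol$ is a constant multiple of the finite invariant probability measure, and therefore $\vol(M)<+\infty$.

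There is no real obstacle here. The only point needing care is that Theorem~\ref{th:volumecomparison}(B) does not require $\mu$ to be canonical, and that completeness of $M$ is assumed, as it is in the statement.
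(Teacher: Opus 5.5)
Your proposal is correct and follows the paper's own route: with $\div X=0$ the Riemannian measure is $X$-invariant (the case $f\equiv 0$ of \eqref{eq:X-inv}), and Theorem~\ref{th:volumecomparison}(B) holds for every invariant measure, canonical or not, so it gives $\vol(M)<+\infty$. Your alternative route is also valid: Corollary~\ref{cor_unique_meas_Ric_positive} gives an essentially unique, finite invariant measure, of which $\vol$ must be a constant multiple.
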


\begin{remark}
Combining Theorem \ref{thm_X-parab} with Theorem \ref{th:volumecomparison}, we obtain an analog of Corollary \ref{cor_unique_meas_Ric_positive} without appealing to the existence of a Lyapunov function, under the additional assumption that there exists a canonical $X$-invariant measure $\mu$. Indeed, in this case, the condition $\text{Ric}_X \geq (m-1)K>0$ implies that $\mu(M)<+\infty$, while Theorem \ref{thm_X-parab} yields the parabolicity of $M$ with respect to both $X$ and $X_\mu$. This naturally raises the question of whether the existence of such a canonical invariant measure can itself be deduced from suitable geometric assumptions on the weighted manifold. We shall address this question in a forthcoming paper.
\end{remark}

\noindent{\bf Acknowledgments.} The authors are grateful to Francesco Caravenna and Franco Flandoli for their interest in this work and for drawing attention to relevant references on invariant measures. The first author would like to thank the Department of Mathematics of the Università degli Studi di Milano-Bicocca for the kind hospitality during his stay in summer 2025, where part of this work was completed. L.F.P. is partially supported by CNPq, Brazil, under grant numbers 306543/2022-2, 402563/2023-9 and 402713/2024-9. S.P. is partially supported by the Italian group INdAM-GNAMPA. A.G.S. is a member of the Italian INdAM-GNAMPA group "Equazioni Differenziali e Sistemi Dinamici".
\medskip

\noindent{\bf Declaration of AI use.} No AI tools were used in the preparation of this manuscript except for a final check for misprints and  notational inconsistencies.

\appendix

\section{A criterion of $f$-parabolicity in warped products}

We are going to address the problem of characterizing the $f$-parabolicity of a warped product manifold $M_\sigma = \mathbb{R}\times_\s \mathbb{S}^{m-1}$ endowed with the  metric $\metric  = dr \otimes dr + \sigma^2(r)g_{\ss^{m-1}}$.

\begin{proposition}\label{prof-f-parabolic-double}
Let $(M_\sigma,e^{-f(r)}vol)$ be a weighted manifold and assume that $\sigma$ and $f$ are even. Then the manifold $(M_\sigma,e^{-f}vol)$ is $f$-parabolic if and only if $I_f = \infty$.
\end{proposition}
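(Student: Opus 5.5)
The plan is to use the capacity characterization of $f$-parabolicity from Corollary~\ref{cor:equivalence}: $M_\sigma$ is $f$-parabolic if and only if $\CAP_f(K)=0$ for every precompact $K$. On $M_\sigma$ the weighted volume element is $e^{-f(r)}\sigma^{m-1}(r)\,dr\,d\theta = \omega_m^{-1}h(r)e^{-f(r)}\,dr\,d\theta$, where $d\theta$ is the standard measure on $\ss^{m-1}$. Every precompact set lies in some $K_a=[-a,a]\times\ss^{m-1}$, and capacity is monotone. It therefore suffices to compute $\CAP_f(K_a)$ for all $a>0$ and show that it vanishes exactly when $I_f=\int_0^{\infty}dr/(h e^{-f})=+\infty$. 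Since $\sigma$ and $f$ are even, the two ends $r\to\pm\infty$ contribute the same integral, so $I_f^+=I_f^-=I_f$.

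\emph{Upper bound.} For $R>a$ I would use the radial Lipschitz competitor
\[
\vp_R(r)=\frac{\int_{|r|}^{R} dt/(he^{-f})}{\int_a^R dt/(he^{-f})}\quad\text{for } a\le |r|\le R,
\]
extended by $1$ on $K_a$ and by $0$ for $|r|\ge R$. It solves the radial Euler--Lagrange equation $(he^{-f}\vp')'=0$ on each end. A direct computation gives
\[
\int|\nabla\vp_R|^2\dv_f=2\Big(\int_a^R\frac{dt}{he^{-f}}\Big)^{-1}.
\]
Lipschitz compactly supported competitors are admissible, as in the proof of Proposition~\ref{controllo_fCap_XCap}: mollify and truncate, at the cost of an arbitrarily small error. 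Hence $\CAP_f(K_a)\le 2/\int_a^\infty dt/(he^{-f})$, which is $0$ if $I_f=\infty$; divergence at infinity does not depend on $a$, since the integrand is locally bounded.

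\emph{Lower bound.} Take any $\vp\in C^\infty_c(M)$ with $\vp\ge1$ near $K_a$ and support in $[-R,R]\times\ss^{m-1}$. Fix $\theta\in\ss^{m-1}$ and consider the ray $r\mapsto(r,\theta)$ on the positive end. Since $|\nabla\vp|\ge|\partial_r\vp|$, Cauchy--Schwarz gives
\[
1\le\Big(\int_a^R|\partial_r\vp|\,dr\Big)^2\le\int_a^R|\partial_r\vp|^2he^{-f}dr\cdot\int_a^R\frac{dr}{he^{-f}}.
\]
The same holds on the negative end. Integrating in $\theta$ and dividing by $\omega_m$ yields
\[
\int|\nabla\vp|^2\dv_f\ \ge\ 2\Big(\int_a^\infty\frac{dr}{he^{-f}}\Big)^{-1}.
\]
This is strictly positive when $I_f<\infty$, so $M_\sigma$ is then non-parabolic. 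Combining the two bounds gives the equivalence, and in fact the exact value $\CAP_f(K_a)=2\big(\int_a^\infty dr/(he^{-f})\big)^{-1}$.

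The main obstacle is essentially bookkeeping: making sure the capacity criterion of Corollary~\ref{cor:equivalence} may be tested on the sets $K_a$ alone and with Lipschitz competitors. Both points follow from monotonicity of capacity and a density argument. The lower bound does not even need symmetrization, since the one-dimensional Cauchy--Schwarz argument along each ray handles non-radial $\vp$ directly.
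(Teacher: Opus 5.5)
Your proof is correct, but it takes a different route from the paper's.

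\textbf{The paper's route.} The paper argues directly from the Liouville definition and the Khasminskii test (Lemma~\ref{lemma-parabolic}). It uses the radial $f$-harmonic function $v(r)=c_1\int_0^{|r|}e^{f(t)}/h(t)\,dt+c_2$ with $c_1>0$, extended evenly and smoothed inside $[-1,1]\times\ss^{m-1}$. If $I_f=\infty$, then $v\to+\infty$ and $\Delta_f v=0$ off a compact set, which gives parabolicity. If $I_f<\infty$, then $\max\{v,c\}$ is a bounded non-constant $f$-subharmonic function, which rules parabolicity out.

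\textbf{Your route.} You instead invoke the capacity characterization (i)$\Leftrightarrow$(ii) of Corollary~\ref{cor:equivalence}. This is classical and independent of the appendix, so there is no circularity. You then compute $\CAP_f(K_a)$ exactly. The radial equilibrium potential gives the upper bound. Cauchy--Schwarz along each ray, using $|\nabla\vp|\geq|\partial_r\vp|$, gives the matching lower bound for arbitrary non-radial competitors.

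\textbf{Comparison.} The paper's argument is shorter and needs only the explicit ODE solution. Yours costs some bookkeeping (the neighborhood condition, Lipschitz competitors, exhaustion by the $K_a$), which you handle correctly. In exchange it is quantitative, giving $\CAP_f(K_a)=2\bigl(\int_a^{\infty}dr/(he^{-f})\bigr)^{-1}$. It also carries over verbatim to non-even $\sigma$ and $f$, where the two ends contribute $\bigl(\int_a^{\infty}dr/(he^{-f})\bigr)^{-1}+\bigl(\int_{-\infty}^{-a}dr/(he^{-f})\bigr)^{-1}$. In particular, a single end with convergent integral already forces non-parabolicity. That is the form actually used in Example~\ref{ex-1}, where $f$ is odd rather than even.
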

\begin{proof}
On the set $\{r>0\}$ we will consider the solution of the $f$-Laplacian equation
$$ v'' + \frac{h'(r)}{h(r)}v' - v' f' = 0.$$
It is easy to see that the solutions of the above ODE are of the form
$$ v(r) = c_1 \int_0^r \frac{e^{f(r)}}{h(r)}dr + c_2,$$
for some constants $c_1, c_2 \in \mathbb{R}$. This function can be evenly extended to the entire manifold $M_\sigma$, and mollified inside  the compact set $K \doteq [-1,1]\times \mathbb{S}^{m-1}$. Choosing $c_1>0$ one can see that $\Delta_f v = 0$ outside of $K$ and satisfies $v(x) \rightarrow \infty$ as $x \rightarrow \infty$ whenever $I_f = \infty$. Thus, by Khasminskii test, $I_f = \infty$ implies that $(M_\sigma,e^{-f(r)}vol)$ is $f$-parabolic. On the other hand, still for $c_1>0$, we have that $v$ is increasing for $r>0$ and we can consider $u = \max\{v,c\}$, for some constant $c>0$ such that the resulting function $u$ is $f$-subharmonic on $M_\sigma$. In this case, if $I_f < \infty$ then $u$ is a bounded, $f$-subharmonic function and, therefore, $(M_\sigma,e^{-f(r)}vol)$ is not $f$-parabolic.
\end{proof}



\section{The divergence theorem on metric balls}\label{appendix-divergenceth}
In this appendix, we briefly review why the divergence theorem for smooth vector fields extends to the case of metric balls in a complete Riemannian manifold $(M,\metric )$. Some further remarks in the case of continuous vector fields will be also presented.

First of all, we recall that a domain $\Omega \subseteq M$ is said to be (locally) Lipschitz if every point $x \in \partial \Omega$ has a neighborhood $U_x$ such that $U_x \cap \Omega$ is described as the subgraph of a (locally) Lipschitz function. In the smooth category, thanks to the implicit function theorem, level subsets give rise to smooth domains as soon as the boundary level set corresponds to a regular value of the function. And this, by Sard, happens for a.e. value in the co-domain. Something similar holds in the Lipschitz category: thanks to Clarke's implicit function theorem, \cite{Cl}, together with the Lipschitz version of Sard theorem for the distance function (from a submanifold) we have the following fundamental result by L. Rifford, \cite{Ri}.
\begin{theorem}
Let $(M,\metric )$ be a complete Riemannian manifold. For a fixed reference origin $o \in M$, let $\rho(x)=d(x,o)$ denote the intrinsic distance function from $o$. Then, for a.e. $R>0$ the closed metric ball $\bar B_R(o)$ is a Lipschitz domain.
\end{theorem}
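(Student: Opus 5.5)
The plan is to reduce everything to the Lipschitz regularity of the boundary sphere $\partial B_R(o)=\rho^{-1}(R)$, since by definition a Lipschitz domain is one whose boundary is locally a Lipschitz graph. For this I will use Clarke's implicit function theorem \cite{Cl}. At a point $x\in\rho^{-1}(R)$, if $0\notin\partial_C\rho(x)$, where $\partial_C$ is Clarke's generalized gradient, then the level set is locally a Lipschitz graph over a hyperplane. Moreover $\{\rho\le R\}$ lies locally on one side of it, namely the subgraph side. So the statement reduces to showing that for a.e. $R>0$ the level set $\rho^{-1}(R)$ contains no Clarke-critical point of $\rho$.

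Next I compute the Clarke gradient of $\rho$. Away from $o$, $\rho$ is locally semiconcave, and $\partial_C\rho(x)$ is the convex hull of the limiting gradients. These are exactly the unit vectors $-\dot\gamma(R)$, where $\gamma$ ranges over the minimizing unit-speed geodesics from $x$ back toward $o$ (equivalently, the initial velocities at $x$ of minimizing geodesics to $o$, with a sign). Hence $x$ is Clarke-critical exactly when $0$ lies in the convex hull of these initial directions. This is the classical notion of a critical point of the distance function in the sense of Grove--Shiohama. Off $\cut(o)$ the function $\rho$ is smooth with $|\nabla\rho|=1$, so every critical point lies in $\cut(o)$.

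The key step, and the main obstacle, is a Sard-type theorem: the set of Clarke-critical values of $\rho$ has Lebesgue measure zero. Sard's theorem fails in general for Lipschitz functions, so this is genuinely where Rifford's contribution \cite{Ri} lies. I would follow his strategy. The cut locus decomposes as in Itoh--Tanaka / Mantegazza--Mennucci into a part where $\rho$ is differentiable along suitable strata and a countably $(m-1)$-rectifiable singular part. One then shows that $\rho$ restricted to the critical set is locally the restriction of a function of class $C^{1,1}$, or at least of a function with enough regularity that a Sard theorem applies. Concretely, at critical points the semiconcavity inequality forces $|\rho(y)-\rho(x)|\le C d(x,y)^2$ along the critical set. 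A covering argument then gives $\mathcal{L}^1(\rho(\mathrm{Crit}\cap B))\le C\sum r_i^2$ with $\sum r_i\lesssim 1$, but this count is not enough by itself. Rifford refines it using the structure of proximal/limiting subdifferentials to obtain $C^{1,1}$-type Taylor control with rectifiability of dimension $m-1$, and then applies the Sard theorem for $C^{1,1}$ (or $C^{k}$ with $k$ matching dimension) functions on rectifiable sets. I expect this is the hardest part, and I would cite the argument rather than redo it.

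Once this is in place, I fix any regular value $R$, which is a.e. $R$ by the previous step. Clarke's theorem then applies at every point of the compact set $\rho^{-1}(R)$, which is compact because $M$ is complete. This gives a finite atlas of Lipschitz graph charts, so $\bar B_R(o)=\{\rho\le R\}$ is a Lipschitz domain, whose boundary is $\rho^{-1}(R)$ since $R$ is not a local maximum value. For $R<\inj(o)$ the conclusion is trivially smooth, which handles small radii.
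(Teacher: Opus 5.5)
Your proposal follows the paper's argument: the paper gives no proof beyond combining Clarke's implicit function theorem \cite{Cl} at points where $0\notin\partial_C\rho$ with a Morse--Sard theorem for the distance function, attributed to Rifford \cite{Ri}, and your reduction to Clarke-regular values is exactly that. As in the paper, everything rests on the cited Sard step, and to my knowledge the available Morse--Sard results for $\rho$ (Itoh--Tanaka, Rifford) are proved only for $\dim M\le 4$, which fits the dimension-sensitive $C^{1,1}$/rectifiability mechanism you sketch, so for general $m$ both your argument and the paper's statement depend on that citation.
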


At this point, to conclude the validity of the divergence theorem, we can appeal to classical results in Geometric Measure Theory; see e.g. \cite[Theorem 4.6]{EG}. We state it in the weighted version that is employed through the paper.

\begin{theorem}\label{th:lipdiv}
    Let $M_f = (M,\metric ,\dv_f)$ be a complete weighted Riemannian manifold with a fixed reference origin $o \in M$. Then, given a $C^1$ vector field $X$ on $M$, for a.e. $R>0$ it holds
    \begin{equation}\label{lipdivformula}
        \int_{\bar{B}_R(o)} \div_f X\, \dv_f= \int_{\partial B_R(o)} \<X,\nabla r\> \,d\H^{m-1}_f.
    \end{equation}
\end{theorem}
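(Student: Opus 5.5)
The plan is to reduce Theorem~\ref{th:lipdiv} to the classical Euclidean divergence theorem for Lipschitz domains, \cite[Theorem 4.6]{EG}, through a partition of unity. Rifford's theorem supplies the regularity of the domain. First, fix $R>0$ in the full-measure set of radii for which $\bar B_R(o)$ is a Lipschitz domain. We may also require $R$ to be a value at which the co-area formula for $\rho$ behaves well, so that $\partial B_R(o)=\{\rho=R\}$ up to an $\H^{m-1}$-null set. Since $\bar B_R(o)$ is compact by completeness, it can be covered by finitely many coordinate charts $U_1,\dots,U_N$. In each chart the boundary is the subgraph of a Lipschitz function. Choose a partition of unity $\{\chi_i\}$ subordinate to this cover and write $X=\sum_i \chi_i X$. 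Both sides of \eqref{lipdivformula} are linear in $X$, so it suffices to prove the formula for a $C^1$ field supported in a single chart.

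Next, we rewrite the weighted divergence in coordinates. We have $\div_f Y\, e^{-f}\sqrt{g} = \partial_j\bigl(e^{-f}\sqrt{g}\,Y^j\bigr)$, so the weighted Riemannian divergence becomes the Euclidean divergence of the $C^1$ field $Z^j=e^{-f}\sqrt{g}\,Y^j$. Apply the Euclidean Gauss--Green theorem for Lipschitz domains to $Z$ on the image of $B_R(o)\cap U_i$. Because the Euclidean outer normal exists $\H^{m-1}$-a.e. on a Lipschitz boundary, the boundary term is $\int \<Z,\nu_{E}\>\,d\H^{m-1}_{E}$. The usual change of variables converts this into $\int \<Y,\nu\>\, e^{-f}\,d\H^{m-1}$, where $\nu$ is the Riemannian outward unit normal. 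The conversion uses the identity $\nu_E\, d\H^{m-1}_E = \sqrt{g}\,\nu^\flat\, d\H^{m-1}$ on rectifiable hypersurfaces, which expresses the area formula. Summing over $i$ then gives the formula with $\<X,\nu\>$ in place of $\<X,\nabla r\>$.

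The main obstacle is identifying $\nu$ with $\nabla\rho$ on $\partial B_R(o)$, since $\rho$ fails to be differentiable on $\cut(o)$. The cut locus has measure zero, and by co-area $\H^{m-1}(\cut(o)\cap\{\rho=R\})=0$ for a.e. $R$; we restrict to such $R$. Now take a point $x\in\partial B_R(o)\setminus\cut(o)$. There $\rho$ is smooth with $|\nabla\rho|=1$, so near $x$ the ball is the smooth sublevel set $\{\rho<R\}$. The measure-theoretic outer normal at $x$ is therefore $\nabla\rho(x)$. It follows that $\nu=\nabla\rho$ holds $\H^{m-1}$-a.e. on $\partial B_R(o)$. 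A further point is that the topological boundary of the closed ball might be larger than the level set $\{\rho=R\}$. This does not arise here, because the sphere $\{\rho=R\}$ coincides with $\partial \bar B_R(o)$ on a length space. Finally, $\bar B_R$ and $B_R$ differ by a set of volume zero for a.e. $R$, so the left-hand side of \eqref{lipdivformula} may be taken over either.
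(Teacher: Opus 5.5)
Your proposal follows the paper's route: Rifford's theorem makes $\bar B_R(o)$ a Lipschitz domain for a.e. $R$, and the Euclidean Gauss--Green theorem for Lipschitz domains \cite[Theorem 4.6]{EG} finishes the argument. You add the localization in charts and the $\H^{m-1}$-a.e. identification of the outer normal with $\nabla\rho$ (via co-area and $\vol(\cut(o))=0$), both of which the paper leaves implicit.

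Two small slips should be fixed.

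First, the area-formula identity is $\nu^\flat\, d\H^{m-1} = \sqrt{g}\,\nu_E\, d\H^{m-1}_E$, with $\sqrt g$ on the Euclidean side. As you wrote it, the boundary term would pick up a spurious factor $g$.

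Second, the sphere $\{\rho=R\}$ need not coincide with $\partial \bar B_R(o)$ on a length space. Take $R=\pi$ and $o$ a pole on the unit round $\ss^2$: then $\bar B_\pi(o)=\ss^2$ has empty boundary, while $\{\rho=\pi\}$ is the antipode. In general $\partial\bar B_R(o)\subseteq\{\rho=R\}=\partial B_R(o)$. The inclusion is strict exactly at points of level $R$ where $\rho$ has a local maximum, so your worry runs in the wrong direction. This is harmless for a.e. $R$: the local maximum values of a continuous function on a second countable space form a countable set, and they are excluded anyway at Rifford's Clarke-regular values. Still, replace your length-space justification with this argument.
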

Finally we recall that, with the procedure introduced in \cite{RS}, the validity of \eqref{lipdivformula} can be also extended to vector fields $X\in L^1_{loc}(M)$ thanks to the weak definition of divergence and the co-area formula. In fact, in this case, the left hand side of \eqref{lipdivformula} is replaced by
\[
\lim_{\varepsilon \to 0^+}(\div_f X,  \psi_\varepsilon) = \lim_{\varepsilon \to 0^+} - \int_{M} \< X, \nabla \psi_\varepsilon\> \dv_f,
\]
where $\psi_\varepsilon$ is the Lipschitz function defined by
\[
\begin{cases}
    1 & \text{in} \ B_{R -\varepsilon}(o), \medskip \\
    \dfrac{1}{\varepsilon}(R-r(x)) & \text{in} \ B_R(o) \backslash B_{R-\varepsilon}(o) ,\medskip \\
    0 & M\backslash B_R(o).
    
\end{cases}
\]

\section{Different notions of solutions}\label{appendix-Ishi}
On a Riemannian manifold $(M,\metric )$ endowed with a smooth vector field $X$, the differential inequality
\begin{equation}\label{diffineq}
\Delta_X u \leq h,    
\end{equation}
with $h \in C^0(M)$, can be interpreted in different ways according to the regularity of $u$. We are going to recall the main notions and how they fit into a certain hierarchy. 
\begin{definition}[Weak]
    The function $u \in W^{1,2}_{loc}(M)$ satisfies \eqref{diffineq} on $M$ in the weak sense with respect to the measure $\mu = \vol_f$ if, for all $0 \leq \vp \in C^{\infty}_c(M)$,
    \[
    -\int_M\<\nabla u,\nabla \vp\> \dv_f + \int_M \div_f (X)\,  u \dv_f\leq \int_M  h\vp \dv_f.
    \]
\end{definition}
\begin{definition}[Distributional]
    The function $u \in L^1_{loc}(M)$ satisfies \eqref{diffineq} on $M$ in the distributional sense with respect to the measure $\mu=\vol_f$ if, for all $0 \leq \vp \in C^{\infty}_c(M)$,
    \[
    \int_M u \Delta_X^{\ast,\mu}\vp \, d\mu \leq \int_M h \vp \, d\mu.
    \]
\end{definition}
\begin{definition}[Barrier]
    The function $u \in \text{LSC}(M)$ satisfies \eqref{diffineq} on $M$ in the barrier sense (of Calabi, \cite{Ca}) if, for every $x_0 \in M$ and for $\varepsilon>0$, there exist a neighborhood $U_{x_0,\varepsilon}$ of $x_0$ and a function $u_{x_0,\varepsilon} \in C^2(U_{x_0,\varepsilon})$ satisfying the following conditions:\medskip
    \begin{itemize}
        \item $u(x)-u_{x_0,\varepsilon}(x) \leq u(x_0)-u_{x_0,\varepsilon}(x_0)$ in $U_{x_0,\varepsilon}$;\medskip
        \item $\Delta_Xu_{x_0,\varepsilon} \leq h+\varepsilon$ at $x_0$.
    \end{itemize}
\end{definition}
\begin{definition}[Viscosity]
    The function $u \in C^0(M)$ satisfies \eqref{diffineq} in $M$ in the viscosity sense if, for every $x_0 \in M$ the following is satisfied: for every function $u_{x_0} \in C^2(U_{x_0})$ in a neighborhood $U_{x_0}$ of $x_0$ satisfying
\[
u(x)-u_{x_0}(x) \geq u(x_0)-u_{x_0}(x_0) \text{ in } U_{x_0}
\]
we have
\[
\Delta_Xu_{x_0} \leq h \text{ at } x_0.
\]
\end{definition}

It is clear from the very definitions that pointwise solutions are solutions in all the other senses. Moreover, weak solutions are distributional solutions as a consequence of the divergence theorem, and barrier solutions are viscosity solutions. In the next result, we recall why viscosity solutions are distributional solutions with respect to any given measure $\mu=\vol_f$.

\begin{theorem}\label{th:viscositydistributional}
Let $(M,\metric )$ be a Riemannian manifold, and let $u \in C^0(M)$ be a viscosity solution of the differential inequality \eqref{diffineq}.
 Then, for any measure $d\mu = e^{-f}\dv$, the following distributional inequality
\begin{equation}\label{diffineq-distr}
    \int_M u \Delta_X^{\ast,\mu}\vp \,d\mu \leq \int_M h\vp\, d\mu,
\end{equation}
holds for every $0 \leq \vp \in C^\infty_c(M)$.
\end{theorem}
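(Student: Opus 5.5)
The plan is to reduce the statement to Ishii's theorem, that continuous viscosity solutions of a linear uniformly elliptic inequality are distributional solutions, by working locally with the adjoint relation for $\mu$. Write $\Delta_X^{\ast,\mu}\vp = e^{f}\Delta_X^{\ast}(e^{-f}\vp)$, where $\Delta_X^\ast$ is the adjoint with respect to $\vol$. Since $e^{-f}\vp$ ranges over all nonnegative test functions as $\vp$ does, \eqref{diffineq-distr} is equivalent to
\[
\int_M u\,\Delta_X^{\ast}\psi \dv \le \int_M h\,\psi\dv \quad\text{for all } 0\le\psi\in C^\infty_c(M).
\]
So the dependence on $\mu$ disappears. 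It therefore suffices to prove the statement for $\mu=\vol$.

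Next, using a partition of unity, I would localize to coordinate charts. For $0\le\psi\in C^\infty_c(M)$, write $\psi=\sum_i\chi_i\psi$ with each $\chi_i\psi\ge0$ supported in a chart. Each term is then tested separately, so I may assume $\supp\psi$ lies in a chart $U\cong\Omega\subset\rr^m$. There $\Delta_X u = a^{ij}\partial_{ij}u + b^i\partial_i u$ with smooth, uniformly elliptic $a^{ij}=g^{ij}$ and smooth $b^i$. The viscosity definition is coordinate invariant, because it is stated through $C^2$ test functions touching from below. So $u$ is a viscosity supersolution of $-(a^{ij}\partial_{ij}u+b^i\partial_iu)\ge -h$ in $\Omega$. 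The Riemannian measure becomes $\sqrt{\det g}\,dx$, which is absorbed into the test function as in the first step.

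The core step is to show that a continuous viscosity supersolution is a distributional supersolution. I would regularize by inf-convolution, $u_\varepsilon(x)=\inf_y\{u(y)+|x-y|^2/(2\varepsilon)\}$, on $\Omega'\Subset\Omega$. The function $u_\varepsilon$ is semiconcave, converges locally uniformly to $u$, and is again a viscosity supersolution of a perturbed inequality $\Delta_X u_\varepsilon\le h_\varepsilon$. Here $h_\varepsilon\to h$ locally uniformly, using the continuity of $a,b,h$ and the fact that the minimizer $y$ satisfies $|x-y|=O(\sqrt\varepsilon)$. By Alexandrov's theorem, $u_\varepsilon$ is twice differentiable a.e., and at those points the inequality holds pointwise. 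Since $u_\varepsilon$ is semiconcave, its distributional Hessian is a matrix measure whose singular part is nonpositive. Contracting with the positive definite $a^{ij}$ therefore gives $\Delta_X u_\varepsilon\le h_\varepsilon$ as distributions. Integrating by parts against $\psi$ and letting $\varepsilon\to0$ via uniform convergence yields the claim. This is precisely Ishii's argument.

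I expect the main obstacle to be this step. It requires checking that the inf-convolution preserves the supersolution property with an error that is controlled under variable coefficients $a^{ij}(x)$. The standard trick is to use the matrix inequality coming from the doubling of variables, or to cite Ishii directly for the linear case with continuous coefficients. I would first try simply invoking \cite{Is} in the chart and then reassembling the pieces with the partition of unity.
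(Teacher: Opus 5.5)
Your proposal is correct and follows essentially the paper's route. You remove the weight via $\Delta_X^{\ast,\mu}\vp=e^{f}\Delta_X^{\ast}(e^{-f}\vp)$ and absorb $e^{-f}$ and $\sqrt{\det g}$ into the test function, whereas the paper absorbs them into the operator $\sqrt{g}\,e^{-f}\Delta_X$; the two are equivalent, and from there both localize with a partition of unity and invoke Ishii's theorem in a chart. Relying on the citation rather than your inf-convolution sketch is the right call: as written, the sketch does not show $\Delta_X u_\varepsilon\le h_\varepsilon$ with $h_\varepsilon\to h$, because semiconcavity bounds the touching Hessian $D^2\phi(x_0)$ only from above, so the error $(a^{ij}(x_0)-a^{ij}(y_0))\partial_{ij}\phi(x_0)$ is not controlled by $|x_0-y_0|=O(\sqrt\varepsilon)$ and the continuity of $a^{ij}$ alone.
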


\begin{proof}
We preliminarily observe that
\begin{align*}
    e^{-f}\Delta_X^{\ast,\mu} = (e^{-f}\Delta_X)^{\ast,\vol}.
\end{align*}
Therefore \eqref{diffineq-distr} writes as
\begin{align*}
    \int_M u \, (e^{-f}\Delta_X)^{\ast,\vol}\vp \dv \leq \int_M h \vp e^{-f}\dv.
\end{align*}
Now, pick a test function $0 \leq \vp \in C^\infty_c(M)$. Without loss of generality, up to using a partition of unity, we can assume that $\supp \vp$ is contained in a coordinate domain of $M$. We are thus reduced to proving that
    \begin{align*}
    \int_{\rr^m} u \, \sqrt{g}(e^{-f}\Delta_X)^{\ast,\vol}\vp \,  \, dx \leq \int_{\rr^m} \sqrt{g} h \, \vp \, e^{-f}\, dx,
\end{align*}
with $g$ the determinant of the coefficient matrix of $\metric $. Moreover, within the same coordinate domain,
\begin{align*}
    \sqrt{g}(e^{-f}\Delta_X)^{\ast,\vol} = (\sqrt{g}e^{-f} \Delta_X)^{\ast,dx}.
\end{align*}
It follows that what we have to prove is the validity of the following inequality
\begin{equation} \label{diffineq-distr-Rn}
    \int_{\rr^m} u (\sqrt{g}e^{-f} \Delta_X)^{\ast,dx}\vp\,  dx \leq \int_{\rr^m} \sqrt{g}  e^{-f} h \, \vp \, dx.
\end{equation}
By Ishii's famous equivalence \cite[Theorem 1]{Is}, \eqref{diffineq-distr-Rn} holds true provided  the following inequality
\begin{equation}\label{diffineq-visc-Rn}
    \sqrt{g}e^{-f} \Delta_X u \leq \sqrt{g}e^{-f}h
\end{equation}
holds in the viscosity sense, and this is surely the case thanks to the fact that \eqref{diffineq} holds in the viscosity sense.
\end{proof}

\end{document}